\documentclass[12pt]{article}
\usepackage{amssymb}
\usepackage{amsfonts}
\usepackage{amsmath}
\usepackage{amsthm}
\setlength{\baselineskip}{.35in}
 \sloppy
\topmargin 0.6cm
 \headsep 1.5cm
 \textwidth165mm
  \voffset-19mm
  \hoffset-12mm
  \textheight220mm

\newcommand{\be}{\begin{equation}}
\newcommand{\ee}{\end{equation}}
\newcommand{\bea}{\begin{eqnarray}}
\newcommand{\eea}{\end{eqnarray}}
\newcommand{\ba}{\begin{array}}
\newcommand{\ea}{\end{array}}

\newcommand{\bc}{\begin{center}}
\newcommand{\ec}{\end{center}}
\newcommand{\ben}{\begin{enumerate}}
\newcommand{\een}{\end{enumerate}}
\newcommand{\bfi}{\begin{figure}}
\newcommand{\efi}{\end{figure}}

\newcommand{\bq}{\begin{quote}}
\newcommand{\eq}{\end{quote}}
\newcommand{\bqu}{\begin{quotation}}
\newcommand{\equ}{\end{quotation}}
\newenvironment{emphit}{\begin{itemize}}{\end{itemize}}
\newcommand{\bemp}{\begin{emphit}}
\newcommand{\eemp}{\end{emphit}}

\newcommand{\bt}{\begin{tabular}}
\newcommand{\et}{\end{tabular}}

\newtheorem{myth}{Theorem}[section]
\newtheorem{mylem}{Lemma}[section]
\newtheorem{mycor}{Corollary}[section]

\newtheorem{mydef}{Definition}[section]

\begin{document}
\date{}
\title{New Identities In Universal Osborn Loops \footnote{2000
Mathematics Subject Classification. Primary 20NO5 ; Secondary 08A05}
\thanks{{\bf Keywords and Phrases :} Osborn loops, universality, left universality, right universality}}
\author{T\`em\'it\d {\'o}p\d {\'e} Gb\d {\'o}l\'ah\`an Jaiy\'e\d
ol\'a$^1$\thanks{Corresponding author}\\
 \&\\
John Ol\'us\d ol\'a Ad\'en\'iran$^2$} \maketitle
\begin{abstract}
A question associated with the 2005 open problem of Michael Kinyon
(Is every Osborn loop universal?), is answered. Two nice identities
that characterize universal (left and right universal) Osborn loops
are established. Numerous new identities are established for
universal (left and right universal) Osborn loops like CC-loops,
VD-loops and universal weak inverse property loops. Particularly,
Moufang loops are discovered to obey the new identity
$[y(x^{-1}u)\cdot u^{-1}](xu)=[y(xu)\cdot u^{-1}](x^{-1}u)$
surprisingly. For the first time, new loop properties that are
weaker forms of well known loop properties like inverse property,
power associativity and diassociativity are introduced and studied
in universal (left and right universal) Osborn loops. Some of them
are found to be necessary and sufficient conditions for a universal
Osborn to be 3 power associative. For instance, four of them are
found to be new necessary and sufficient conditions for a CC-loop to
be power associative. A conjugacy closed loop is shown to be
diassociative if and only if it is power associative and has a weak
form of diassociativity.
\end{abstract}

\section{Introduction}
\paragraph{}
The isotopic invariance of varieties of quasigroups and loops
described by one or more equivalent identities, especially those
that fall in the class of Bol-Moufang type loops have been of
interest to researchers in loop theory in the recent past. These
types of identities were first named by Fenyves \cite{phd56} and
\cite{phd50} in the 1960s and later on in this $21^\textrm{st}$
century by Phillips and Vojt\v echovsk\'y \cite{phd9}, \cite{phd61}
and \cite{phd124}. Among such are Etta Falconer \cite{phd159} and
\cite{phd160} which investigated isotopy invariants in quasigroups.
Loops such as Bol loops, Moufang loops, central loops and extra
loops are the most popular loops of Bol-Moufang type whose isotopic
invariance have been considered. For an overview of the theory of
loops, readers may check \cite{phd3,phd41,phd39,phd49,phd42,phd75}.

Consider $(G,\cdot )$ and $(H,\circ )$ been two distinct groupoids
(quasigroups, loops). Let $A,B$ and $C$ be three bijective mappings,
that map $G$ onto $H$. The triple $\alpha =(A,B,C)$ is called an
\textit{isotopism} of $(G,\cdot )$ onto $(H,\circ )$ if and only if
$$xA\circ yB=(x\cdot y)C~\forall~x,y\in G.$$
So, $(H,\circ )$ is called a groupoid (quasigroup, loop)
\textit{isotope} of $(G,\cdot )$.

If $C=I$ is the identity map on $G$ so that $H=G$, then the triple
$\alpha =(A,B,I)$ is called a \textit{principal isotopism} of
$(G,\cdot )$ onto $(G,\circ )$ and $(G,\circ)$ is called a
\textit{principal isotope} of $(G,\cdot )$. Eventually, the equation
of relationship now becomes
$$x\cdot y=xA\circ yB~\forall~x,y\in G$$
which is easier to work with. But if $A=R_g$ and $B=L_f$ where
$R_x~:G\to G$, the \textit{right translation} is defined by
$yR_x=y\cdot x$ and $L_x~:G\to G$, the \textit{left translation} is
defined by $yL_x=x\cdot y$ for all $x,y\in G$, for some $f,g\in G$,
the relationship now becomes
$$x\cdot y=xR_g\circ yL_f~\forall~x,y\in G$$
or
$$x\circ y=xR_g^{-1}\cdot yL_f^{-1}~\forall~x,y\in G.$$
With this new form, the triple $\alpha =(R_g,L_f,I)$ is called an
\textit{$f,g$-principal isotopism} of $(G,\cdot )$ onto $(G,\circ
)$, $f$ and $g$ are called \textit{translation elements} of $G$ or
at times written in the pair form $(g,f)$, while $(G,\circ )$ is
called an \textit{$f,g$-principal isotope} of $(G,\cdot )$.

The last form of $\alpha$ above given rises to an important result
in the study of loop isotopes of loops.

\begin{myth}\label{1:1}(Bruck \cite{phd41})

Let $(G,\cdot )$ and $(H,\circ )$ be two distinct isotopic loops.
For some $f,g\in G$, there exists an $f,g$-principal isotope
$(G,\ast )$ of $ (G,\cdot )$ such that $(H,\circ )\cong (G,\ast )$.
\hspace{5.00cm}$\Box$
\end{myth}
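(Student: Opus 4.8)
The plan is to realize $(H,\circ)$ directly as a principal isotope of $(G,\cdot)$ by transporting its multiplication back to the set $G$ along the bijection $C$, and then to verify that the resulting principal isotope is automatically of the translation (i.e.\ $f,g$-) type. Concretely, since $(A,B,C)$ is an isotopism we have $xA\circ yB=(x\cdot y)C$ for all $x,y\in G$. I would define a new operation $\ast$ on $G$ by $x\ast y=(xC\circ yC)C^{-1}$, so that by construction $C$ is an isomorphism $(G,\ast)\to(H,\circ)$; in particular $(G,\ast)$ is a loop and $(H,\circ)\cong(G,\ast)$. The task then reduces to showing that this transported loop is in fact an $f,g$-principal isotope of $(G,\cdot)$.

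To identify $\ast$ with $\cdot$ up to bijections, I would, given $p,q\in G$, set $x=pCA^{-1}$ and $y=qCB^{-1}$, so that $xA=pC$ and $yB=qC$. Feeding these into the isotopy identity yields $pC\circ qC=(pCA^{-1}\cdot qCB^{-1})C$, and applying $C^{-1}$ gives
$$p\ast q=pCA^{-1}\cdot qCB^{-1}.$$
Thus $(G,\ast)$ is the principal isotope of $(G,\cdot)$ determined by the bijections $P=CA^{-1}$ and $Q=CB^{-1}$, i.e.\ it carries the triple $(P,Q,I)$.

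The final, and I expect most delicate, step is to show that $P$ and $Q$ are forced to be inverse translations. Since $(G,\ast)$ is a loop it has a two-sided identity $n$ (namely $n=e_HC^{-1}$, the preimage under $C$ of the identity $e_H$ of $H$). Writing out $n\ast q=q$ gives $nP\cdot qQ=q$ for every $q$, whence $qQ=(nP)\backslash q$, i.e.\ $Q=L_f^{-1}$ with $f=nP$; symmetrically $p\ast n=p$ gives $pP=p/(nQ)$, i.e.\ $P=R_g^{-1}$ with $g=nQ$. Here I would invoke that $(G,\cdot)$ is a loop, so that $L_f$ and $R_g$ are genuine bijections and these divisions make sense. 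Substituting back, $p\ast q=pR_g^{-1}\cdot qL_f^{-1}$, which is precisely the $f,g$-principal isotope in the sense fixed in the introduction. Hence $(G,\ast)$ is an $f,g$-principal isotope of $(G,\cdot)$ with $(H,\circ)\cong(G,\ast)$, as required. The main obstacle is exactly this last bookkeeping: one must check that the identity element of the transported loop produces left and right translations with a \emph{consistent} pair $(f,g)$, and it is here that the loop (rather than merely quasigroup) hypothesis on both $G$ and $H$ is essential.
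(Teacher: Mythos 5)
Your argument is correct: transporting $\circ$ back along $C$ gives $p\ast q=pCA^{-1}\cdot qCB^{-1}$, and evaluating at the identity $n=e_HC^{-1}$ of $(G,\ast)$ forces $CB^{-1}=L_f^{-1}$ and $CA^{-1}=R_g^{-1}$ with $f=nCA^{-1}$, $g=nCB^{-1}$ (so that $fg=n$), which is exactly the classical proof of Bruck's lemma (cf.\ Theorem III.2.1 of Pflugfelder). The paper itself supplies no proof --- it quotes the result from Bruck and closes it with a $\Box$ --- so there is nothing to compare beyond noting that your route is the standard one and is sound.
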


With this result, to investigate the isotopic invariance of an
isomorphic invariant property in loops, one simply needs only to
check if the property in consideration is true in all
$f,g$-principal isotopes of the loop. A property is \textit{isotopic
invariant} if whenever it holds in the domain loop i.e. $(G,\cdot )$
then it must hold in the co-domain loop i.e. $(H,\circ )$ which is
an isotope of the formal. In such a situation, the property in
consideration is said to be a \textit{universal property} hence the
loop is called a \textit{universal loop} relative to the property in
consideration as often used by Nagy and Strambach \cite{phd88} in
their algebraic and geometric study of the universality of some
types of loops. For instance, if every isotope of a "certain" loop
is a "certain" loop, then the formal is called a \textit{universal
"certain" loop}. So, we can now restate Theorem~\ref{1:1} as :

\begin{myth}\label{1:2}
Let $(G,\cdot )$ be a "certain" loop where "certain" is an
isomorphic invariant property. $(G,\cdot )$ is a universal "certain"
loop if and only if every $f,g$-principal isotope $(G,\ast )$ of $
(G,\cdot )$ has the "certain" loop property.\hspace{9.00cm}$\Box$
\end{myth}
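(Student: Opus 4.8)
The plan is to derive the biconditional directly from the definition of a \emph{universal property} combined with Bruck's Theorem~\ref{1:1}, treating the two implications separately. For the forward direction I would argue immediately from the definition: if $(G,\cdot)$ is a universal "certain" loop, then every loop isotope of $(G,\cdot)$ carries the "certain" property, and since each $f,g$-principal isotope $(G,\ast)$ is in particular an isotope of $(G,\cdot)$, it must be a "certain" loop. This implication needs nothing beyond the definition of universality.

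For the converse, suppose every $f,g$-principal isotope $(G,\ast)$ of $(G,\cdot)$ is a "certain" loop, and let $(H,\circ)$ be an arbitrary loop isotope of $(G,\cdot)$; the goal is to show that $(H,\circ)$ is "certain". The key step is to invoke Theorem~\ref{1:1}, which furnishes $f,g\in G$ and an $f,g$-principal isotope $(G,\ast)$ of $(G,\cdot)$ with $(H,\circ)\cong(G,\ast)$. By hypothesis $(G,\ast)$ is "certain", and since "certain" is an isomorphic invariant property, the isomorphism carries the property from $(G,\ast)$ back to $(H,\circ)$. As $(H,\circ)$ was arbitrary, every isotope of $(G,\cdot)$ is "certain", so $(G,\cdot)$ is universal.

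The main obstacle---though a mild one---is the transport step in the converse: Bruck's theorem delivers only an isomorphic copy of $(H,\circ)$ rather than $(H,\circ)$ itself, so it is essential that the "certain" property be assumed isomorphism-invariant in order to pull the property back along the isomorphism. The whole argument therefore amounts to chaining three ingredients in the correct order---the definition of universality, Theorem~\ref{1:1}, and the isomorphism-invariance of "certain"---with no computation required.
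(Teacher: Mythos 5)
Your proposal is correct and follows exactly the route the paper intends: the forward direction is immediate because every $f,g$-principal isotope is in particular an isotope, and the converse is Bruck's Theorem~\ref{1:1} combined with the assumed isomorphism-invariance of the "certain" property. The paper leaves the proof implicit (marking the theorem with $\Box$ after the remark that one "simply needs only to check" the property on principal isotopes), and your write-up supplies precisely that omitted argument, correctly flagging the transport-along-isomorphism step as the point where isomorphism-invariance is essential.
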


\paragraph{}
From the earlier discussions, if $(H,\circ )=(G,\cdot )$ then the
triple $\alpha =(A,B,C)$ is called an \textit{autotopism} where
$A,B,C\in SYM(G,\cdot )$, the set of all bijections on $(G,\cdot )$
called the \textit{symmetric group} of $(G,\cdot )$. Such triples
form a group $AUT(G,\cdot )$ called the \textit{autotopism group} of
$(G,\cdot )$.

\paragraph{}
Bol-Moufang type of quasigroups (loops) are not the only quasigroups
(loops) that are isomorphic invariant and whose universality have
been considered. Some others are weak inverse property loops (WIPLs)
and cross inverse property loops (CIPLs). The universality of WIPLs
and CIPLs have been addressed by Osborn \cite{phd89} and Artzy
\cite{phd30} respectively. In 1970, Basarab \cite{phd146} later
continued the work of Osborn of 1961 on universal WIPLs by studying
isotopes of WIPLs that are also WIPLs after he had studied a class
of WIPLs (\cite{phd149}) in 1967. Osborn \cite{phd89}, while
investigating the universality of WIPLs discovered that a universal
WIPL $(G,\cdot )$ obeys the identity
\begin{equation}\label{eq:1}
yx\cdot (zE_y\cdot y)=(y\cdot xz)\cdot y~\forall~x,y,z\in G
\end{equation}
where
$E_y=L_yL_{y^\lambda}=R_{y^\rho}^{-1}R_y^{-1}=L_yR_yL_y^{-1}R_y^{-1}$
and $y^\lambda$ and $y^\rho$ are respectively the left and right
inverse elements of $y$.

\paragraph{}
Eight years after Osborn's \cite{phd89} 1960 work on WIPL, in 1968,
Huthnance Jr. \cite{phd44} studied the theory of generalized Moufang
loops. He named a loop that obeys (\ref{eq:1}) a \textit{generalized
Moufang loop} and later on in the same thesis, he called them
\textit{M-loops}. On the other hand, he called a \textit{universal
WIPL} an \textit{Osborn loop} and this same definition was adopted
by Chiboka \cite{phd96}. Basarab dubbed a loop $(G,\cdot )$
satisfying the identity:
\begin{equation}\label{eq:2}
x(yz\cdot x)=(x\cdot yE_x)\cdot zx~\forall~x,y,z\in G
\end{equation}
an \textit{Osborn loop} where
$E_x=R_xR_{x^\rho}=(L_xL_{x^\lambda})^{-1}=R_xL_xR_x^{-1}L_x^{-1}$.

It will look confusing if both Basarab's and Huthnance's definitions
of an Osborn loop are both adopted because an Osborn loop of Basarab
is not necessarily a universal WIPL (Osborn loop of Huthnance). So
in this work, Huthnance's definition of an Osborn loop will be
dropped while we shall stick to that of Basarab which was actually
adopted by Kinyon \cite{phd33} and the open problem we intend to
solve is relative to Basarab's definition of an Osborn loop and not
that of Huthnance. Huthnance \cite{phd44} was able to deduce some
properties of $E_x$ relative to (\ref{eq:1}).
$E_x=E_{x^\lambda}=E_{x^\rho}$. So, since $E_x=R_xR_{x^\rho}$, then
$E_x=E_{x^\lambda}=R_{x^\lambda}R_{x}$ and
$E_x=(L_{x^\rho}L_x)^{-1}$. So, we now have two identities
equivalent to identities (\ref{eq:1}) and (\ref{eq:2}) defining an
Osborn loop.

\begin{equation}\label{eq:1.1}
\textrm{OS$_0$}~:~x(yz\cdot x)=x(yx^\lambda \cdot x)\cdot zx
\end{equation}
\begin{equation}\label{eq:1.2}
\textrm{OS$_1$}~:~x(yz\cdot x)=x(yx\cdot x^\rho )\cdot zx
\end{equation}
Although Basarab \cite{phd148} and \cite{phd170} considered
universal Osborn loops but the universality of Osborn loops was
raised as an open problem by Michael Kinyon in 2005 at a conference
tagged "Milehigh Conference on Loops, Quasigroups and
Non-associative Systems" held at the University of Denver, where he
presented a talk titled "A Survey of Osborn Loops". The present
authors have been able to find a counter example to prove that not
every Osborn loop is universal (in a different paper, submitted for
publication) thereby putting the open problem to rest. Kinyon
\cite{phd33} further raised the question concerning the problem by
asking if there exists a 'nice' identity that characterizes a
universal Osborn loop.

\paragraph{}
In this study, a question associated with the 2005 open problem of
Michael Kinyon (Is every Osborn loop universal?), is answered. Two
nice identities that characterize universal (left and right
universal) Osborn loops are established. Numerous new identities are
established for universal Osborn loops like CC-loops, VD-loops and
universal weak inverse property loops. Particularly, Moufang loops
are discovered to obey the new identity $[y(x^{-1}u)\cdot
u^{-1}](xu)=[y(xu)\cdot u^{-1}](x^{-1}u)$ surprisingly. For the
first time, new loop properties that are weaker forms of well known
loop properties like inverse property, power associativity and
diassociativity are introduced and studied in universal (left and
right universal) Osborn loops. Some of them are found to be
necessary and sufficient conditions for a universal Osborn to be 3
power associative. For instance, four of them are found to be new
necessary and sufficient conditions for a CC-loop to be power
associative. A conjugacy closed loop is shown to be diassociative if
and only if it is power associative and has a weak form of
diassociativity.

\section{Preliminaries}
Let $G$ be a non-empty set. Define a binary operation ($\cdot $) on
$G$.

If $x\cdot y\in G$ for all $x, y\in G$, then the pair $(G, \cdot )$
is called a \textit{groupoid} or \textit{Magma}.

If each of the equations:
\begin{displaymath}
a\cdot x=b\qquad\textrm{and}\qquad y\cdot a=b
\end{displaymath}
has unique solutions in $G$ for $x$ and $y$ respectively, then $(G,
\cdot )$ is called a \textit{quasigroup}.

A quasigroup is therefore an \textit{algebra} having a binary
multiplication $x\cdot y$ usually written $xy$ which satisfies the
conditions that for any $a,b$ in the quasigroup the equations
\begin{displaymath}
a\cdot x=b\qquad\textrm{and}\qquad y\cdot a=b
\end{displaymath}
have unique solutions for $x$ and $y$ lying in the quasigroup.

If there exists a unique element $e\in G$ called the
\textit{identity element} such that for all $x\in G$, $x\cdot
e=e\cdot x=x$, $(G, \cdot )$ is called a \textit{loop}. We write
$xy$ instead of $x\cdot y$, and stipulate that $\cdot$ has lower
priority than juxtaposition among factors to be multiplied. For
instance, $x\cdot yz$ stands for $x(yz)$.

It can now be seen that a groupoid $(G, \cdot )$ is a quasigroup if
it's left and right translation mappings are bijections or
permutations. Since the left and right translation mappings of a
loop are bijective, then the inverse mappings $L_x^{-1}$ and
$R_x^{-1}$ exist. Let
\begin{displaymath}
x\backslash y =yL_x^{-1}=y\mathbb{L}_x\qquad\textrm{and}\qquad
x/y=xR_y^{-1}=x\mathbb{R}_y
\end{displaymath}
and note that
\begin{displaymath}
x\backslash y =z\Longleftrightarrow x\cdot
z=y\qquad\textrm{and}\qquad x/y=z\Longleftrightarrow z\cdot y=x.
\end{displaymath}
Hence, $(G, \backslash )$ and $(G, /)$ are also quasigroups. Using
the operations ($\backslash$) and ($/$), the definition of a loop
can be stated as follows.

\begin{mydef}\label{0:1}\textrm{
A \textit{loop} $(G,\cdot ,/,\backslash ,e)$ is a set $G$ together
with three binary operations ($\cdot $), ($/$), ($\backslash$) and
one nullary operation $e$ such that
\begin{description}
\item[(i)] $x\cdot (x\backslash y)=y$, $(y/x)\cdot x=y$ for all
$x,y\in G$,
\item[(ii)] $x\backslash (x\cdot y)=y$, $(y\cdot x)/x=y$ for all
$x,y\in G$ and
\item[(iii)] $x\backslash x=y/y$ or $e\cdot x=x$ for all
$x,y\in G$.
\end{description}}
\end{mydef}
We also stipulate that ($/$) and ($\backslash$) have higher priority
than ($\cdot $) among factors to be multiplied. For instance,
$x\cdot y/z$ and $x\cdot y\backslash z$ stand for $x(y/z)$ and
$x\cdot (y\backslash z)$ respectively.

In a loop $(G,\cdot )$ with identity element $e$, the \textit{left
inverse element} of $x\in G$ is the element $xJ_\lambda
=x^\lambda\in G$ such that
\begin{displaymath}
x^\lambda\cdot x=e
\end{displaymath}
while the \textit{right inverse element} of $x\in G$ is the element
$xJ_\rho =x^\rho\in G$ such that
\begin{displaymath}
x\cdot x^\rho=e.
\end{displaymath}

\begin{mydef} A loop $(Q,\cdot )$ is called:
\begin{description}
\item[(a)] a \textit{3 power associative property loop (3-PAPL)} if and only if $xx\cdot
x=x\cdot xx$ for all $x\in Q$.
\item[(b)] a left self inverse property loop (LSIPL) if and only if
$x^\lambda\cdot xx=x$ for all $x\in Q$.
\item[(c)] a right self inverse property loop (RSIPL) if and only if
$xx\cdot x^\rho =x$ for all $x\in Q$.
\item[(d)] a self automorphic inverse property loop (SFAIPL) if and only if
$(xx)^\rho =x^\rho x^\rho$ for all $x\in Q$.
\item[(e)] a self weak inverse property loop (SWIPL) if and only if
$x\cdot (xx)^\rho =x^\rho$ for all $x\in Q$.
\item[(f)] a left ${}^1$bi-self inverse property loop (L${}^1$BSIPL) if and only if
$x^\lambda (xx\cdot x)=xx$ for all $x\in Q$.
\item[(g)] a left ${}^2$bi-self inverse property loop (L${}^2$BSIPL) if and only if
$x^\lambda (x\cdot xx)=xx$ for all $x\in Q$.
\end{description}
\end{mydef}

\begin{mydef}
Let $(Q,\cdot )$ be a loop and let $w_1(q_1,q_2,\cdots ,q_n)$ and
$w_2(q_1,q_2,\cdots ,q_n)$ be words in terms of variables
$q_1,q_2,\cdots ,q_n$ of the loop $Q$ with equal lengths $N$($N\in
\mathbb{N}$,~$N>1$) such that the variables $q_1,q_2,\cdots ,q_n$
appear in them in equal number of times. $Q$ is called a {\Large
N}$_{w_1(r_1,r_2,\cdots , r_n)=w_2(r_1,r_2,\cdots
,r_n)}^{m_1,m_2,\cdots ,m_n}$ loop if it obeys the identity
$w_1(q_1,q_2,\cdots ,q_n)=w_2(q_1,q_2,\cdots ,q_n)$ where
$m_1,m_2,\cdots ,m_n\in \mathbb{N}$ represent the number of times
the variables $q_1,q_2,\cdots ,q_n\in Q$ respectively appear in the
word $w_1$ or $w_2$ such that the mappings $q_1\mapsto
r_1,q_1\mapsto r_1,\cdots ,q_n\mapsto r_n$ are assumed,
$r_1,r_2,\cdots r_n\in \mathbb{N}$.
\end{mydef}
In this study, we shall concentrate on when $N=4$.

The identities describing the most popularly known varieties of
Osborn loops are given below.
\begin{mydef}
A loop $(Q,\cdot )$ is called:
\begin{description}
\item[(a)](Basarab \cite{phd137}) a VD-loop if and only if $(\cdot )_x=(\cdot )^{L_x^{-1}R_x}$ and ${}_x(\cdot
)=(\cdot )^{R_x^{-1}L_x}$ i.e. $R_x^{-1}L_x\in PS_\lambda (Q,\cdot
)$ with companion $c=x$ and $L_x^{-1}R_x\in PS_\rho (Q,\cdot )$ with
companion $c=x$ for all $x\in Q$ where $PS_\lambda (Q,\cdot )$ and
$PS_\rho (Q,\cdot )$ are respectively the left and right
pseudo-automorphism groups of $Q$.
\item[(b)] a Moufang loop if and only if the identity $(xy)\cdot (zx)=(x\cdot
yz)x$ holds in $Q$.
\item[(c)] a conjugacy closed loop (CC-loop) if and only if the
identities $x\cdot yz=(xy)/x\cdot xz$ and $zy\cdot x=zx\cdot
x\backslash(yx)$ hold in $Q$.
\item[(d)] a universal WIPL if and only if the identity $x(yx)^\rho=y^\rho$ or $(xy)^\lambda
x=y^\lambda$ holds in $Q$ and all its isotopes.
\end{description}
\end{mydef}
All these four varieties of Osborn loops are universal. CC-loops,
and VD-loops are G-loops. \textit{G-loops} are loops that are
isomorphic to all their loop isotopes. Kunen \cite{phd185} has
studied them.

\begin{mydef}
Let the triple $\alpha =(A,B,C)$ be an isotopism of the groupoid
$(G,\cdot )$ onto a groupoid $(H,\circ )$.
\begin{description}
\item[(a)] If $\alpha =(A,B,B)$, then the triple is
called a left isotopism and the groupoids are called left isotopes.
\item[(b)] If $\alpha =(A,B,A)$, then the triple is
called a right isotopism and the groupoids are called right
isotopes.
\item[(c)] If $\alpha =(A,I,I)$, then the triple is
called a left principal isotopism and the groupoids are called left
principal isotopes.
\item[(d)] If $\alpha =(I,B,I)$, then the triple is
called a right principal isotopism and the groupoids are called
right principal isotopes.
\end{description}
\end{mydef}
A loop is a \textit{left (right) universal "certain" loop} if and
only if all its left (right) isotopes are "certain" loops.
\begin{myth}\label{1:1.1}
Let $(G,\cdot )$ and $(H,\circ )$ be two distinct left (right)
isotopic loops with the former having an identity element $e$. For
some $g~(f)\in G$, there exists an $e,g$~($f,e$)-principal isotope
$(G,\ast )$ of $ (G,\cdot )$ such that $(H,\circ )\cong (G,\ast )$.
\end{myth}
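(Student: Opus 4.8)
The plan is to refine the proof of Bruck's Theorem~\ref{1:1} by tracking the translation elements it produces, and to exploit the fact that a left (right) isotopism forces one of those elements to collapse to the identity. Since the inverse of a left isotopism $(A,B,B)$ is again a left isotopism $(A^{-1},B^{-1},B^{-1})$ (and likewise $(A,B,A)^{-1}=(A^{-1},B^{-1},A^{-1})$ in the right case), I may assume the isotopism is oriented out of the former loop, say $(A,B,B):(G,\cdot)\ra(H,\circ)$ in the left case, so that the principal isotope we build sits on $(G,\cdot)$. Here $e$ denotes the identity of $(G,\cdot)$ and $e'$ that of $(H,\circ)$, and each component of the triple is automatically a bijection, so a homomorphism built from one of them will at once be an isomorphism.

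Treating the left case first, the defining relation is $(xy)B=xA\circ yB$ for all $x,y\in G$. I would take as the translation element $g:=e'B^{-1}\in G$, chosen precisely so that $gB=e'$. Feeding $y=g$ into the isotopism gives $(xg)B=xA\circ e'=xA$, that is $R_gB=A$, so $A=R_gB$; this is the single computation that genuinely uses that $g$ was defined through the identity of $(H,\circ)$, together with the fact that $e'$ is a right identity. The $e,g$-principal isotope $(G,\ast)$ is then the one with $x\ast y=xR_g^{-1}\cdot y$, the companion $f=e$ making $L_f^{-1}=I$ drop out. It remains to check that $B$ is the required isomorphism: expanding $(s\ast t)B=(sR_g^{-1}\cdot t)B=(sR_g^{-1})A\circ tB$ by the isotopism and substituting $A=R_gB$ collapses the first factor to $sB$, so $(s\ast t)B=sB\circ tB$. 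Hence $(H,\circ)\cong(G,\ast)$ with $(G,\ast)$ an $e,g$-principal isotope, as claimed.

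The right case is entirely dual: from a right isotopism $(A,B,A):(G,\cdot)\ra(H,\circ)$ with $(xy)A=xA\circ yB$, I would set $f:=e'A^{-1}$ so that $fA=e'$, put $x=f$ to get $(fy)A=e'\circ yB=yB$, i.e. $B=L_fA$, and verify by the mirror-image substitution that $A$ is an isomorphism of the $f,e$-principal isotope $(G,\ast)$, $x\ast y=x\cdot yL_f^{-1}$, onto $(H,\circ)$.

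The one real subtlety, and the step I expect to be the crux, is recognizing why equality of the outer two components of the isotopism ($C=B$ in the left case, $C=A$ in the right) is exactly what permits the isomorphism to be that common component while simultaneously forcing the remaining companion to be $e$. In Bruck's general argument one is left with a map of the shape $sC\circ tB$, and it is precisely the hypothesis $C=B$ that turns this into the homomorphism identity $sB\circ tB$. Once this is seen, each direction is no harder than a two-line substitution, and the distinctness hypothesis is needed only to rule out the degenerate situation.
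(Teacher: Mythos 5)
Your proof is correct and is essentially the argument the paper intends: the paper's own ``proof'' merely defers to the standard proof of Theorem~III.2.1 of Pflugfelder (i.e.\ of Theorem~\ref{1:1} here), and your computation --- choosing $g=e'B^{-1}$ so that $A=R_gB$, then checking that the common component $B$ is an isomorphism of the $e,g$-principal isotope onto $(H,\circ )$, with the dual computation $f=e'A^{-1}$, $B=L_fA$ in the right case --- is exactly that decomposition-into-principal-isotopism-plus-isomorphism argument specialized to left (right) isotopisms. No gaps.
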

\begin{proof}
The proof of this is similar to that of Theorem~III.2.1 of
\cite{phd3}.
\end{proof}

\begin{myth}\label{1:2.0.1}
Let $(G,\cdot )$ be a "certain" loop where "certain" is an
isomorphic invariant property. $(G,\cdot )$ is a left (right)
universal "certain" loop if and only if every left (right) principal
isotope $(G,\ast )$ of $ (G,\cdot )$ has the "certain" loop
property.
\end{myth}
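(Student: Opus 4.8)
The plan is to mirror the argument that upgrades Theorem~\ref{1:1} into Theorem~\ref{1:2}, but now driving it with the one-sided version Theorem~\ref{1:1.1} in place of Bruck's theorem. I will carry out the ``left'' case explicitly; the ``right'' case is obtained by the symmetric substitutions, replacing left isotopes by right isotopes and the $e,g$-principal isotope by the $f,e$-principal isotope throughout.

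The necessity direction ($\Rightarrow$) is immediate from the definitions. A left principal isotopism has the form $\alpha=(A,I,I)$, which is precisely the special case $B=I$ of a left isotopism $\alpha=(A,B,B)$. Hence every left principal isotope of $(G,\cdot)$ is in particular a left isotope. So if $(G,\cdot)$ is left universal ``certain'', then by the definition of left universality every left isotope---and therefore every left principal isotope---has the ``certain'' property.

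For sufficiency ($\Leftarrow$), suppose every left principal isotope $(G,\ast)$ of $(G,\cdot)$ has the ``certain'' property; I must show that an arbitrary left isotope $(H,\circ)$ is ``certain''. Since $(G,\cdot)$ has identity $e$, Theorem~\ref{1:1.1} supplies, for some $g\in G$, an $e,g$-principal isotope $(G,\ast)$ with $(H,\circ)\cong(G,\ast)$. The decisive observation is that this $e,g$-principal isotopism is $\alpha=(R_g,L_e,I)=(R_g,I,I)$, because $L_e=I$ when $e$ is the identity; thus $\alpha$ has the form $(A,I,I)$ and $(G,\ast)$ is in fact a left principal isotope. By hypothesis $(G,\ast)$ is ``certain'', and since ``certain'' is an isomorphic invariant property and $(H,\circ)\cong(G,\ast)$, the isotope $(H,\circ)$ is ``certain'' as well. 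As $(H,\circ)$ was an arbitrary left isotope, $(G,\cdot)$ is left universal ``certain''.

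The only point requiring care---and the one the whole argument hinges on---is verifying that the $e,g$-principal isotope produced by Theorem~\ref{1:1.1} genuinely lands in the smaller class of left principal isotopes rather than merely among the left isotopes; this is exactly where the collapse $L_e=I$ is used, and it is what makes testing the single family of left principal isotopes suffice. Everything else is bookkeeping with the definitions, and the right-handed statement follows by the mirror-image substitutions, using $R_e=I$ so that the $f,e$-principal isotopism $(R_e,L_f,I)=(I,L_f,I)$ is a right principal isotopism of the form $(I,B,I)$.
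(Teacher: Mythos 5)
Your proof is correct and follows essentially the same route as the paper, whose entire proof is the instruction ``Use Theorem~\ref{1:1.1}''; you have simply written out the details that instruction leaves implicit, in exact parallel with how Theorem~\ref{1:2} follows from Theorem~\ref{1:1}. In particular, your key observation that the $e,g$-principal isotope $(R_g,L_e,I)=(R_g,I,I)$ (respectively $(R_e,L_f,I)=(I,L_f,I)$) is a left (respectively right) principal isotopism in the sense of the paper's definition is exactly the point that makes the cited theorem applicable.
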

\begin{proof}
Use Theorem~\ref{1:1.1}.\end{proof}

\section{Main Results}
\begin{myth}\label{1:4}
A loop $(Q, \cdot ,\backslash ,/)$ is a universal Osborn loop if and
only if it obeys the identity{\footnotesize
\begin{displaymath}
\underbrace{ x\cdot u\backslash\{(yz)/v\cdot [u\backslash
(xv)]\}=(x\cdot u\backslash\{[y(u\backslash([(uv)/(u\backslash
(xv))]v))]/v\cdot [u\backslash (xv)]\})/v  \cdot
u\backslash[((uz)/v)(u\backslash (xv))]}_{\textrm{OS$_0'$}}
\end{displaymath}}
\begin{displaymath}
\textrm{or}
\end{displaymath}
\begin{displaymath}
\underbrace{ x\cdot u\backslash\{(yz)/v\cdot [u\backslash
(xv)]\}=\{x\cdot u\backslash\{[y(u\backslash (xv))]/v\cdot
[x\backslash (uv)]\}\}/v \cdot u\backslash[((uz)/v)(u\backslash
(xv))].}_{\textrm{OS$_1'$}}
\end{displaymath}
\end{myth}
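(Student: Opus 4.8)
The plan is to characterise universality through principal isotopes and then to translate the defining identity of an Osborn loop, read in a \emph{generic} principal isotope, back into the operations of $(Q,\cdot)$. Since ``Osborn loop'' is defined by an identity it is an isomorphic invariant, so Theorem~\ref{1:2} applies: $(Q,\cdot)$ is a universal Osborn loop if and only if every $f,g$-principal isotope $(Q,\ast)$, where $x\ast y=xR_g^{-1}\cdot yL_f^{-1}=(x/g)\cdot(f\backslash y)$, is itself an Osborn loop, equivalently satisfies OS$_0$ of (\ref{eq:1.1}) (and equivalently OS$_1$ of (\ref{eq:1.2})). Thus the whole statement reduces to rewriting OS$_0$ (resp. OS$_1$), interpreted in $(Q,\ast)$, purely in terms of $\cdot,\backslash,/$ and the translation elements $f,g$.

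First I would record the internal structure of $(Q,\ast)$. A direct check shows that its two-sided identity is $e_\ast=fg$, that its divisions are expressible through $\cdot,\backslash,/$, and --- the key data --- that the left and right inverses of an element $X$ are $X^{\lambda_\ast}=[(fg)/(f\backslash X)]\cdot g$ and $X^{\rho_\ast}=f\cdot[(X/g)\backslash(fg)]$. These follow by solving $X^{\lambda_\ast}\ast X=e_\ast$ and $X\ast X^{\rho_\ast}=e_\ast$ in $(Q,\ast)$, and they are exactly the ingredients that will produce the awkward subwords $(uv)/(u\backslash(xv))$ and $x\backslash(uv)$ appearing in OS$_0'$ and OS$_1'$.

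Next I would write OS$_0$ for $(Q,\ast)$ with isotope-variables $X,Y,Z$, namely $X\ast((Y\ast Z)\ast X)=[X\ast((Y\ast X^{\lambda_\ast})\ast X)]\ast(Z\ast X)$, expand every $\ast$ via $a\ast b=(a/g)\cdot(f\backslash b)$, and then apply the substitution $X=xv,\ Y=yv,\ Z=uz$ together with the renaming $u:=f,\ v:=g$. Because $X/v=x$, $Y/v=y$ and $u\backslash Z=z$ under this change, the leading factors collapse cleanly and the nested expression on each side reorganises term-by-term into the two sides of OS$_0'$; the analogous expansion of OS$_1$, using $X^{\rho_\ast}$ in place of $X^{\lambda_\ast}$, yields OS$_1'$. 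Finally, for fixed $f,g$ the maps $x\mapsto xv,\ y\mapsto yv,\ z\mapsto uz$ are bijections of $Q$, so quantifying over all $X,Y,Z$ is the same as quantifying over all $x,y,z$, while quantifying over all principal isotopes is precisely quantifying over all $u=f,\ v=g$. Hence $(Q,\ast)$ satisfies OS$_0$ (resp. OS$_1$) for every $f,g$ if and only if OS$_0'$ (resp. OS$_1'$) holds identically in $(Q,\cdot)$, which is the claimed equivalence.

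The main obstacle I anticipate is not conceptual but the bookkeeping of the last two steps: correctly deriving $X^{\lambda_\ast}$ and $X^{\rho_\ast}$ and then propagating them through three layers of $\ast$-products while respecting the precedence conventions for $/$, $\backslash$ and $\cdot$, so that the expansion lands on the exact parenthesisation displayed in OS$_0'$ and OS$_1'$ rather than on a merely equivalent rearrangement. I would control this by abbreviating $w=u\backslash(xv)$ (the image $f\backslash X$ of $X$ under $L_f^{-1}$), which recurs throughout both identities, and by expanding outward one $\ast$-product at a time.
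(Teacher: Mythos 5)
Your proposal is correct and follows essentially the same route as the paper: reduce universality to all $f,g$-principal isotopes via Theorem~\ref{1:2}, compute the isotope's identity element $uv$ and the inverse $X^{\lambda_\ast}=[(uv)/(u\backslash X)]v$ (resp. $X^{\rho_\ast}$ for OS$_1'$), expand OS$_0$ in the isotope, and renormalise the variables by $X=xv$, $Y=yv$, $Z=uz$ --- exactly the substitutions $x'=x/v$, $y'=y/v$, $z'=u\backslash z$ used in the paper. Your explicit remark that these substitutions are bijections, justifying the equivalence of the two quantifications, is a detail the paper leaves implicit but changes nothing of substance.
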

\begin{proof}Let $\mathcal{Q}=(Q, \cdot ,\backslash ,/)$ be an
Osborn loop with any arbitrary principal isotope $\mathfrak{Q}=(Q,
\vartriangle,\nwarrow ,\nearrow )$ such that
\begin{displaymath}
x\vartriangle y=xR_v^{-1}\cdot yL_u^{-1}=(x/v)\cdot (u\backslash
y)~\forall~u,v\in Q.
\end{displaymath}
If $\mathcal{Q}$ is a universal Osborn loop then, $\mathfrak{Q}$ is
an Osborn loop. $\mathfrak{Q}$ obeys identity OS$_0$ implies
\begin{equation}\label{eq:3}
x\vartriangle [(y\vartriangle z)\vartriangle x]=\{x\vartriangle
[(y\vartriangle x^{\lambda '})\vartriangle x]\}\vartriangle
(z\vartriangle x)
\end{equation}
where $x^{\lambda '}=xJ_{\lambda '}$ is the left inverse of $x$ in
$\mathfrak{Q}$. The identity element of the loop $\mathfrak{Q}$ is
$uv$. So,
\begin{displaymath}
x\vartriangle y=xR_v^{-1}\cdot yL_u^{-1}~\textrm{implies}~
y^{\lambda '}\vartriangle y=y^{\lambda '}R_v^{-1}\cdot
yL_u^{-1}=uv~\textrm{implies}
\end{displaymath}
\begin{displaymath}
y^{\lambda '}R_v^{-1}R_{yL_u^{-1}}=uv~\textrm{implies}~yJ_{\lambda
'}=(uv)R_{yL_u^{-1}}^{-1}R_v=(uv)R_{(u\backslash
y)}^{-1}R_v=[(uv)/(u\backslash y)]v.
\end{displaymath}
Thus, using the fact that
\begin{displaymath}
x\vartriangle y=(x/v)\cdot (u\backslash y),
\end{displaymath}
$\mathfrak{Q}$ is an Osborn loop if and only if{\footnotesize
\begin{displaymath}
(x/v)\cdot u\backslash\{[(y/v)\cdot (u\backslash z)]/v\cdot
(u\backslash x)\}=((x/v)\cdot
u\backslash\{[(y/v)(u\backslash([(uv)/(u\backslash x)]v))]/v\cdot
(u\backslash x)\})/v\cdot u\backslash[(z/v)(u\backslash x)].
\end{displaymath}}
Do the following replacements:
\begin{displaymath}
x'=x/v\Rightarrow x=x'v,~z'=u\backslash z\Rightarrow
z=uz',~y'=y/v\Rightarrow y=y'v
\end{displaymath}
we have{\footnotesize
\begin{displaymath}
x'\cdot u\backslash\{(y'z')/v\cdot [u\backslash (x'v)]\}=(x'\cdot
u\backslash\{[y'(u\backslash([(uv)/(u\backslash (x'v))]v))]/v\cdot
[u\backslash (x'v)]\})/v  \cdot u\backslash[((uz')/v)(u\backslash
(x'v))].
\end{displaymath}}
This is precisely identity OS$_0'$ by replacing $x'$, $y'$ and $z'$
by $x$, $y$ and $z$ respectively.

The proof of the converse is as follows. Let $\mathcal{Q}=(Q, \cdot
,\backslash ,/)$ be an Osborn loop that obeys identity OS$_0'$.
Doing the reverse process of the proof of the necessary part, it
will be observed that equation (\ref{eq:3}) is true for any
arbitrary $u,v$-principal isotope $\mathfrak{Q}=(Q, \vartriangle
,\nwarrow ,\nearrow )$ of $\mathcal{Q}$. So, every $f,g$-principal
isotope $\mathfrak{Q}$ of $\mathcal{Q}$ is an Osborn loop. Following
Theorem~\ref{1:2}, $\mathcal{Q}$ is a universal Osborn loop if and
only if $\mathfrak{Q}$ is an Osborn loop.

The proof for the second identity is done similarly by using
identity OS$_1$.\end{proof}

\begin{mylem}\label{1:8}
Let $Q$ be a loop with multiplication group
$\mathcal{M}\textrm{ult}(Q)$. $Q$ is a universal Osborn loop if and
only if the triple $\big(\alpha (x,u,v),\beta (x,u,v),\gamma
(x,u,v)\big)\in AUT(Q)$ or the triple $\Big(R_{[u\backslash
(xv)]}\mathbb{R}_vR_{[x\backslash (uv)]}\mathbb{R}_{[u\backslash
(xv)]}R_v\gamma (x,u,v)\mathbb{R}_v,\beta (x,u,v),\gamma
(x,u,v)\Big)\in AUT(Q)$ for all $x,u,v\in Q$ where $\alpha
(x,u,v)=R_{_{(u\backslash([(uv)/(u\backslash
(xv))]v))}}\mathbb{R}_vR_{[u\backslash
(xv)]}\mathbb{L}_uL_x\mathbb{R}_v,~\beta
(x,u,v)=L_u\mathbb{R}_vR_{[u\backslash (xv)]}\mathbb{L}_u$ and
$\gamma (x,u,v)=\mathbb{R}_vR_{[u\backslash (xv)]}\mathbb{L}_uL_x$
are elements of $\mathcal{M}\textrm{ult}(Q)$.
\end{mylem}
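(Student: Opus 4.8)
The plan is to reduce everything to Theorem~\ref{1:4}, which already asserts that $Q$ is a universal Osborn loop if and only if it obeys OS$_0'$ (equivalently OS$_1'$). What remains is to recognise each of these two identities as an autotopism relation, which is purely a matter of reading the nested bracketed expressions as compositions of translation maps acting on the right.

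First I would treat OS$_0'$. Recalling $y\mathbb{L}_u=u\backslash y$, $x\mathbb{R}_v=x/v$, $wR_t=w\cdot t$ and $wL_s=s\cdot w$, I would trace its three constituent expressions. Read as a function of the single block $yz$, the left-hand side $x\cdot u\backslash\{(yz)/v\cdot[u\backslash(xv)]\}$ is exactly $(yz)\gamma(x,u,v)$ with $\gamma=\mathbb{R}_vR_{[u\backslash(xv)]}\mathbb{L}_uL_x$. Read as a function of $y$, the first factor on the right is $y\alpha(x,u,v)$ with $\alpha=R_{(u\backslash([(uv)/(u\backslash(xv))]v))}\mathbb{R}_vR_{[u\backslash(xv)]}\mathbb{L}_uL_x\mathbb{R}_v$; read as a function of $z$, the second factor is $z\beta(x,u,v)$ with $\beta=L_u\mathbb{R}_vR_{[u\backslash(xv)]}\mathbb{L}_u$. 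Thus OS$_0'$ states precisely $y\alpha\cdot z\beta=(yz)\gamma$ for all $y,z$, and demanding this for every $x,u,v$ is by definition $(\alpha(x,u,v),\beta(x,u,v),\gamma(x,u,v))\in AUT(Q)$. Since this reading is an exact equivalence, combining it with Theorem~\ref{1:4} yields the first ``if and only if'' of the lemma.

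For the second triple I would run the same reading on OS$_1'$. The left and right outer factors are untouched, so $\gamma$ and $\beta$ reappear verbatim; only the middle factor changes. Tracing $\{x\cdot u\backslash\{[y(u\backslash(xv))]/v\cdot[x\backslash(uv)]\}\}/v$ as a function of $y$ gives the map $R_{[u\backslash(xv)]}\mathbb{R}_vR_{[x\backslash(uv)]}\mathbb{L}_uL_x\mathbb{R}_v$, and it then suffices to verify that this coincides with the first entry $R_{[u\backslash(xv)]}\mathbb{R}_vR_{[x\backslash(uv)]}\mathbb{R}_{[u\backslash(xv)]}R_v\gamma(x,u,v)\mathbb{R}_v$ written in the statement. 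Substituting $\gamma=\mathbb{R}_vR_{[u\backslash(xv)]}\mathbb{L}_uL_x$ into the tail $\mathbb{R}_{[u\backslash(xv)]}R_v\gamma\mathbb{R}_v$ and telescoping via $R_v\mathbb{R}_v=I$ and $\mathbb{R}_{[u\backslash(xv)]}R_{[u\backslash(xv)]}=I$ collapses it to $\mathbb{L}_uL_x\mathbb{R}_v$, giving the claimed equality. Finally, each of $\alpha,\beta,\gamma$ (and the alternative first entry) is a finite product of translations and their inverses, hence lies in $\mathcal{M}\textrm{ult}(Q)$.

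I expect the only genuinely error-prone point to be bookkeeping: because maps act on the right, the outermost operation in each expression must become the \emph{last} map in the corresponding product, and the deeply nested slash/backslash brackets have to be matched term by term. The one place where a nontrivial simplification (rather than a mere reading) is needed is the collapse of the second triple's first entry, and that is exactly the $R_v\mathbb{R}_v=I$, $\mathbb{R}_wR_w=I$ cancellation just described.
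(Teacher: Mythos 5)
Your proposal is correct and follows exactly the route the paper intends: the paper's entire proof is the single remark that the lemma ``is obtained from identity OS$_0'$ or OS$_1'$ of Theorem~\ref{1:4},'' and your reading of the two identities as the autotopism relations $y\alpha\cdot z\beta=(yz)\gamma$, together with the $R_v\mathbb{R}_v=I$, $\mathbb{R}_wR_w=I$ collapse identifying the alternative first component, supplies precisely the omitted bookkeeping. Nothing further is needed.
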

\begin{proof}
This is obtained from identity OS$_0'$ or OS$_1'$ of
Theorem~\ref{1:4}.\end{proof}

\begin{myth}\label{1:9}
Let $Q$ be a loop with multiplication group
$\mathcal{M}\textrm{ult}(Q)$. If $Q$ is a universal Osborn loop,
then the triple $\big(\gamma
(x,u,v)\mathbb{R}_{_{(u\backslash[(u/v)(u\backslash (xv))])}},\beta
(x,u,v),\gamma (x,u,v)\big)\in AUT(Q)$ for all $x,u,v\in Q$ where
$\beta (x,u,v)=L_u\mathbb{R}_vR_{[u\backslash (xv)]}\mathbb{L}_u$
and $\gamma (x,u,v)=\mathbb{R}_vR_{[u\backslash
(xv)]}\mathbb{L}_uL_x$ are elements of $\mathcal{M}\textrm{ult}(Q)$.
\end{myth}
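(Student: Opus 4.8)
The plan is to derive this directly from Lemma~\ref{1:8} together with the elementary observation that an autotopism of a loop is overdetermined by its last two components. Since $Q$ is assumed to be a universal Osborn loop, Lemma~\ref{1:8} supplies the autotopism $\big(\alpha(x,u,v),\beta(x,u,v),\gamma(x,u,v)\big)\in AUT(Q)$ for all $x,u,v\in Q$. The triple in the present statement has exactly the same second and third entries $\beta(x,u,v)$ and $\gamma(x,u,v)$; hence it will suffice to show that its first entry $\gamma(x,u,v)\mathbb{R}_{(u\backslash[(u/v)(u\backslash(xv))])}$ coincides with $\alpha(x,u,v)$, for then the two triples are literally identical and membership in $AUT(Q)$ is inherited from the lemma.

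The key step is to record the identity that governs the first component of an arbitrary autotopism. If $e$ denotes the identity of $Q$ and $(A,B,C)\in AUT(Q)$, then $xA\cdot yB=(xy)C$ for all $x,y\in Q$; specialising to $y=e$ gives $xA\cdot (eB)=xC$, that is $xA=(xC)/(eB)$, so that $A=C\mathbb{R}_{eB}$. Thus the first component of an autotopism is completely determined by its third component $C$ and by the single element $eB$, the image of the identity under its second component. (Dually, putting $x=e$ yields $B=C\mathbb{L}_{eA}$, which can be used to cross-check the computation.)

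Applying this to the autotopism of Lemma~\ref{1:8}, I would evaluate $\beta(x,u,v)=L_u\mathbb{R}_vR_{[u\backslash(xv)]}\mathbb{L}_u$ at $e$, working left to right in the paper's composition convention: $eL_u=u$, then $u\mathbb{R}_v=u/v$, then $(u/v)R_{[u\backslash(xv)]}=(u/v)\cdot[u\backslash(xv)]$, and finally applying $\mathbb{L}_u$ gives $e\beta=u\backslash[(u/v)(u\backslash(xv))]$. Consequently $\alpha(x,u,v)=\gamma(x,u,v)\mathbb{R}_{e\beta}=\gamma(x,u,v)\mathbb{R}_{(u\backslash[(u/v)(u\backslash(xv))])}$, which is precisely the first entry appearing in the statement. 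Substituting this back shows that the target triple equals $\big(\alpha(x,u,v),\beta(x,u,v),\gamma(x,u,v)\big)$ and therefore lies in $AUT(Q)$.

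Since every step is a short translation manipulation, there is no serious obstacle here; the only point requiring care is maintaining the left-to-right composition convention of the paper (so that $x(C\mathbb{R}_{eB})=(xC)\mathbb{R}_{eB}$) and matching the bracketed element $u\backslash[(u/v)(u\backslash(xv))]$ exactly as it is produced by evaluating $\beta(x,u,v)$ at $e$. The content of the theorem is thus that the somewhat opaque first component $\alpha(x,u,v)$ of Lemma~\ref{1:8} admits the transparent reformulation $\gamma(x,u,v)\mathbb{R}_{e\beta}$, which is exactly the expected shape $C\mathbb{R}_{eB}$ for the leading entry of any loop autotopism.
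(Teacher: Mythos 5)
Your proof is correct and is essentially the paper's own argument recast in autotopism language: the paper obtains the result by setting $z=e$ in identity OS$_0'$ of Theorem~\ref{1:4} and rewriting, which is precisely your specialisation $xA\cdot eB=xC$ applied to the triple of Lemma~\ref{1:8}, and your computation $e\beta(x,u,v)=u\backslash[(u/v)(u\backslash(xv))]$ produces exactly the element appearing in the paper's rewritten identity. Nothing further is needed.
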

\begin{proof}
Theorem~\ref{1:4} will be employed. Let $z=e$ in identity OS$_0'$,
then {\footnotesize
\begin{displaymath}
x\cdot u\backslash\{y/v\cdot [u\backslash (xv)]\}=(x\cdot
u\backslash\{[y(u\backslash([(uv)/(u\backslash (xv))]v))]/v\cdot
[u\backslash (xv)]\})/v  \cdot u\backslash[(u/v)(u\backslash (xv))].
\end{displaymath}}
So, identity OS$_0'$ can now be written as{\footnotesize
\begin{displaymath}
x\cdot u\backslash\{(yz)/v\cdot [u\backslash (xv)]\}=\Big\{\{x\cdot
u\backslash[y/v\cdot (u\backslash
(xv))]\}/\{u\backslash[(u/v)(u\backslash (xv))]\}\Big\}\cdot
u\backslash[((uz)/v)(u\backslash (xv))].
\end{displaymath}}
From where we obtain $\Big(\gamma
(x,u,v)\mathbb{R}_{_{(u\backslash[(u/v)(u\backslash (xv))])}},\beta
(x,u,v),\gamma (x,u,v)\Big)\in AUT(Q)$.\end{proof}

\begin{mylem}\label{1:9a}
Let $(Q, \cdot ,\backslash ,/)$ be a universal Osborn loop. The
following identities are satisfied:
\begin{gather*}
\underbrace{y\{u\backslash ([(uv)/(u\backslash
(xv))]v)\}=\{(y[u\backslash (xv)])/v\cdot [x\backslash
(uv)]\}/[u\backslash (xv)]\cdot v}_{\textrm{OSI$_{01}$}},\\
\underbrace{\{(uz)/v\cdot u\backslash (\{(yv)(u\backslash
([(uv)/z]v))\}/v\cdot z)\}/v\cdot (u\backslash [(u/v)z])=(uz)/v\cdot
u\backslash (yz)}_{\textrm{OSI$_{01.1}$}}~\textrm{and}\\
\underbrace{(uz)/v\cdot u\backslash \{(yv\cdot z)/v\cdot
[((uz)/v)\backslash (uv)]\}=[(uz)/v\cdot u\backslash
(yz)]/\{u\backslash [(u/v)z]\}\cdot v}_{\textrm{OSI$_{01.2}$}}.\\
\textrm{Furthermore},~\underbrace{\{u\backslash(\{(uy\cdot
u)(u\backslash(uu\cdot u))\}/u)\}/u\cdot
u^\rho=y}_{\textrm{OSI$_{01.1.1}$}},\qquad uu\cdot u\backslash
(uu\cdot u)=(u\cdot uu)u,\\{\small \underbrace{v^\lambda\cdot
u\backslash \{(yv\cdot u^\rho)/v\cdot [v^\lambda\backslash
(uv)]\}=[v^\lambda\cdot u\backslash (yu^\rho)]/\{u\backslash
[(u/v)u^\rho]\}\cdot
v}_{\textrm{OSI$_{01.2.1}$}}},~\underbrace{v^\lambda (y\cdot
v^\lambda\backslash v)=(v^\lambda y)/v^\lambda\cdot
v}_{\textrm{OSI$_{01.2.2}$}},\\
v^\lambda\cdot (v\cdot v^\lambda\backslash v)=v^{\lambda^2}\cdot
v=(v^\lambda\cdot vv)v~\textrm{and}~v(v^\rho\cdot v\backslash v^\rho
)=v^\lambda\cdot v^\rho
\end{gather*}
are also satisfied.
\end{mylem}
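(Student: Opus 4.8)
The plan is to treat this lemma as a cascade of consequences of the two characterizing identities OS$_0'$ and OS$_1'$ of Theorem~\ref{1:4} (the autotopism triples of Lemma~\ref{1:8} and Theorem~\ref{1:9} give an equivalent route, but the bare identities are cleaner to manipulate). Throughout, the only tools beyond these are the loop division laws $u\cdot(u\backslash w)=w$, $u\backslash(uw)=w$, $(w/v)\cdot v=w$, $(wv)/v=w$, the bijectivity of the translations $L_x,R_x,\mathbb{R}_v$, and the defining relations $v^\lambda v=e$, $vv^\rho=e$. The first step is to establish the ``master'' identity OSI$_{01}$. Since $(Q,\cdot)$ is universal Osborn, both OS$_0'$ and OS$_1'$ hold, and their left-hand sides coincide while their right-hand sides share the trailing factor $u\backslash[((uz)/v)(u\backslash(xv))]$ and the outer operations $(\,\cdot\,)/v$ and $x\cdot u\backslash\{\,\cdot\,\}$. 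Cancelling these in turn (right-cancellation of the trailing factor, injectivity of $\mathbb{R}_v$, then left-cancellation by $x$ and by $u$) reduces the equality to $[y\cdot a]/v\cdot[u\backslash(xv)]=[y\cdot(u\backslash(xv))]/v\cdot[x\backslash(uv)]$, where $a=u\backslash([(uv)/(u\backslash(xv))]v)$; one right-division by $u\backslash(xv)$ and one right-multiplication by $v$ then isolate $y\cdot a$ and give precisely OSI$_{01}$.

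Next I would obtain OSI$_{01.1}$ and OSI$_{01.2}$ from the single substitution $x=(uz)/v$ (equivalently $u\backslash(xv)=z$) in OS$_0'$: the inner word is rewritten by OSI$_{01}$ (after the shift $y\mapsto yv$) into the common form $W=(yv\cdot z)/v\cdot[((uz)/v)\backslash(uv)]$, and the outer $/v$ is peeled off and $u\backslash[(u/v)z]$ reattached. These two identities are in fact interderivable, since $(\,\cdot\,)/v\cdot c$ and $(\,\cdot\,)/c\cdot v$ with $c=u\backslash[(u/v)z]$ pass into one another under the division laws, so the reduction need only be carried out once.

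Finally, every remaining identity is a specialization of these three. OSI$_{01.1.1}$ is OSI$_{01.1}$ with $v=u$, $z=e$ and $y$ replaced by $uy$, so that $(yv)\mapsto uy\cdot u$, $u\backslash([(uv)/z]v)\mapsto u\backslash(uu\cdot u)$, and the right side collapses to $y$; the cube identity $uu\cdot u\backslash(uu\cdot u)=(u\cdot uu)u$ drops out of the same $v=u,z=e$ reduction on taking $y=u$ and untangling $e/u^\rho=u$. OSI$_{01.2.1}$ is OSI$_{01.2}$ with $z=u^\rho$, where $(uz)/v=(uu^\rho)/v=e/v=v^\lambda$; OSI$_{01.2.2}$ is then OSI$_{01.2.1}$ with $u=e$. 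The two closing identities are OSI$_{01.2.2}$ with $y=v$ (giving $v^\lambda(v\cdot v^\lambda\backslash v)=e/v^\lambda\cdot v=v^{\lambda^2}v$, with the further equality $v^{\lambda^2}v=(v^\lambda\cdot vv)v$ supplied by the cube identity at $u=v^\lambda$ together with the inverse relations), respectively OSI$_{01.2.2}$ with $v$ replaced by $v^\rho$ and $y=v^\rho$ (using $(v^\rho)^\lambda=v$ and $vv^\rho=e$).

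I expect the main obstacle to be neither conceptual nor the choice of substitutions --- these are essentially forced once one matches the target words letter by letter --- but the sheer bookkeeping of the deeply nested quotients, and in particular the inverse-element simplifications $(uu^\rho)/v\rightsquigarrow v^\lambda$, $v^\lambda\backslash(vv)$, $e/u^\rho\rightsquigarrow u$, and $(v^\rho)^\lambda\rightsquigarrow v$, where a mis-sided division or a confusion between $\lambda$ and $\rho$ is easy to commit. I would guard against this by checking that each collapsed identity degenerates to a triviality when $(Q,\cdot)$ is a group.
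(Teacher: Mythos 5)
Your overall strategy coincides with the paper's: equating the first components of the two autotopism triples of Lemma~\ref{1:8} is exactly your element-wise comparison of OS$_0'$ and OS$_1'$ (and yields OSI$_{01}$ by the cancellations you describe), while your observation that OSI$_{01.1}$ and OSI$_{01.2}$ are interderivable is the paper's remark that two components of an autotopism determine the third. Your specializations for OSI$_{01.1.1}$ (setting $v=u$, $z=e$, $y\mapsto uy$ directly, rather than the paper's $z\mapsto uz$ followed by $z=u^\rho$), for the cube identity, for OSI$_{01.2.1}$, OSI$_{01.2.2}$, and for the final identity $v(v^\rho\cdot v\backslash v^\rho)=v^\lambda v^\rho$ (via $v\mapsto v^\rho$, $y=v^\rho$, $(v^\rho)^\lambda=v$) all check out, and the last of these is in fact more explicit than what the paper records.

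Two points need repair. First, your derivation of OSI$_{01.1}$ ``from the single substitution $x=(uz)/v$ in OS$_0'$'' silently requires first setting the original $z$-variable of OS$_0'$ equal to $e$ (so that the trailing factor becomes $u\backslash[(u/v)(u\backslash(xv))]$ and the left side loses its inner $z$); this is precisely the content of Theorem~\ref{1:9}, and without it the variable now called $z$ in OSI$_{01.1}$ (which is $u\backslash(xv)$) clashes with the surviving $z$ of OS$_0'$. Second, and more substantively, the equality $v^{\lambda^2}\cdot v=(v^\lambda\cdot vv)v$ does \emph{not} follow from the cube identity at $u=v^\lambda$: that substitution produces a statement about $v^\lambda v^\lambda$ and $v^\lambda\cdot v^\lambda v^\lambda$, not about $v^\lambda\cdot vv$. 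What is actually needed is the separate Osborn-loop fact $J_\lambda^2\colon x\mapsto x^\lambda\cdot xx$ (equivalently $x^{\lambda^2}=x^\lambda\cdot xx$), which the paper invokes explicitly here and again in Lemma~\ref{1:9.1h}; you should import that fact rather than attempt to extract it from the identities already proved in this lemma.
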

\begin{proof}
To prove these identities, we shall make use of the three
autotopisms in Lemma~\ref{1:8} and Theorem~\ref{1:9}. In a
quasigroup, any two components of an autotopism uniquely determine
the third. So equating the first components of the three
autotopisms, it is easy to see that
\begin{displaymath}
\alpha (x,u,v)=\gamma
(x,u,v)\mathbb{R}_{_{(u\backslash[(u/v)(u\backslash
(xv))])}}=R_{[u\backslash (xv)]}\mathbb{R}_vR_{[x\backslash
(uv)]}\mathbb{R}_{[u\backslash (xv)]}R_v\gamma (x,u,v)\mathbb{R}_v.
\end{displaymath}
The establishment of the identities OSI$_{01}$, OSI$_{01.1}$ and
OSI$_{01.2}$ follows by using the bijections appropriately to map an
arbitrary element $y\in Q$ as follows:
\begin{description}
\item[OSI$_{01}$]
\begin{displaymath}
\alpha (x,u,v)=R_{[u\backslash (xv)]}\mathbb{R}_vR_{[x\backslash
(uv)]}\mathbb{R}_{[u\backslash (xv)]}R_v\gamma
(x,u,v)\mathbb{R}_v~\textrm{implies that}
\end{displaymath}
\begin{displaymath}
R_{_{(u\backslash([(uv)/(u\backslash
(xv))]v))}}\mathbb{R}_vR_{[u\backslash
(xv)]}\mathbb{L}_uL_x\mathbb{R}_v=
\end{displaymath}
\begin{displaymath}
R_{_{(u\backslash([(uv)/(u\backslash (xv))]v))}}\gamma
(x,u,v)\mathbb{R}_v=R_{[u\backslash
(xv)]}\mathbb{R}_vR_{[x\backslash (uv)]}\mathbb{R}_{[u\backslash
(xv)]}R_v\gamma (x,u,v)\mathbb{R}_v~\textrm{which gives}
\end{displaymath}
\begin{displaymath}
R_{_{(u\backslash([(uv)/(u\backslash (xv))]v))}}=R_{[u\backslash
(xv)]}\mathbb{R}_vR_{[x\backslash (uv)]}\mathbb{R}_{[u\backslash
(xv)]}R_v.
\end{displaymath}
So, for any $y\in Q$,
\begin{displaymath}
yR_{_{(u\backslash([(uv)/(u\backslash (xv))]v))}}=yR_{[u\backslash
(xv)]}\mathbb{R}_vR_{[x\backslash (uv)]}\mathbb{R}_{[u\backslash
(xv)]}R_v~\textrm{implies that}
\end{displaymath}
\begin{displaymath}
y\{u\backslash ([(uv)/(u\backslash (xv))]v)\}=\{(y[u\backslash
(xv)])/v\cdot [x\backslash (uv)]\}/[u\backslash (xv)]\cdot v.
\end{displaymath}
\item[OSI$_{01.1}$] Consider
\begin{displaymath}
\alpha (x,u,v)=\gamma
(x,u,v)\mathbb{R}_{_{(u\backslash[(u/v)(u\backslash
(xv))])}},~\textrm{then for all $y\in Q$,}
\end{displaymath}
\begin{displaymath}
y\alpha (x,u,v)=yR_{_{(u\backslash([(uv)/(u\backslash
(xv))]v))}}\mathbb{R}_vR_{[u\backslash
(xv)]}\mathbb{L}_uL_x\mathbb{R}_v=y\gamma
(x,u,v)\mathbb{R}_{_{(u\backslash[(u/v)(u\backslash
(xv))])}}=
\end{displaymath}
\begin{displaymath}
y\mathbb{R}_vR_{[u\backslash
(xv)]}\mathbb{L}_uL_x\mathbb{R}_{_{(u\backslash[(u/v)(u\backslash
(xv))])}}.~\textrm{Consequently,}
\end{displaymath}
\begin{displaymath}
\{x\cdot u\backslash(\{y(u\backslash([(uv)/(u\backslash
(xv))]v))\}/v\cdot [u\backslash (xv)])\}/v=\{x\cdot
u\backslash(y/v\cdot [u\backslash
(xv)])\}/(u\backslash[(u/v)(u\backslash (xv))]).
\end{displaymath}
Now replace $y/v$ by $y$ and post-multiply both sides by
$(u\backslash[(u/v)(u\backslash (xv))])$ to get
\begin{displaymath}
\{x\cdot u\backslash(\{(yv)(u\backslash([(uv)/(u\backslash
(xv))]v))\}/v\cdot [u\backslash (xv)])\}/v\cdot
(u\backslash[(u/v)(u\backslash (xv))])=x\cdot u\backslash(y\cdot
[u\backslash (xv)]).
\end{displaymath}
Again, let $z=u\backslash (xv)$ which implies that $x=(uz)/v$ and
hence we now have
\begin{displaymath}
\{(uz)/v\cdot u\backslash(\{(yv)(u\backslash([(uv)/z]v))\}/v\cdot
z)\}/v\cdot (u\backslash[(u/v)z])=(uz)/v\cdot u\backslash(yz).
\end{displaymath}
\item[OSI$_{01.2}$] Consider
\begin{displaymath}
R_{[u\backslash (xv)]}\mathbb{R}_vR_{[x\backslash
(uv)]}\mathbb{R}_{[u\backslash (xv)]}R_v\gamma
(x,u,v)\mathbb{R}_v=\gamma
(x,u,v)\mathbb{R}_{_{(u\backslash[(u/v)(u\backslash
(xv))])}},~\textrm{then for all $y\in Q$,}
\end{displaymath}
\begin{displaymath}
yR_{[u\backslash (xv)]}\mathbb{R}_vR_{[x\backslash
(uv)]}\mathbb{R}_{[u\backslash (xv)]}R_v\gamma
(x,u,v)\mathbb{R}_v=y\gamma
(x,u,v)\mathbb{R}_{_{(u\backslash[(u/v)(u\backslash
(xv))])}}~\textrm{results in}
\end{displaymath}
\begin{displaymath}
(\{\big[(y[u\backslash (xv)])/v\cdot [x\backslash
(uv)]\big]/[u\backslash (xv)]\cdot v\}\gamma (x,u,v))/v=\big(y\gamma
(x,u,v)\big)/(u\backslash[(u/v)(u\backslash (xv))])
\end{displaymath}
which is equivalent to the equation below after substituting the
value of $\gamma (x,u,v)$ and post-multiply both sides by $v$:
\begin{displaymath}
x\cdot u\backslash(\{\big[(y[u\backslash (xv)])/v\cdot [x\backslash
(uv)]\big]/[u\backslash (xv)]\cdot v\}/v\cdot [u\backslash (xv)])=
\end{displaymath}
\begin{displaymath}
[x\cdot u\backslash\big(y/v\cdot [u\backslash
(xv)]\big)]/(u\backslash[(u/v)(u\backslash (xv))])\cdot v.
\end{displaymath}
Do the replacement $z=u\backslash (xv)\Rightarrow x=(uz)/v$ to get
\begin{displaymath}
(uz)/v\cdot u\backslash\big[(yz)/v\cdot [(uz)/v\backslash
(uv)]\big]=[(uz)/v\cdot u\backslash\big(y/v\cdot
z)]/(u\backslash[(u/v)z])\cdot v.
\end{displaymath}
Now, replace $y$ by $yv$ to get
\begin{displaymath}
(uz)/v\cdot u\backslash\big[(yv\cdot z)/v\cdot [(uz)/v\backslash
(uv)]\big]=[(uz)/v\cdot
u\backslash\big(yz)]/(u\backslash[(u/v)z])\cdot v.
\end{displaymath}
\end{description}
Identity OSI$_{01.1.1}$ is deduced from identity OSI$_{01.1}$ while
identities OSI$_{01.2.1}$ and OSI$_{01.2.2}$ are deduced from
identity OSI$_{01.2}$. The other identities are gotten from
OSI$_{01.1.1}$ and OSI$_{01.2.2}$.
\begin{description}
\item[OSI$_{01.1.1}$] Put $u=v$ in identity OSI$_{01.1}$ to get
\begin{displaymath}
\{(uz)/u\cdot u\backslash(\{(yu)(u\backslash([(uu)/z]u))\}/u\cdot
z)\}/u\cdot (u\backslash z)=(uz)/u\cdot u\backslash(yz).
\end{displaymath}
Now replace $z$ by $uz$ to get
\begin{displaymath}
\{(u\cdot uz)/u\cdot
u\backslash(\{(yu)(u\backslash([(uu)/(uz)]u))\}/u\cdot uz)\}/u\cdot
z=(u\cdot uz)/u\cdot u\backslash(y\cdot uz).
\end{displaymath}
Then, substitute $z=u^\rho$ and compute to have
\begin{displaymath}
\{u\backslash(\{(yu)(u\backslash(uu\cdot u))\}/u)\}/u\cdot
u^\rho=u\backslash y.
\end{displaymath}
Replacing $y$ by $uy$, finally have
\begin{displaymath}
\{u\backslash(\{(uy\cdot u)(u\backslash(uu\cdot u))\}/u)\}/u\cdot
u^\rho=y.
\end{displaymath}
\item[OSI$_{01.2.1}$] Substitute $z=u^\rho$ in identity OSI$_{01.2}$ to get
\begin{displaymath}
v^\lambda\cdot u\backslash\big[(yv\cdot u^\rho)/v\cdot
[v^\lambda\backslash (uv)]\big]=[v^\lambda\cdot
u\backslash\big(yu^\rho)]/(u\backslash[(u/v)u^\rho])\cdot v.
\end{displaymath}
\item[OSI$_{01.2.2}$] Put $u=e$ in identity OSI$_{01.2.1}$ to get
\begin{displaymath}
v^\lambda (y\cdot v^\lambda\backslash v)=(v^\lambda
y)/v^\lambda\cdot v.
\end{displaymath}
\end{description}
By putting $y=e$ in identity OSI$_{01.1.1}$, we have $uu\cdot
u\backslash (uu\cdot u)=(u\cdot uu)u$. Also, substitute $y=v$ into
identity OSI$_{01.2.2}$ and use the fact that
$x^{\lambda^2}=x^\lambda\cdot xx$ to get $v^\lambda\cdot (v\cdot
v^\lambda\backslash v)=v^{\lambda^2}\cdot v=(v^\lambda\cdot
vv)v~\textrm{and}~v(v^\rho\cdot v\backslash v^\rho )=v^\lambda\cdot
v^\rho$.\end{proof}

\begin{mylem}\label{1:9b}
A universal Osborn loop is a 3-PAPL if and only if it is a {\Large
4}$_{11\cdot 11=(1\cdot 11)1}^{1}$ and a {\Large 4}$_{11\cdot
11=(11\cdot 1)1}^{1}$ loop.
\end{mylem}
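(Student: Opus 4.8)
The plan is to run everything off the single consequence of universality recorded in Lemma~\ref{1:9a}, namely the identity
\begin{displaymath}
xx\cdot [x\backslash (xx\cdot x)]=(x\cdot xx)x
\end{displaymath}
obtained there by putting $y=e$ in OSI$_{01.1.1}$. Write (A) for the defining identity $xx\cdot xx=(x\cdot xx)x$ of the {\Large 4}$_{11\cdot 11=(1\cdot 11)1}^{1}$ property and (B) for the defining identity $xx\cdot xx=(xx\cdot x)x$ of the {\Large 4}$_{11\cdot 11=(11\cdot 1)1}^{1}$ property. Once the displayed identity is in hand, the entire equivalence reduces to cancellation in the loop; the Osborn structure is used only through that one identity.

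First I would treat the forward implication. Assuming the loop is a 3-PAPL, we have $xx\cdot x=x\cdot xx$, whence $x\backslash (xx\cdot x)=x\backslash (x\cdot xx)=xx$ after applying $L_x^{-1}$. Substituting this value into the displayed identity collapses its left-hand side to $xx\cdot xx$, which is exactly (A). Then, rewriting the factor $x\cdot xx$ appearing on the right of (A) as $xx\cdot x$ (again by 3-power associativity) delivers (B). Thus a 3-PAPL universal Osborn loop satisfies both {\Large 4}-identities.

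For the converse I would assume both (A) and (B). Their left-hand sides coincide, so their right-hand sides must agree, giving $(x\cdot xx)x=(xx\cdot x)x$; cancelling the common right factor $x$ (right translations are bijections in any loop) yields $x\cdot xx=xx\cdot x$, which is precisely the 3-PAPL identity.

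I do not expect a genuine obstacle: the converse holds in an arbitrary loop, and universality enters only via the cited identity of Lemma~\ref{1:9a}, which already encapsulates the needed Osborn content. The one step that must be handled with care is the simplification $x\backslash (xx\cdot x)=xx$ in the forward direction --- it is legitimate only because 3-power associativity allows $xx\cdot x$ to be rewritten as $x\cdot xx$ before inverting the left translation $L_x$, so the hypothesis is genuinely being used there rather than quietly assumed in the conclusion.
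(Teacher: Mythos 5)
Your proposal is correct and follows exactly the route the paper intends: it invokes the identity $uu\cdot u\backslash(uu\cdot u)=(u\cdot uu)u$ from Lemma~\ref{1:9a} and settles both directions by substitution and right cancellation, which is precisely what the paper's terse proof ("the necessary and sufficient parts are easy to prove using this identity") leaves to the reader. Your added caution about only simplifying $x\backslash(xx\cdot x)$ to $xx$ after rewriting via the 3-PAPL hypothesis is a correct and worthwhile clarification, but it does not change the argument.
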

\begin{proof}
In Lemma~\ref{1:9a}, it was shown that $uu\cdot u\backslash (uu\cdot
u)=(u\cdot uu)u$ in a universal Osborn loop. The necessary and
sufficient parts are easy to prove using this identity.\end{proof}

\begin{mylem}\label{1:9c}
In a universal Osborn loop $Q$, the following are equivalent.
\begin{enumerate}
\item $Q$ is a 3-PAPL.
\item $Q$ is a {\Large 4}$_{11\cdot 11=(1\cdot 11)1}^{1}$ loop and a {\Large 4}$_{11\cdot 11=(11\cdot 1)1}^{1}$ loop.
\item $Q$ is a LSIPL.
\item $Q$ obeys the identity $v[v^\lambda\cdot (v\cdot v^\lambda\backslash v)]=v^\lambda\backslash v\cdot
v$.
\item $Q$ is a {\Large 4}$_{12\cdot 22=(1\cdot 22)2}^{1,3}$ loop.
\item $Q$ is a {\Large 4}$_{11\cdot 11=(1\cdot 11)1}^{1}$ loop.
\end{enumerate}
\end{mylem}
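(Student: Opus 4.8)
The plan is to prove the six conditions equivalent by a web of implications anchored at 3-PAPL (condition~(1)), drawing throughout on the battery of identities already established in Lemma~\ref{1:9a} and on Lemma~\ref{1:9b}. The equivalence (1)$\Leftrightarrow$(2) is nothing but Lemma~\ref{1:9b}, so I would take it as given, and (2)$\Rightarrow$(6) is immediate since (6) is the first of the two defining identities of (2).

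The link (1)$\Leftrightarrow$(6) I would make completely explicit, since it is the cleanest step and it closes the first loop. From Lemma~\ref{1:9a} we have $uu\cdot u\backslash(uu\cdot u)=(u\cdot uu)u$. Condition~(6), written with a single variable, reads $uu\cdot uu=(u\cdot uu)u$; substituting the previous identity into the right-hand side gives $uu\cdot uu=uu\cdot[u\backslash(uu\cdot u)]$, and left cancellation by $uu$ yields $uu=u\backslash(uu\cdot u)$, i.e. $uu\cdot u=u\cdot uu$, which is~(1). Running this backwards gives (1)$\Rightarrow$(6). The implication (5)$\Rightarrow$(6) is then trivial: (5) is $xy\cdot yy=(x\cdot yy)y$, and setting $x=y$ produces exactly~(6).

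For (3) and (4) I would first distil two auxiliary identities from the Lemma~\ref{1:9a} list. Right-cancelling $v$ in $(v^\lambda\cdot vv)v=v^{\lambda^2}v$ gives $v^\lambda\cdot vv=v^{\lambda^2}$, so LSIPL ($v^\lambda\cdot vv=v$) is equivalent to the single inverse condition $v^{\lambda^2}=v$. Next, reading identity $\mathrm{OSI}_{01.2.2}$, namely $v^\lambda(y\cdot v^\lambda\backslash v)=(v^\lambda y)/v^\lambda\cdot v$, as an operator identity in $y$ and cancelling $L_{v^\lambda}$ gives $R_{v^\lambda\backslash v}=R_{v^\lambda}^{-1}R_v$; its value at $y=e$ then yields $v^\lambda\backslash v=v^{\lambda^2}v$. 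Feeding this into~(4) — whose left side equals $v(v^{\lambda^2}v)$ by the $y=v$ case of $\mathrm{OSI}_{01.2.2}$ — rewrites (4) as $v(v^{\lambda^2}v)=(v^{\lambda^2}v)v$. Thus both (3) and (4) are governed by how $v^{\lambda^2}$ sits relative to $v$, and under $v^{\lambda^2}=v$ each collapses to 3-PAPL; the remaining work is to run these reductions in the forward direction so as to close (1)$\Rightarrow$(3)$\Rightarrow$(4)$\Rightarrow$(1).

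The two genuinely hard points are as follows. First, establishing the full two-variable identity~(5) from the diagonal information~(1)/(6): passing from $yy\cdot yy=(y\cdot yy)y$ to $xy\cdot yy=(x\cdot yy)y$ — equivalently, to the commuting relation $R_yR_{yy}=R_{yy}R_y$ — cannot be done by specialisation, and I expect this to be the main obstacle; I would attack it through the autotopism descriptions of Lemma~\ref{1:8} and Theorem~\ref{1:9} (or a suitable two-variable specialisation of the $\mathrm{OSI}$ identities), choosing the parameters so that the extracted component identity is precisely $R_yR_{yy}=R_{yy}R_y$. Second, the inverse bookkeeping that forces $v^{\lambda^2}=v$ to be equivalent to 3-PAPL, rather than merely compatible with it, is delicate; I would handle it by combining $v^\lambda\cdot vv=v^{\lambda^2}$, $v^\lambda\backslash v=v^{\lambda^2}v$, and $uu\cdot u\backslash(uu\cdot u)=(u\cdot uu)u$, keeping careful track of which one-sided cancellations are legitimate in a loop.
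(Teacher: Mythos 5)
Your handling of (1)$\Leftrightarrow$(2)$\Leftrightarrow$(6) and of (5)$\Rightarrow$(6) is correct and matches the paper's intent: the cancellation argument from $uu\cdot u\backslash(uu\cdot u)=(u\cdot uu)u$ is exactly how Lemma~\ref{1:9b} is meant to work. But the proposal has two genuine problems. The larger one is that the implication you yourself flag as ``the main obstacle'' --- obtaining the two-variable identity (5), $yv\cdot vv=(y\cdot vv)v$, from the one-variable conditions --- is left as a plan rather than a proof, and it is precisely the step that carries the content of the lemma. The paper closes it by observing that a universal Osborn loop is in particular a \emph{left} universal Osborn loop, so the whole battery of Lemma~\ref{1:9.1a} is available; identity OSI$_{01.2.1}^\lambda$, namely $v\{(yv\cdot vv)/v\}=[v(y\cdot vv)]/(v^\lambda\cdot vv)\cdot v$, is already a two-variable consequence of the autotopism components, and under LSIPL its right-hand side collapses to $v(y\cdot vv)$, giving (5) after cancelling $L_v$ (this is Lemma~\ref{1:9.1d}, which the paper simply cites). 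So the missing idea is not a new autotopism computation but the reduction to the left-universal case and the specific identity OSI$_{01.2.1}^\lambda$.

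The second problem is the step ``cancelling $L_{v^\lambda}$'' in OSI$_{01.2.2}$. Written in operators that identity is $R_{v^\lambda\backslash v}L_{v^\lambda}=L_{v^\lambda}R_{v^\lambda}^{-1}R_v$, with $L_{v^\lambda}$ on opposite sides, so it does not yield $R_{v^\lambda\backslash v}=R_{v^\lambda}^{-1}R_v$ but only a conjugate of it; moreover the evaluation of OSI$_{01.2.2}$ at $y=e$ is the triviality $v=v$, so your intermediate identity $v^\lambda\backslash v=v^{\lambda^2}v$ is unjustified (and I do not believe it holds in a general universal Osborn loop). Consequently your rewriting of (4) as $v(v^{\lambda^2}v)=(v^{\lambda^2}v)v$ is not established. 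The correct handle on (3) and (4) is the identity $v\cdot vv=v^\lambda\backslash v\cdot v$ from Lemma~\ref{1:9.1a}: it gives at once $3$-PAPL $\Leftrightarrow$ $vv=v^\lambda\backslash v$ $\Leftrightarrow$ LSIPL (Lemma~\ref{1:9.1b}), and it turns the right side of (4) into $v\cdot vv$ while Lemma~\ref{1:9a} turns the left side into $v(v^{\lambda^2}v)$, so (4) is equivalent to $v^{\lambda^2}v=vv$, i.e. to $v^{\lambda^2}=v$, i.e. to LSIPL. Your instinct that everything hinges on $v^{\lambda^2}=v$ is right; the route you chose to it is not.
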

\begin{proof}
This is established by using the identities $uu\cdot u\backslash
(uu\cdot u)=(u\cdot uu)u$ and $v^\lambda\cdot (v\cdot
v^\lambda\backslash v)=(v^\lambda\cdot vv)v$ of Lemma~\ref{1:9};
Lemma~\ref{1:9.1b}, Lemma~\ref{1:9.1d},
Lemma~\ref{1:9.1e}.\end{proof}

\begin{mycor}\label{1:9ci}
In a universal Osborn loop, the {\Large 4}$_{11\cdot 11=(1\cdot
11)1}^{1}$ and {\Large 4}$_{11\cdot 11=(11\cdot 1)1}^{1}$ loop
properties are equivalent.
\end{mycor}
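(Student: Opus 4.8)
The plan is to show the two one-variable $4$-identities $uu\cdot uu=(u\cdot uu)u$ and $uu\cdot uu=(uu\cdot u)u$ equivalent by proving that each is, in a universal Osborn loop, equivalent to $3$-power associativity $u\cdot uu=uu\cdot u$; the corollary then follows, since the two identities coincide once $u\cdot uu=uu\cdot u$ holds. The engine throughout is the one-variable identity $uu\cdot\big(u\backslash(uu\cdot u)\big)=(u\cdot uu)u$ of Lemma~\ref{1:9a} (call it $(\ast)$), used alongside Lemmas~\ref{1:9b} and~\ref{1:9c}.

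First I would dispose of $uu\cdot uu=(u\cdot uu)u\Rightarrow uu\cdot uu=(uu\cdot u)u$. Feeding the hypothesis into the right side of $(\ast)$ gives $uu\cdot uu=uu\cdot\big(u\backslash(uu\cdot u)\big)$, and applying $L_{uu}^{-1}$ cancels the left factor, leaving $u\backslash(uu\cdot u)=uu$, i.e.\ $uu\cdot u=u\cdot uu$. This is exactly the implication $(6)\Rightarrow(1)$ of Lemma~\ref{1:9c}. The forward half of Lemma~\ref{1:9b} ($3$-power associativity $\Rightarrow$ both $4$-identities) then returns $uu\cdot uu=(uu\cdot u)u$.

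The hard part will be the reverse implication $uu\cdot uu=(uu\cdot u)u\Rightarrow uu\cdot uu=(u\cdot uu)u$, and this is where I expect the main obstacle to sit. The difficulty is that the right side of $(\ast)$ is the right-bracketed cube $(u\cdot uu)u$, which meshes with the first $4$-identity but not with the second, whose distinguished element is the left-bracketed $(uu\cdot u)u$; the tidy left-cancellation above has no verbatim analogue. To repair this I would manufacture a right-handed companion of $(\ast)$ by rerunning the component-equating computation of Lemma~\ref{1:9a}, starting from the comparisons that produced OSI$_{01}$ and OSI$_{01.2}$ (those pitting the OS$_1'$ first component of Lemma~\ref{1:8} against $\alpha$ and against the component of Theorem~\ref{1:9}) rather than from OSI$_{01.1}$, and specialising them just as one passes from OSI$_{01.1}$ to OSI$_{01.1.1}$ and thence to $(\ast)$ — namely $u=v$, then $z=u^\rho$, then $y=e$. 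What I need from this is a one-variable identity of the form $uu\cdot c(u)=(uu\cdot u)u$ whose cofactor $c(u)$ equals $uu$ \emph{precisely} when $uu\cdot u=u\cdot uu$ (rather than merely when the second $4$-identity holds); granting such an identity, the hypothesis $uu\cdot uu=(uu\cdot u)u$ gives $uu\cdot c(u)=uu\cdot uu$, whence $c(u)=uu$ and so $uu\cdot u=u\cdot uu$. The forward half of Lemma~\ref{1:9b} then supplies $uu\cdot uu=(u\cdot uu)u$. Isolating a cofactor with this exact associativity-detecting behaviour is the crux; should it resist a direct derivation, the fallback is to route $(uu\cdot u)u$ back to $3$-power associativity through the auxiliary Lemmas~\ref{1:9.1b}, \ref{1:9.1d} and~\ref{1:9.1e} already invoked in the proof of Lemma~\ref{1:9c}.

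Assembling the two directions, each $4$-identity is equivalent to $u\cdot uu=uu\cdot u$ and hence to the other, which is the corollary. Every step but the companion identity of the third paragraph is mechanical cancellation together with a citation of Lemmas~\ref{1:9b} and~\ref{1:9c}; the single genuinely new computation is that right-handed analogue of $(\ast)$, and that is where the effort belongs.
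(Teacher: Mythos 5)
Your forward direction is sound and is exactly the paper's route: feeding $uu\cdot uu=(u\cdot uu)u$ into the identity $uu\cdot[u\backslash(uu\cdot u)]=(u\cdot uu)u$ of Lemma~\ref{1:9a} and left-cancelling $uu$ yields $u\backslash(uu\cdot u)=uu$, i.e.\ $3$-power associativity, and Lemma~\ref{1:9b} then returns both four-letter identities; this is the chain $(6)\Rightarrow(1)\Rightarrow(2)$ of Lemma~\ref{1:9c}, which is all the paper itself cites.

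The gap is the converse, $uu\cdot uu=(uu\cdot u)u\Rightarrow uu\cdot uu=(u\cdot uu)u$, which you rightly flag as the crux but never close. The ``right-handed companion'' $uu\cdot c(u)=(uu\cdot u)u$ is not exhibited, and it is doubtful that specialising the OSI$_{01}$/OSI$_{01.2}$ comparisons by $u=v$, $z=u^\rho$, $y=e$ produces it: in the paper those specialisations lead to statements about $v^\lambda$ (OSI$_{01.2.1}$, OSI$_{01.2.2}$), not to a left factor $uu$ against $(uu\cdot u)u$. Your fallback also fails as stated, because Lemmas~\ref{1:9.1b}, \ref{1:9.1d} and~\ref{1:9.1e} all characterise LSIP/$3$-power associativity in terms of the \emph{first} identity $uu\cdot uu=(u\cdot uu)u$ or of two-variable identities; none of them has $(uu\cdot u)u$ as hypothesis. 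The paper's only handle on the second identity is Lemma~\ref{1:9.1c}, obtained from $vv\cdot vv=v^\lambda\backslash(v^{\lambda^2}v)\cdot v$ of Lemma~\ref{1:9.1a}, which converts your hypothesis into $v^\lambda(vv\cdot v)=v^{\lambda^2}v=(v^\lambda\cdot vv)v$; that, not a new autotopism computation, is the identity you should be working from. Even so, one must still argue that $v^\lambda(vv\cdot v)=(v^\lambda\cdot vv)v$ forces $3$-power associativity, a step the paper leaves implicit: its one-line citation of Lemma~\ref{1:9c} only delivers ``first $\Rightarrow$ second'' directly, since item~(2) there is the \emph{conjunction} of the two properties. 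So your difficulty is tracking a genuine thinness in the source, but as submitted the proposal proves only half of the equivalence.
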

\begin{proof}
This follows from Lemma~\ref{1:9c}.\end{proof}

\begin{mycor}\label{1:9d}
A universal Osborn loop that is a LSIPL or RSIPL or 3-PAPL or
{\Large 4}$_{12\cdot 22=(1\cdot 22)2}^{1,3}$ or {\Large 4}$_{11\cdot
11=(1\cdot 11)1}^{1}$ loop is a L${}^2$BSIPL and a L${}^1$BSIPL.
\end{mycor}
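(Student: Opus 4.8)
The plan is to collapse the five hypotheses to one and then run a single short computation. First I would apply Lemma~\ref{1:9c}: in a universal Osborn loop the LSIPL, 3-PAPL, {\Large 4}$_{12\cdot 22=(1\cdot 22)2}^{1,3}$ and {\Large 4}$_{11\cdot 11=(1\cdot 11)1}^{1}$ properties are mutually equivalent, so any one of these four hypotheses already forces $Q$ to be simultaneously an LSIPL and a 3-PAPL. The fifth hypothesis, RSIPL, is the only one absent from that equivalence, and I would treat it separately (see the last paragraph). Granting that RSIPL too forces 3-PAPL, it suffices to prove the conclusion under the single assumption that $Q$ is a universal Osborn loop that is both LSIPL and 3-PAPL. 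Observe that once 3-PAPL holds we have $xx\cdot x=x\cdot xx$, so the defining identities $x^\lambda(xx\cdot x)=xx$ and $x^\lambda(x\cdot xx)=xx$ of L${}^1$BSIPL and L${}^2$BSIPL coincide; hence establishing either one gives both.

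The core computation begins from the identity $v^\lambda\cdot (v\cdot v^\lambda\backslash v)=(v^\lambda\cdot vv)v$ of Lemma~\ref{1:9a}, valid in every universal Osborn loop. Feeding in the LSIPL relation $v^\lambda\cdot vv=v$ simplifies both sides simultaneously. On the right, the inner factor $v^\lambda\cdot vv$ equals $v$, so $(v^\lambda\cdot vv)v=vv$. On the left, $v^\lambda\cdot vv=v$ says exactly that $v^\lambda\backslash v=vv$ by uniqueness of left division, whence $v\cdot v^\lambda\backslash v=v\cdot vv$ and the left side becomes $v^\lambda\cdot (v\cdot vv)$. The identity therefore reads $v^\lambda\cdot (v\cdot vv)=vv$, which is precisely L${}^2$BSIPL. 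Rewriting $v\cdot vv$ as $vv\cdot v$ by 3-PAPL yields $v^\lambda\cdot (vv\cdot v)=vv$, which is L${}^1$BSIPL. This settles the four hypotheses covered by Lemma~\ref{1:9c}.

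The main obstacle is the RSIPL case, because RSIPL is a right-handed property while the conclusion is left-handed, and RSIPL is not among the equivalences of Lemma~\ref{1:9c}. I expect to dispose of it by first proving that, in a universal Osborn loop, RSIPL implies 3-PAPL, after which the computation above applies unchanged. The natural tool is the right-inverse companion identity $v(v^\rho\cdot v\backslash v^\rho)=v^\lambda\cdot v^\rho$ of Lemma~\ref{1:9a}: substituting the RSIPL relation $vv\cdot v^\rho=v$ (equivalently $v/v^\rho=vv$) into it should force $x\cdot xx=xx\cdot x$, just as LSIPL collapsed the left-inverse identity above. Failing a direct derivation, one can instead reach 3-PAPL through the right-handed analogues of the auxiliary lemmas used in the proof of Lemma~\ref{1:9c}. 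Either route reduces RSIPL to the already-settled 3-PAPL case, completing the corollary.
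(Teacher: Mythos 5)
Your reduction of the LSIPL, 3-PAPL and the two {\Large 4}-loop hypotheses to the single case ``LSIPL together with 3-PAPL'' via Lemma~\ref{1:9c} is sound, and the ensuing computation is correct: from $v^\lambda\cdot(v\cdot v^\lambda\backslash v)=(v^\lambda\cdot vv)v$ of Lemma~\ref{1:9a}, LSIPL gives $v^\lambda\backslash v=vv$ and $(v^\lambda\cdot vv)v=vv$, hence $v^\lambda(v\cdot vv)=vv$, which is L${}^2$BSIPL, and 3-PAPL converts this to L${}^1$BSIPL. This is a legitimate variant of the paper's route, which instead passes through Lemma~\ref{1:9.1c} (the identity $v^\lambda(vv\cdot v)=v^{\lambda^2}v$) combined with the Osborn relation $v^{\lambda^2}=v^\lambda\cdot vv$; the two computations are of comparable difficulty and both rest on the same battery of equivalences.

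The genuine gap is the RSIPL case, which you correctly identify as the obstacle but do not close. Your proposed tool, $v(v^\rho\cdot v\backslash v^\rho)=v^\lambda\cdot v^\rho$, does not accept the substitution you describe: RSIPL says $vv\cdot v^\rho=v$, i.e.\ it controls $v/v^\rho$, whereas the identity involves the unrelated element $v\backslash v^\rho$, so nothing simplifies and no derivation of 3-PAPL falls out. The fallback appeal to ``right-handed analogues of the auxiliary lemmas'' is not an argument. The missing ingredient is already in the paper: Lemma~\ref{1:9.1h} shows that in \emph{any} Osborn loop LSIP and RSIP are equivalent (since $J_\lambda^2:x\mapsto x^\lambda\cdot xx$ and $J_\rho^2:x\mapsto xx\cdot x^\rho$ with $J_\rho=J_\lambda^{-1}$, so one of these maps is the identity if and only if the other is), and Corollary~\ref{1:9.1i} --- which the paper's own proof cites --- lists RSIP among the properties equivalent to LSIP in a left universal, hence in a universal, Osborn loop. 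Citing either of these reduces RSIPL to the LSIPL case you have already handled and completes the proof.
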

\begin{proof}
This is established by using Corollary~\ref{1:9.1i},
Lemma~\ref{1:9.1c} and Lemma~\ref{1:9c}.\end{proof}

\begin{myth}\label{1:4.1}
A loop $(Q, \cdot ,\backslash ,/)$ is a left universal Osborn loop
if and only if it obeys the identity
\begin{displaymath}
\underbrace{x\cdot [(y\cdot zv)/v\cdot (xv)]=(x\cdot
\{[y([v/(xv)]v)]/v\cdot (xv)\})/v \cdot [z\cdot
xv]}_{\textrm{OS$_0^\lambda$}}\qquad\textrm{or}
\end{displaymath}
\begin{displaymath}
\underbrace{ x\cdot [(y\cdot zv)/v\cdot (xv)]=\{x\cdot [(y\cdot
xv)/v\cdot (x\backslash v)]\}/v \cdot
[z(xv)].}_{\textrm{OS$_1^\lambda$}}
\end{displaymath}
\end{myth}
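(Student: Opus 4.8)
The plan is to run the argument of Theorem~\ref{1:4} essentially verbatim, but with the \emph{arbitrary} principal isotope used there replaced by an \emph{arbitrary left} principal isotope. By the definition of a left principal isotopism the transforming triple has the shape $\alpha=(A,I,I)$; writing it in the $f,g$-principal form $\alpha=(R_v,L_u,I)$ this forces $L_u=I$, i.e. $u=e$. Hence a left principal isotope of $\mathcal{Q}=(Q,\cdot,\backslash,/)$ is the loop $\mathfrak{Q}=(Q,\vartriangle,\nwarrow,\nearrow)$ with
\begin{displaymath}
x\vartriangle y=xR_v^{-1}\cdot y=(x/v)\cdot y\qquad\forall\,v\in Q,
\end{displaymath}
which is exactly the isotope of Theorem~\ref{1:4} specialised at $u=e$. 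By Theorem~\ref{1:2.0.1} (whose hypothesis is furnished by Theorem~\ref{1:1.1}), $\mathcal{Q}$ is a left universal Osborn loop if and only if every such $\mathfrak{Q}$ is an Osborn loop, so it suffices to translate ``$\mathfrak{Q}$ obeys OS$_0$'' (respectively OS$_1$) into the base operations $(\cdot,/,\backslash)$.

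For the forward direction I would first read off the data of $\mathfrak{Q}$ from Theorem~\ref{1:4} at $u=e$: its identity element is $uv=v$, and its left inverse map $xJ_{\lambda'}=[(uv)/(u\backslash x)]v$ specialises to $(v/x)v$. I would then write out OS$_0$ for $\mathfrak{Q}$, namely
\begin{displaymath}
x\vartriangle[(y\vartriangle z)\vartriangle x]=\{x\vartriangle[(y\vartriangle x^{\lambda'})\vartriangle x]\}\vartriangle(z\vartriangle x),
\end{displaymath}
substitute $x\vartriangle y=(x/v)\cdot y$ and the formula for $x^{\lambda'}$ throughout, and note that the outcome is precisely equation~(\ref{eq:3}) of Theorem~\ref{1:4} evaluated at $u=e$. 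Finally the clean-up substitutions $x\mapsto x/v,\ y\mapsto y/v,\ z\mapsto zv$ (the $u=e$ specialisation of the replacements used there, with the now-trivial $z\mapsto u\backslash z$ upgraded to the renaming $z\mapsto zv$ needed to reach the stated form) should collapse the expression term by term to OS$_0^\lambda$.

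For the converse I would reverse this chain: assuming $\mathcal{Q}$ obeys OS$_0^\lambda$, undoing the three substitutions recovers equation~(\ref{eq:3}) for an \emph{arbitrary} $v\in Q$, which says exactly that every left principal isotope $\mathfrak{Q}$ satisfies OS$_0$ and is therefore an Osborn loop; Theorem~\ref{1:2.0.1} then upgrades this to left universality. The identity OS$_1^\lambda$ is obtained in the same fashion, starting from OS$_1$ and the right-inverse description of $E_x$ in place of the left-inverse one.

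The only genuine work, and hence the main obstacle, is the symbolic bookkeeping. Each application of $\vartriangle$ injects a factor $R_v^{-1}=\mathbb{R}_v$ on its left argument, and the nested left inverse $(v/x)v$ must be carried correctly through the three layers of products and quotients so that, after the substitutions, the stray $/v$ and $\backslash$ factors cancel to leave exactly the displayed right-hand side. I anticipate no conceptual difficulty beyond Theorem~\ref{1:4}; the care lies in verifying that the $u=e$ specialisation agrees with OS$_0^\lambda$ factor for factor, and in confirming (via the definition $\alpha=(A,I,I)$ together with Theorem~\ref{1:1.1}) that restricting to $u=e$ genuinely exhausts all left principal isotopes.
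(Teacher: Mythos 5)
Your proposal is correct and follows the paper's own route: the paper proves this theorem exactly by rerunning the argument of Theorem~\ref{1:4} with the arbitrary left principal isotope $x\vartriangle y=xR_v^{-1}\cdot y=(x/v)\cdot y$, i.e.\ the $u=e$ specialisation, with identity element $v$ and left inverse $xJ_{\lambda'}=(v/x)v$, followed by the renamings $x\mapsto x/v$, $y\mapsto y/v$, $z\mapsto z/v$. Your bookkeeping of the substitutions (including the observation that the $z$-substitution becomes $z\mapsto zv$ rather than the now-trivial $z\mapsto u\backslash z$) matches what is needed to land on OS$_0^\lambda$ and OS$_1^\lambda$ exactly.
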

\begin{proof}
The procedure of the proof of this theorem is similar to the
procedure used to prove Theorem~\ref{1:4} by just using the
arbitrary left principal isotope $\mathfrak{Q}=(Q, \vartriangle
,\nwarrow ,\nearrow )$ such that
\begin{displaymath}
x\vartriangle y=xR_v^{-1}\cdot y=(x/v)\cdot y~\forall~v\in Q.
\end{displaymath}
\end{proof}

\begin{mylem}\label{1:8.1}
Let $Q$ be a loop with multiplication group
$\mathcal{M}\textrm{ult}(Q)$. $Q$ is a left universal Osborn loop if
and only if the triple $\big(\alpha (x,v),\beta (x,v),\gamma
(x,v)\big)\in AUT(Q)$ or $\big(R_{[xv]}\mathbb{R}_vR_{[x\backslash
v]}\mathbb{R}_{[xv]}R_v\gamma (x,v)\mathbb{R}_v,\beta (x,v),\gamma
(x,v)\big)\in AUT(Q)$ for all $x,v\in Q$ where $\alpha
(x,v)=R_{([v/(xv)]v)}\mathbb{R}_vR_{[xv]}L_x\mathbb{R}_v,~\beta
(x,v)=\mathbb{R}_vR_{[xv]}$ and $\gamma
(x,v)=\mathbb{R}_vR_{[xv]}L_x$ are elements of
$\mathcal{M}\textrm{ult}(Q)$.
\end{mylem}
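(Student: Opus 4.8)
The plan is to read the two governing autotopisms straight off the defining identities OS$_0^\lambda$ and OS$_1^\lambda$ of Theorem~\ref{1:4.1}, in exactly the manner that Lemma~\ref{1:8} is extracted from OS$_0'$ and OS$_1'$ of Theorem~\ref{1:4}. By Theorem~\ref{1:4.1}, $Q$ is a left universal Osborn loop if and only if it obeys OS$_0^\lambda$, equivalently if and only if it obeys OS$_1^\lambda$. Hence it suffices to prove that OS$_0^\lambda$ is equivalent to the membership of the first triple $\big(\alpha(x,v),\beta(x,v),\gamma(x,v)\big)$ in $AUT(Q)$ for all $x,v\in Q$, and that OS$_1^\lambda$ is equivalent to the membership of the second triple; combining these two equivalences with Theorem~\ref{1:4.1} (so that satisfying either triple for all $x,v$ returns the single property of being a left universal Osborn loop) then yields the stated inclusive ``or''.

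First I would interpret each side of OS$_0^\lambda$ as a word produced by applying a string of translation maps to the loop variables. Since the translations act on the right (so that in a product of maps the leftmost is applied first), reading the nested operations of each side from the innermost outward recovers the operator word from left to right. On the left, the element $y\cdot zv$ is sent through $\mathbb{R}_v$ (the division $/v$), then $R_{[xv]}$, then $L_x$, so the left side equals $(y\cdot zv)\gamma(x,v)$ with $\gamma(x,v)=\mathbb{R}_vR_{[xv]}L_x$. On the right, $y$ is carried by $R_{([v/(xv)]v)}\mathbb{R}_vR_{[xv]}L_x\mathbb{R}_v=\alpha(x,v)$, while the trailing factor $[z\cdot xv]=zR_{[xv]}$ rewrites as $(zv)\mathbb{R}_vR_{[xv]}=(zv)\beta(x,v)$ with $\beta(x,v)=\mathbb{R}_vR_{[xv]}$. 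Putting $w=zv$, a bijection of $Q$ as $z$ runs over $Q$, identity OS$_0^\lambda$ reads $(y\cdot w)\gamma(x,v)=\big(y\alpha(x,v)\big)\big(w\beta(x,v)\big)$ for all $y,w\in Q$, which is precisely $\big(\alpha(x,v),\beta(x,v),\gamma(x,v)\big)\in AUT(Q)$. Each of these steps is reversible, giving the first equivalence.

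For OS$_1^\lambda$ I would argue identically: the left side is again $(y\cdot w)\gamma(x,v)$ and the trailing factor $[z(xv)]$ is again $w\beta(x,v)$ with $w=zv$, while the remaining factor carries $y$ through $R_{[xv]}\mathbb{R}_vR_{[x\backslash v]}L_x\mathbb{R}_v$. The only genuine manipulation is to recognise this operator as the first component of the second triple: substituting $\gamma(x,v)=\mathbb{R}_vR_{[xv]}L_x$ into $R_{[xv]}\mathbb{R}_vR_{[x\backslash v]}\mathbb{R}_{[xv]}R_v\gamma(x,v)\mathbb{R}_v$ and cancelling the adjacent inverse pairs $R_v\mathbb{R}_v=I$ and $\mathbb{R}_{[xv]}R_{[xv]}=I$ collapses it to exactly $R_{[xv]}\mathbb{R}_vR_{[x\backslash v]}L_x\mathbb{R}_v$. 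Thus OS$_1^\lambda$ is equivalent to the membership of the second triple in $AUT(Q)$, completing the argument.

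The main obstacle I anticipate is purely bookkeeping: keeping the composition order of the translations straight and spotting the substitution $w=zv$ that turns the mixed expression $y\cdot zv$ into a genuine product to which the autotopism equation applies, together with the two inverse cancellations that normalise the first component of the second triple. There is no deeper difficulty, since all three operators $\alpha(x,v),\beta(x,v),\gamma(x,v)$ are built from translations and their inverses and so lie in $\mathcal{M}\textrm{ult}(Q)$. As a consistency check, each of them coincides with the $u=e$ specialisation of the corresponding operator in Lemma~\ref{1:8}, so the present lemma is exactly the left-principal ($u=e$) restriction of that result.
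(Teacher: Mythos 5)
Your proposal is correct and follows exactly the route the paper intends: its proof of Lemma~\ref{1:8.1} is the one-line remark that the triples are read off from OS$_0^\lambda$ or OS$_1^\lambda$ of Theorem~\ref{1:4.1}, and your translation-operator bookkeeping (including the substitution $w=zv$ and the cancellation reducing the second triple's first component to $R_{[xv]}\mathbb{R}_vR_{[x\backslash v]}L_x\mathbb{R}_v$) is precisely the omitted detail, mirroring how Lemma~\ref{1:8} is obtained from OS$_0'$ and OS$_1'$.
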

\begin{proof}
This is obtained from identity OS$_0^\lambda$ or OS$_1^\lambda$ of
Theorem~\ref{1:4.1}.\end{proof}

\begin{myth}\label{1:9.1}
Let $Q$ be a loop with multiplication group
$\mathcal{M}\textrm{ult}(Q)$. If $Q$ is a left universal Osborn
loop, then the triple $\Big(\gamma (x,v)\mathbb{R}_{[v^\lambda\cdot
xv]},\beta (x,v),\gamma (x,v)\Big)\in AUT(Q)$ for all $x,v\in Q$
where $\beta (x,v)=\mathbb{R}_vR_{(xv)}$ and $\gamma
(x,v)=\mathbb{R}_vR_{(xv)}L_x$ are elements of
$\mathcal{M}\textrm{ult}(Q)$.
\end{myth}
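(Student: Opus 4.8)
The statement is the left-universal analogue of Theorem~\ref{1:9}, and the plan is to prove it in exactly the same way: take the autotopism furnished by Lemma~\ref{1:8.1} and specialise one of its free slots so that the first component collapses onto the third. Concretely, since $Q$ is a left universal Osborn loop, Lemma~\ref{1:8.1} (read off identity $\textrm{OS}_0^\lambda$ of Theorem~\ref{1:4.1}) gives $\big(\alpha(x,v),\beta(x,v),\gamma(x,v)\big)\in AUT(Q)$ for all $x,v\in Q$, which by the definition of an autotopism means $y\alpha(x,v)\cdot w\beta(x,v)=(yw)\gamma(x,v)$ for all $y,w\in Q$, where $\beta(x,v)=\mathbb{R}_vR_{(xv)}$ and $\gamma(x,v)=\mathbb{R}_vR_{(xv)}L_x$.

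The key step is to send the second autotopism slot to the identity $e$. I would evaluate the autotopism relation at $w=e$ directly; equivalently, one substitutes $z=v^\lambda$ in the displayed identity $\textrm{OS}_0^\lambda$, since the joint argument there is carried as $zv$ while the $\beta$-slot carries $z$ alone, so the slot variable is $w=zv$ and $w=e$ forces $z=v^\lambda$. This yields $y\alpha(x,v)\cdot e\beta(x,v)=y\gamma(x,v)$ for all $y\in Q$. The one computation needing care is the companion element: $e\beta(x,v)=e\mathbb{R}_vR_{(xv)}=(e/v)\cdot(xv)=v^\lambda\cdot xv$, using that $e/v=v^\lambda$ because $v^\lambda\cdot v=e$. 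Hence $y\alpha(x,v)=y\gamma(x,v)/(v^\lambda\cdot xv)=y\gamma(x,v)\mathbb{R}_{[v^\lambda\cdot xv]}$ for every $y$, so $\alpha(x,v)=\gamma(x,v)\mathbb{R}_{[v^\lambda\cdot xv]}$ as maps in $\mathcal{M}\textrm{ult}(Q)$.

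Substituting this expression for the first component back into the autotopism from Lemma~\ref{1:8.1} gives $\big(\gamma(x,v)\mathbb{R}_{[v^\lambda\cdot xv]},\beta(x,v),\gamma(x,v)\big)\in AUT(Q)$ for all $x,v\in Q$, which is precisely the assertion; note that, as in Theorem~\ref{1:9}, only the necessity ("if") direction is claimed, so no converse specialisation is required. The main obstacle is purely bookkeeping: one must keep the displayed variable of $\textrm{OS}_0^\lambda$ distinct from the true autotopism slot (they differ by a factor of $v$), and correctly read off the emerging companion $e\beta(x,v)$ as $v^\lambda\cdot xv$ via $e/v=v^\lambda$. Once those two points are pinned down, the argument is a one-line specialisation exactly parallel to the proof of Theorem~\ref{1:9}.
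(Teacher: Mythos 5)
Your proposal is correct and follows the same route as the paper: the paper proves this result by repeating for OS$_0^\lambda$ exactly what was done for OS$_0'$ in Theorem~\ref{1:9}, namely specialising the second autotopism slot to $e$ (which, because that slot carries $zv$ rather than $z$, means $z=v^\lambda$), reading off the companion $e\beta(x,v)=(e/v)(xv)=v^\lambda\cdot xv$, and concluding $\alpha(x,v)=\gamma(x,v)\mathbb{R}_{[v^\lambda\cdot xv]}$. Your bookkeeping of the $zv$ versus $z$ discrepancy and of $e/v=v^\lambda$ is exactly the point that needs care, and you have it right.
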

\begin{proof}
This follows by using identity OS$_0^\lambda$ or OS$_1^\lambda$ of
Theorem~\ref{1:4.1} the way identity OS$_0'$ or OS$_1'$ of
Theorem~\ref{1:4} was used to prove Theorem~\ref{1:9}.\end{proof}

\begin{mylem}\label{1:9.1a}
Let $(Q, \cdot ,\backslash ,/)$  be a left universal Osborn loop.
The following identities are satisfied:
\begin{gather*}
\underbrace{ y\{[v/(xv)]v\}=\{[y(xv)]/v\cdot (x\backslash
v)\}/(xv)\cdot
v}_{\textrm{OSI$_{01}^\lambda$}},~\underbrace{z\{(yv\cdot zv)/v\cdot
z\backslash v\}=[z(y\cdot
zv)]/(v^\lambda\cdot zv)\cdot v}_{\textrm{OSI$_{01.2}^\lambda$}}~\textrm{and}\\
\underbrace{\{z\cdot \{[(yv)(v/(zv)\cdot v)]/v\cdot zv\}\}/v\cdot
v^\lambda (zv)=z\cdot
y(vz)}_{\textrm{OSI$_{01.1}^\lambda$}}.\\
\textrm{Furthermore},~\underbrace{\{v^\lambda\{[(yv)(vv)]/v\}\}/v\cdot
v^\lambda=v^\lambda y}_{\textrm{OSI$_{01.1.1}^\lambda$}},~
\underbrace{\{z\{[v(v/(zv)\cdot v)]z\}\}/v\cdot v^\lambda
(zv)=z\cdot
zv}_{\textrm{OSI$_{01.1.2}^\lambda$}},\\\underbrace{v\{(yv\cdot
vv)/v\}=[v(y\cdot vv)]/(v^\lambda\cdot vv)\cdot
v}_{\textrm{OSI$_{01.2.1}^\lambda$}},~\underbrace{v[(v\cdot
vv)/v]=(v\cdot vv)/(v^\lambda\cdot vv)\cdot
v}_{\textrm{OSI$_{01.2.2}^\lambda$}},\\
\underbrace{v[(vv\cdot vv)/v]=[v(v\cdot vv)]/(v^\lambda\cdot
vv)\cdot
v}_{\textrm{OSI$_{01.2.3}^\lambda$}},~\underbrace{v^\lambda[y\cdot
v^\lambda\backslash v]=(v^\lambda y)/v^\lambda\cdot
v}_{\textrm{OSI$_{01.2.4}^\lambda$}},\\
v\cdot vv=v^\lambda\backslash v\cdot v~\textrm{and}~vv\cdot
vv=v^\lambda\backslash (v^{\lambda^2}v)\cdot v
\end{gather*}
are also satisfied.
\end{mylem}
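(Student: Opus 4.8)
The plan is to run the exact parallel of the proof of Lemma~\ref{1:9a}, but fed by the left-universal data: the two autotopisms of Lemma~\ref{1:8.1} and the autotopism of Theorem~\ref{1:9.1}. All three of these triples share the same second component $\beta(x,v)=\mathbb{R}_vR_{[xv]}$ and the same third component $\gamma(x,v)=\mathbb{R}_vR_{[xv]}L_x$, so, since in a quasigroup any two components of an autotopism determine the third uniquely, their first components must coincide. This yields the master equation
\begin{displaymath}
\alpha(x,v)=\gamma(x,v)\mathbb{R}_{[v^\lambda\cdot xv]}=R_{[xv]}\mathbb{R}_vR_{[x\backslash v]}\mathbb{R}_{[xv]}R_v\gamma(x,v)\mathbb{R}_v,
\end{displaymath}
in which $\alpha(x,v)=R_{([v/(xv)]v)}\mathbb{R}_vR_{[xv]}L_x\mathbb{R}_v=R_{([v/(xv)]v)}\gamma(x,v)\mathbb{R}_v$. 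Reading the three pairwise equalities as maps applied to an arbitrary $y\in Q$ produces the three base identities OSI$_{01}^\lambda$, OSI$_{01.1}^\lambda$, OSI$_{01.2}^\lambda$; every remaining identity is a specialisation of one of these.

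For the base identities I would proceed as follows. Cancelling the common right factor $\gamma(x,v)\mathbb{R}_v$ in the equality of the first and third forms leaves $R_{([v/(xv)]v)}=R_{[xv]}\mathbb{R}_vR_{[x\backslash v]}\mathbb{R}_{[xv]}R_v$; applying both sides to $y$ and translating each elementary factor ($R_a$ as right multiplication by $a$, $\mathbb{R}_a$ as right division by $a$) gives OSI$_{01}^\lambda$. For OSI$_{01.1}^\lambda$ I would apply $\alpha(x,v)=\gamma(x,v)\mathbb{R}_{[v^\lambda\cdot xv]}$ to $y$, replace $y/v$ by $y$ (equivalently send $y\mapsto yv$), post-multiply both sides by $v^\lambda\cdot xv$, and rename $x$ as $z$. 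For OSI$_{01.2}^\lambda$ I would apply the equality of the third and second forms to $y$, substitute the value of $\gamma(x,v)$, post-multiply by $v$ (which collapses the factor $([\cdots]/(xv))(xv)$), rename $x$ as $z$, and finally send $y\mapsto yv$.

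The remaining identities are pure specialisations, obtained in this order: OSI$_{01.1.1}^\lambda$ by putting $z=v^\lambda$ in OSI$_{01.1}^\lambda$ (so $zv=e$ and $v/(zv)=v$); OSI$_{01.1.2}^\lambda$ by putting $y=e$ in OSI$_{01.1}^\lambda$; OSI$_{01.2.1}^\lambda$ by putting $z=v$ in OSI$_{01.2}^\lambda$ (so $z\backslash v=e$); OSI$_{01.2.2}^\lambda$ and OSI$_{01.2.3}^\lambda$ by putting $y=e$ and $y=v$ respectively in OSI$_{01.2.1}^\lambda$; and OSI$_{01.2.4}^\lambda$ by putting $z=v^\lambda$ in OSI$_{01.2}^\lambda$. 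For the last two displayed identities I would return to OSI$_{01.1.1}^\lambda$, namely $\{v^\lambda\{[(yv)(vv)]/v\}\}/v\cdot v^\lambda=v^\lambda y$: setting $y=e$ gives $\{v^\lambda[(v\cdot vv)/v]\}/v\cdot v^\lambda=v^\lambda$, and cancelling $v^\lambda$ on the right and then $v$ forces $v^\lambda\cdot[(v\cdot vv)/v]=v$, i.e. $(v\cdot vv)/v=v^\lambda\backslash v$, which is $v\cdot vv=v^\lambda\backslash v\cdot v$; setting $y=v$ and cancelling in the same way (using $e/v^\lambda=v^{\lambda^2}$) gives $vv\cdot vv=v^\lambda\backslash(v^{\lambda^2}v)\cdot v$.

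I expect the only genuine difficulty to be clerical: faithfully turning each composite of translations and divisions applied to $y$ into a correctly bracketed word, and keeping the repeated substitutions $y\mapsto yv$, $z=v^\lambda$, $z=v$ straight. There is no new conceptual content beyond Lemma~\ref{1:9a}. The one structural point worth recording is that the relevant left principal isotope $x\vartriangle y=(x/v)\cdot y$ of Theorem~\ref{1:4.1} has identity element $v$ and left inverse $w^{\lambda'}=(v/w)v$; this is precisely the $u=e$ degeneration of the two-parameter computation, so each primed identity here is the specialisation at $u=e$ of its counterpart in Lemma~\ref{1:9a}, which provides a useful independent check at every step.
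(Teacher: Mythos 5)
Your proposal is correct and follows essentially the same route as the paper's own proof: equating the first components of the three autotopisms from Lemma~\ref{1:8.1} and Theorem~\ref{1:9.1} (all sharing $\beta$ and $\gamma$), reading the resulting operator identities on an arbitrary $y$, and then making exactly the same specialisations ($z=v^\lambda$, $y=e$, $z=v$, $y=v$, etc.) in the same order, including deriving the final two identities from OSI$_{01.1.1}^\lambda$ at $y=e$ and $y=v$. Your closing remark that everything is the $u=e$ degeneration of Lemma~\ref{1:9a} is a useful cross-check the paper does not state explicitly, but it does not change the argument.
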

\begin{proof}
To prove these identities, we shall make use of the three
autotopisms in Lemma~\ref{1:8.1} and Theorem~\ref{1:9.1}. In a
quasigroup, any two components of an autotopism uniquely determine
the third. So equating the first components of the three
autotopisms, it is easy to see that
\begin{displaymath}
\alpha (x,v)=\gamma (x,v)\mathbb{R}_{[v^\lambda \cdot
xv]}=R_{[xv]}\mathbb{R}_vR_{[x\backslash
v]}\mathbb{R}_{[xv]}R_v\gamma (x,v)\mathbb{R}_v.
\end{displaymath}
The establishment of the identities OSI$_{01}^\lambda$,
OSI$_{01.1}^\lambda$ and OSI$_{01.2}^\lambda$ follows by using the
bijections appropriately to map an arbitrary element $y\in Q$ as
follows:
\begin{description}
\item[OSI$_{01}^\lambda$]
\begin{displaymath}
\alpha (x,v)=R_{[xv]}\mathbb{R}_vR_{[x\backslash
v]}\mathbb{R}_{[xv]}R_v\gamma (x,v)\mathbb{R}_v~\textrm{implies
that}
\end{displaymath}
\begin{displaymath}
R_{([v/(xv)]v)}\mathbb{R}_vR_{[xv]}L_x\mathbb{R}_v=R_{([v/(xv)]v)}\gamma
(x,v)\mathbb{R}_v=R_{[xv]}\mathbb{R}_vR_{[x\backslash
v]}\mathbb{R}_{[xv]}R_v\gamma (x,v)\mathbb{R}_v
\end{displaymath}
\begin{displaymath}
\textrm{which
gives}~R_{([v/(xv)]v)}=R_{[xv]}\mathbb{R}_vR_{[x\backslash
v]}\mathbb{R}_{[xv]}R_v.
\end{displaymath}
\begin{displaymath}
\textrm{So, for any $y\in
Q$,}~yR_{([v/(xv)]v)}=yR_{[xv]}\mathbb{R}_vR_{[x\backslash
v]}\mathbb{R}_{[xv]}R_v~\textrm{implies that}
\end{displaymath}
\begin{displaymath}
y\{[v/(xv)]v\}=\{[y(xv)]/v\cdot (x\backslash v)\}/(xv)\cdot v
\end{displaymath}
\item[OSI$_{01.1}^\lambda$] Consider
\begin{displaymath}
\alpha (x,v)=\gamma (x,v)\mathbb{R}_{[v^\lambda \cdot
xv]},~\textrm{then for all $y\in Q$,}
\end{displaymath}
\begin{displaymath}
y\alpha
(x,v)=yR_{([v/(xv)]v)}\mathbb{R}_vR_{[xv]}L_x\mathbb{R}_v=y\gamma
(x,v)\mathbb{R}_{[v^\lambda \cdot
xv]}=y\mathbb{R}_vR_{(xv)}L_x\mathbb{R}_{[v^\lambda \cdot xv]}.
\end{displaymath}
\begin{displaymath}
\textrm{Consequently,}~\{x\cdot (\{y([v/(xv)]v)\}/v\cdot
xv)\}/v=\{x\cdot (y/v\cdot xv)\}/[v^\lambda\cdot xv].
\end{displaymath}
Now replace $y/v$ by $y$ and post-multiply both sides by
$[v^\lambda\cdot xv]$ to get
\begin{displaymath}
\{x\cdot (\{(yv)([v/(xv)]v)\}/v\cdot xv)\}/v\cdot [v^\lambda\cdot
xv]=\{x\cdot (y\cdot xv)\}.
\end{displaymath}
\item[OSI$_{01.2}^\lambda$] Consider
\begin{displaymath}
R_{[xv]}\mathbb{R}_vR_{[x\backslash v]}\mathbb{R}_{[xv]}R_v\gamma
(x,v)\mathbb{R}_v=\gamma (x,v)\mathbb{R}_{[v^\lambda \cdot
xv]},~\textrm{then for all $y\in Q$,}
\end{displaymath}
\begin{displaymath}
yR_{[xv]}\mathbb{R}_vR_{[x\backslash v]}\mathbb{R}_{[xv]}R_v\gamma
(x,v)\mathbb{R}_v=y\gamma (x,v)\mathbb{R}_{[v^\lambda \cdot
xv]}~\textrm{results in}
\end{displaymath}
\begin{displaymath}
(\{\big[[y(xv)]/v\cdot (x\backslash v)\big]/(xv)\cdot v\}\gamma
(x,v))/v=\big(y\gamma (x,v)\big)/[v^\lambda \cdot xv]
\end{displaymath}
which is equivalent to the equation below after substituting the
value of $\gamma (x,v)$ and post-multiply both sides by $v$:
\begin{displaymath}
x\{[y(xv)]/v\cdot (x\backslash v)\}=(x\cdot [y/v\cdot
(xv)])/[v^\lambda \cdot xv]\cdot v.
\end{displaymath}
\begin{displaymath}
\textrm{Now, replace $y$ by $yv$ to get}~x\{[(yz)(xv)]/v\cdot
(x\backslash v)\}=(x[y\cdot (xv)])/[v^\lambda \cdot xv]\cdot v.
\end{displaymath}
\end{description}
Identities OSI$_{01.1.1}^\lambda$ and OSI$_{01.1.2}^\lambda$ are
deduced from identity OSI$_{01.1}^\lambda$. Identities
OSI$_{01.2.1}^\lambda$ and OSI$_{01.2.4}^\lambda$ are deduced from
identity OSI$_{01.2}^\lambda$ while identities
OSI$_{01.2.2}^\lambda$ and OSI$_{01.2.3}^\lambda$ are deduced from
identity OSI$_{01.2.1}^\lambda$. The other identities are gotten
from OSI$_{01.1.1}^\lambda$.
\begin{description}
\item[OSI$_{01.1.1}^\lambda$] Simply put $z=v^\lambda$ in identity OSI$_{01.1}^\lambda$ to
get identity OSI$_{01.1.1}^\lambda$.
\item[OSI$_{01.1.2}^\lambda$] Simply put $y=e$ in identity OSI$_{01.1}^\lambda$ to
get identity OSI$_{01.1.2}^\lambda$.
\item[OSI$_{01.2.1}^\lambda$] Simply put $z=v$ in identity OSI$_{01.2}^\lambda$ to
get identity OSI$_{01.2.1}^\lambda$.
\item[OSI$_{01.2.2}^\lambda$] Substitute $y=e$ in identity OSI$_{01.2.1}^\lambda$ to
get identity OSI$_{01.2.2}^\lambda$.
\item[OSI$_{01.2.3}^\lambda$] Substitute $y=v$ in identity OSI$_{01.2.1}^\lambda$ to
get identity OSI$_{01.2.3}^\lambda$.
\item[OSI$_{01.2.4}^\lambda$] Simply put $z=v^\lambda$ in identity OSI$_{01.2}^\lambda$ to
get identity OSI$_{01.2.4}^\lambda$.
\end{description}
By putting $y=e$ in identity OSI$_{01.1.1}^\lambda$, we have
$\{v^\lambda\{[v(vv)]/v\}\}/v\cdot v^\lambda=v^\lambda $ which
implies $v^\lambda\{[v(vv)]/v\}=v$, hence,
$v(vv)=(v^\lambda\backslash v)\cdot v$.

Again, by putting $y=v$ in identity OSI$_{01.1.1}^\lambda$, we have
$\{v^\lambda\{[(vv)(vv)]/v\}\}/v\cdot v^\lambda=e$ which implies
$v^\lambda\{[(vv)(vv)]/v\}=v^{\lambda^2}v$, hence, $vv\cdot
vv=v^\lambda\backslash (v^{\lambda^2}v)\cdot v$.\end{proof}

\begin{mylem}\label{1:9.1b}
A left universal Osborn loop is a LSIPL if and only if it is a 3
PAPL.
\end{mylem}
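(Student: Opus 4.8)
The plan is to reduce everything to the single identity $v\cdot vv=(v^\lambda\backslash v)\cdot v$ already established in Lemma~\ref{1:9.1a} for any left universal Osborn loop. The key observation is that the LSIPL condition $v^\lambda\cdot vv=v$ can be restated purely in terms of left division: by the defining property $a\backslash b=c\Longleftrightarrow a\cdot c=b$, the equation $v^\lambda\cdot vv=v$ holds if and only if $vv=v^\lambda\backslash v$. Thus the whole lemma becomes a matter of comparing the restated condition $vv=v^\lambda\backslash v$ with the 3-PAPL identity $vv\cdot v=v\cdot vv$, using the bridging identity $v\cdot vv=(v^\lambda\backslash v)\cdot v$ to connect them.

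For the forward direction I would assume $Q$ is an LSIPL, so that $v^\lambda\backslash v=vv$ for every $v\in Q$. Substituting this equality into the right-hand side of the bridging identity immediately yields $v\cdot vv=vv\cdot v$, which is precisely the 3-PAPL identity; hence $Q$ is a 3-PAPL.

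For the converse I would assume $Q$ is a 3-PAPL, so that $v\cdot vv=vv\cdot v$ for every $v\in Q$. Combining this with the bridging identity $v\cdot vv=(v^\lambda\backslash v)\cdot v$ gives $(v^\lambda\backslash v)\cdot v=vv\cdot v$. Since the right translation $R_v$ is a bijection in a loop, I can cancel $v$ on the right to obtain $v^\lambda\backslash v=vv$, which by the reformulation above is the LSIPL condition $v^\lambda\cdot vv=v$; hence $Q$ is an LSIPL.

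There is no serious obstacle here: once the bridging identity of Lemma~\ref{1:9.1a} is in hand, the entire argument is a single substitution in one direction and a single right cancellation in the other. The only point requiring a moment's care is the reformulation of the LSIPL axiom as $vv=v^\lambda\backslash v$, which is justified directly by the definition of the left-division operation; every other step is elementary loop cancellation.
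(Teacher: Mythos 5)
Your proof is correct and takes exactly the route the paper intends: the paper's proof consists of the single remark that the lemma follows from the identity $v\cdot vv=(v^\lambda\backslash v)\cdot v$ of Lemma~\ref{1:9.1a}, which is precisely your bridging identity. Your reformulation of the LSIPL axiom as $vv=v^\lambda\backslash v$ and the right cancellation of $v$ in the converse are just the routine steps the paper leaves implicit.
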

\begin{proof}
This is proved by using the identity $v\cdot vv=v^\lambda\backslash
v\cdot v$ in Lemma~\ref{1:9.1a}.\end{proof}

\begin{mylem}\label{1:9.1c}
A left universal Osborn loop $(Q, \cdot ,\backslash ,/)$ is a
{\Large 4}$_{11\cdot 11=(11\cdot 1)1}^{1}$ loop if and only if it
obeys the identity $v^\lambda (vv\cdot v)=v^{\lambda^2}v$.
\end{mylem}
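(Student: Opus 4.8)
The plan is to decode the word-theoretic label and then play it off against an identity already manufactured in Lemma~\ref{1:9.1a}. Unwinding the notation of the definition, a loop is a {\Large 4}$_{11\cdot 11=(11\cdot 1)1}^{1}$ loop precisely when it satisfies
\begin{displaymath}
vv\cdot vv=(vv\cdot v)v\qquad\textrm{for all }v\in Q.
\end{displaymath}
On the other hand, Lemma~\ref{1:9.1a} already guarantees that every left universal Osborn loop obeys $vv\cdot vv=[v^\lambda\backslash (v^{\lambda^2}v)]\cdot v$. I would take this second identity as the backbone of the whole argument, treating $v^{\lambda^2}$ simply as the (fixed) left inverse of $v^\lambda$ without needing to expand it.

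For the forward direction I would assume the {\Large 4}$_{11\cdot 11=(11\cdot 1)1}^{1}$ identity $vv\cdot vv=(vv\cdot v)v$ and combine it with the backbone identity to get $(vv\cdot v)v=[v^\lambda\backslash (v^{\lambda^2}v)]\cdot v$. Since right translation $R_v$ is a bijection in a loop, I can cancel the common trailing factor $v$ and obtain $vv\cdot v=v^\lambda\backslash (v^{\lambda^2}v)$. The defining property of left division, namely $a\backslash b=c\Longleftrightarrow a\cdot c=b$, then converts this directly into $v^\lambda (vv\cdot v)=v^{\lambda^2}v$, which is the required identity.

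The converse runs the same chain backwards. Starting from $v^\lambda (vv\cdot v)=v^{\lambda^2}v$, the definition of $\backslash$ gives $vv\cdot v=v^\lambda\backslash (v^{\lambda^2}v)$; post-multiplying both sides by $v$ (applying $R_v$) produces $(vv\cdot v)v=[v^\lambda\backslash (v^{\lambda^2}v)]\cdot v$, whose right-hand side equals $vv\cdot vv$ by Lemma~\ref{1:9.1a}. Hence $vv\cdot vv=(vv\cdot v)v$, the {\Large 4}$_{11\cdot 11=(11\cdot 1)1}^{1}$ property. Because every step is an equivalence, the two conditions are interchangeable.

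I do not expect a genuine obstacle here: essentially all the work has been pre-packaged into the expansion $vv\cdot vv=[v^\lambda\backslash (v^{\lambda^2}v)]\cdot v$ supplied by Lemma~\ref{1:9.1a}. The only points that demand care are, first, translating the word label {\Large 4}$_{11\cdot 11=(11\cdot 1)1}^{1}$ correctly into the plain identity $vv\cdot vv=(vv\cdot v)v$, and second, justifying the cancellation of the trailing factor $v$ — which is legitimate exactly because $R_v$ is a permutation of $Q$. It is this bijectivity, together with the definition of $\backslash$, that makes both directions of the equivalence go through.
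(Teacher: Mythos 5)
Your proposal is correct and follows exactly the route the paper intends: the paper's proof is the one-line instruction ``use the identity $vv\cdot vv=v^\lambda\backslash (v^{\lambda^2}v)\cdot v$ of Lemma~\ref{1:9.1a}'', and your argument simply fills in the details of that reduction (decoding the label as $vv\cdot vv=(vv\cdot v)v$, cancelling the right factor $v$ via bijectivity of $R_v$, and converting with the definition of $\backslash$), with every step reversible as required for the equivalence.
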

\begin{proof}
This is proved by using the identity $vv\cdot vv=v^\lambda\backslash
(v^{\lambda^2}v)\cdot v$ in Lemma~\ref{1:9.1a}.\end{proof}

\begin{mycor}\label{1:9.1ci}
A left universal Osborn loop $(Q, \cdot ,\backslash ,/)$ that is a
{\Large 4}$_{11\cdot 11=(11\cdot 1)1}^{1}$ loop is a L${}^1$BSIPL if
and only if it is a LSIPL. Hence, it is a L${}^2$BSIPL.
\end{mycor}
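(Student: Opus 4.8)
The plan is to collapse both properties onto the single identity furnished by Lemma~\ref{1:9.1c} and then cancel. Because $Q$ is a {\Large 4}$_{11\cdot 11=(11\cdot 1)1}^{1}$ loop, Lemma~\ref{1:9.1c} supplies
\begin{displaymath}
v^\lambda (vv\cdot v)=v^{\lambda^2}v\qquad\textrm{for all } v\in Q.
\end{displaymath}
The left side here is precisely the left member of the L${}^1$BSIPL identity $v^\lambda(vv\cdot v)=vv$, so I would immediately read off that $Q$ is a L${}^1$BSIPL if and only if $vv=v^{\lambda^2}v$ holds for every $v$.

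First I would cancel on the right. Since $R_v$ is a bijection of the loop $Q$, the equation $v\cdot v=v^{\lambda^2}\cdot v$ forces $v=v^{\lambda^2}$. I would then rewrite this using the relation $v^{\lambda^2}=v^\lambda\cdot vv$ (the fact already invoked in the proof of Lemma~\ref{1:9a}); it turns $v=v^{\lambda^2}$ into $v^\lambda\cdot vv=v$, which is exactly the LSIPL identity. Each step is reversible, so $Q$ is a L${}^1$BSIPL if and only if it is a LSIPL, which is the asserted equivalence.

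To obtain the closing claim I would push the LSIPL conclusion through Lemma~\ref{1:9.1b}, which states that a left universal Osborn loop is a LSIPL exactly when it is a 3-PAPL. Hence $vv\cdot v=v\cdot vv$, and substituting this into the now-valid L${}^1$BSIPL identity $v^\lambda(vv\cdot v)=vv$ gives $v^\lambda(v\cdot vv)=vv$, i.e. $Q$ is a L${}^2$BSIPL.

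I expect no serious obstacle: the real content is packaged inside Lemmas~\ref{1:9.1c} and~\ref{1:9.1b}, and the only genuine manipulation is the right cancellation turning $vv=v^{\lambda^2}v$ into $v=v^{\lambda^2}$. The single point warranting care is the passage $v=v^{\lambda^2}\Leftrightarrow v^\lambda\cdot vv=v$, which leans on $v^{\lambda^2}=v^\lambda\cdot vv$ being available in this class of loops rather than in an arbitrary loop; once that is granted, the argument is purely formal.
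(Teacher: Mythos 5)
Your proposal is correct and follows essentially the same route as the paper: the paper's proof is a one-line appeal to Lemma~\ref{1:9.1c} together with the Osborn-loop fact $x^{\lambda^2}=x^\lambda\cdot xx$, and your expansion (right-cancelling $vv=v^{\lambda^2}v$ to $v=v^{\lambda^2}$, then translating via $v^{\lambda^2}=v^\lambda\cdot vv$) is exactly the intended argument. Your use of Lemma~\ref{1:9.1b} to convert LSIPL into 3-PAPL for the final L${}^2$BSIPL claim is also the natural (and implicitly required) step.
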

\begin{proof}
This follows from Lemma~\ref{1:9.1c} by using the fact that in an
Osborn loop, $x^{\lambda^2}=x\mapsto x^\lambda\cdot xx$.\end{proof}

\begin{mylem}\label{1:9.1d}
A left universal Osborn loop is a LSIPL if and only if it is a
{\Large 4}$_{12\cdot 22=(1\cdot 22)2}^{1,3}$ loop.
\end{mylem}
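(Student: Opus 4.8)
The plan is to decode the {\Large 4}$_{12\cdot 22=(1\cdot 22)2}^{1,3}$ loop property and then pit it against one of the one-variable consequences already extracted in Lemma~\ref{1:9.1a}. Reading the notation with $q_1\mapsto y$ and $q_2\mapsto v$, the property is exactly the identity $yv\cdot vv=(y\cdot vv)v$ for all $y,v\in Q$, the superscript $1,3$ merely recording that $y$ occurs once and $v$ three times. The tool I would use is identity OSI$_{01.2.1}^\lambda$ of Lemma~\ref{1:9.1a}, namely $v\{(yv\cdot vv)/v\}=[v(y\cdot vv)]/(v^\lambda\cdot vv)\cdot v$, together with the Osborn identity $v^\lambda\cdot vv=v^{\lambda^2}$, which rewrites its right-hand side as $[v(y\cdot vv)]/v^{\lambda^2}\cdot v$. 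I would also record that LSIPL is equivalent to $v^{\lambda^2}=v$, since LSIPL reads $v^\lambda\cdot vv=v$ and $v^\lambda\cdot vv=v^{\lambda^2}$.

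The heart of the argument is that each side of OSI$_{01.2.1}^\lambda$ collapses to the single element $v(y\cdot vv)$ under exactly one of the two hypotheses. On the one hand, the {\Large 4}$_{12\cdot 22=(1\cdot 22)2}^{1,3}$ identity is equivalent, by right-dividing by $v$, to $(yv\cdot vv)/v=y\cdot vv$, which turns the left-hand side of OSI$_{01.2.1}^\lambda$ into $v(y\cdot vv)$. On the other hand, $v^{\lambda^2}=v$ turns the right-hand side into $[v(y\cdot vv)]/v\cdot v=v(y\cdot vv)$. Since OSI$_{01.2.1}^\lambda$ forces the two sides to agree for all $y,v$, each collapse compels the other, and this is the mechanism behind the equivalence.

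To organise this into the two implications I would proceed as follows. Assuming LSIPL, the right-hand side is $v(y\cdot vv)$, so OSI$_{01.2.1}^\lambda$ gives $v\{(yv\cdot vv)/v\}=v(y\cdot vv)$; cancelling the left factor $v$ and right-multiplying by $v$ then delivers $yv\cdot vv=(y\cdot vv)v$, the desired identity. Conversely, assuming the {\Large 4}$_{12\cdot 22=(1\cdot 22)2}^{1,3}$ identity, the left-hand side is $v(y\cdot vv)$, so OSI$_{01.2.1}^\lambda$ forces $v(y\cdot vv)=[v(y\cdot vv)]/v^{\lambda^2}\cdot v$ for all $y,v$. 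For fixed $v$ the map $y\mapsto v(y\cdot vv)$ is a bijection of $Q$, so this relation holds with $v(y\cdot vv)$ replaced by an arbitrary $X\in Q$; writing $X=Zv^{\lambda^2}$ and simplifying $X/v^{\lambda^2}\cdot v$ yields $Zv=Zv^{\lambda^2}$ for every $Z$, hence $v=v^{\lambda^2}$, i.e. LSIPL.

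The step I expect to be the main obstacle is exactly this removal of the quantified $y$ in the forward direction: both hypotheses are one-variable statements, whereas OSI$_{01.2.1}^\lambda$ is a two-variable identity, so passing from ``the relation holds on every element $v(y\cdot vv)$'' to the clean conclusion $v=v^{\lambda^2}$ hinges on the surjectivity of $y\mapsto v(y\cdot vv)$ and a careful application of the quasigroup cancellation laws. Beyond that, the computation is just the same right-division and right-multiplication bookkeeping already used in Lemmas~\ref{1:9.1b} and~\ref{1:9.1c}.
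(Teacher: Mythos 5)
Your proof is correct and takes essentially the same route as the paper, whose entire proof is the citation of identity OSI$_{01.2.1}^\lambda$ of Lemma~\ref{1:9.1a}; you have simply filled in the computation that citation gestures at, including the surjectivity of $y\mapsto v(y\cdot vv)$ needed in the converse direction. (The detour through $v^{\lambda^2}$ is harmless but avoidable: from $X=X/(v^\lambda\cdot vv)\cdot v$ for all $X$, putting $X=Z(v^\lambda\cdot vv)$ and cancelling $Z$ gives $v^\lambda\cdot vv=v$ directly.)
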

\begin{proof}
This is proved by using the identity OSI$_{01.2.1}^\lambda$ of
Lemma~\ref{1:9.1a}.\end{proof}

\begin{mylem}\label{1:9.1e}
A left universal Osborn loop is a LSIPL if and only if it is a
{\Large 4}$_{11\cdot 11=(1\cdot 11)1}^{1}$ loop.
\end{mylem}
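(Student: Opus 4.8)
The plan is to read both implications off a single identity from Lemma~\ref{1:9.1a}, namely OSI$_{01.2.3}^\lambda$,
\begin{displaymath}
v[(vv\cdot vv)/v]=[v(v\cdot vv)]/(v^\lambda\cdot vv)\cdot v,
\end{displaymath}
which holds in every left universal Osborn loop. The point is that this identity simultaneously displays the word $vv\cdot vv$ governing the {\Large 4}$_{11\cdot 11=(1\cdot 11)1}^{1}$ property and the expression $v^\lambda\cdot vv$ whose coincidence with $v$ is the LSIPL condition, so each property will drop out by a single cancellation.

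For the forward direction I would assume the LSIPL, $v^\lambda\cdot vv=v$. Then the right-hand side of OSI$_{01.2.3}^\lambda$ becomes $[v(v\cdot vv)]/v\cdot v$, which equals $v(v\cdot vv)$ by the loop axiom $(X/v)\cdot v=X$. Equating with the left-hand side and cancelling the left translation $L_v$ gives $(vv\cdot vv)/v=v\cdot vv$, hence $vv\cdot vv=(v\cdot vv)v$, the {\Large 4}$_{11\cdot 11=(1\cdot 11)1}^{1}$ identity.

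For the converse I would assume $vv\cdot vv=(v\cdot vv)v$. Right-cancelling $v$ yields $(vv\cdot vv)/v=v\cdot vv$, so the left-hand side of OSI$_{01.2.3}^\lambda$ becomes $v(v\cdot vv)$. Abbreviating $A=v(v\cdot vv)$ and $b=v^\lambda\cdot vv$, the identity collapses to $A=(A/b)\cdot v$; since $(A/b)\cdot b=A$ by the loop axioms, left-cancelling $A/b$ from $(A/b)\cdot b=(A/b)\cdot v$ forces $b=v$, i.e. $v^\lambda\cdot vv=v$, the LSIPL. No real obstacle is expected: the whole content sits in OSI$_{01.2.3}^\lambda$, and the only care needed is tracking the operation priorities, with $/$ binding tighter than $\cdot$, so that each cancellation is applied to the correct factor.
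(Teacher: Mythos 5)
Your proof is correct and follows exactly the route the paper intends: the paper's own proof of this lemma consists of the single remark that it follows from identity OSI$_{01.2.3}^\lambda$ of Lemma~\ref{1:9.1a}, and your two cancellation arguments (using $(y/x)\cdot x=y$ for the forward direction and left cancellation of $A/b$ for the converse) are precisely the details being elided. The only thing worth noting is that you have supplied the verification the paper omits, with the operation priorities handled correctly.
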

\begin{proof}
This is proved by using the identity OSI$_{01.2.3}^\lambda$ of
Lemma~\ref{1:9.1a}.\end{proof}

\begin{mylem}\label{1:9.1f}
Let $G$ be a left universal Osborn loop. The following are
equivalent.
\begin{enumerate}
\item $G$ is a LSIPL and a {\Large 4}$_{12\cdot 22=(12\cdot 2)2}^{1,3}$ loop.
\item $G$ is a left alternative property loop.
\item $G$ is a Moufang loop.
\end{enumerate}
\end{mylem}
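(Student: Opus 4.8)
\emph{Plan.} The plan is to first decode the {\Large 4}$_{12\cdot 22=(12\cdot 2)2}^{1,3}$ condition, then dispose of the cheap implications, and finally concentrate on the single genuine obstacle: upgrading a one-sided alternative law to the full Moufang identity inside a \emph{left} universal Osborn loop. First I would observe that the identity defining a {\Large 4}$_{12\cdot 22=(12\cdot 2)2}^{1,3}$ loop is $(xy)(yy)=((xy)y)y$, and since $R_y$ is a bijection the product $xy$ ranges over all of $G$ as $x$ does; hence this identity is equivalent to the right alternative law $a\cdot yy=(ay)y$ for all $a,y\in G$. So condition (1) reads ``LSIPL together with the right alternative property (RAP)''. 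Moreover each one-sided alternative law forces $3$ power associativity: putting $y=x$ in $xx\cdot y=x\cdot xy$ (respectively in $x\cdot yy=xy\cdot y$) gives $xx\cdot x=x\cdot xx$, whence by Lemma~\ref{1:9.1b} the loop is a LSIPL. Thus in a left universal Osborn loop both LAP and RAP already entail LSIPL, so the LSIPL clause of (1) is redundant and (1) is effectively ``RAP''. I would prove the theorem as the cycle $(1)\Rightarrow(2)\Rightarrow(3)\Rightarrow(1)$.

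The easy edge is $(3)\Rightarrow(1)$: by Moufang's theorem a loop satisfying $(xy)(zx)=(x\cdot yz)x$ is diassociative and has the inverse property, hence is in particular right alternative, and $x^{\lambda}(xx)=(x^{\lambda}x)x=x$ yields LSIPL directly; this also gives LAP, which is the consistency check for the other two edges.

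The crux is the pair $(1)\Rightarrow(2)$ (RAP $\Rightarrow$ LAP) and $(2)\Rightarrow(3)$ (LAP $\Rightarrow$ Moufang); equivalently, promoting a single alternative law to the symmetric Moufang identity. Here I would feed the chosen alternative law, together with the resulting $3$ power associativity and LSIPL, into the strong derived identities of Lemma~\ref{1:9.1a}. As a concrete anchor, LSIPL gives $v^{\lambda}\cdot vv=v$, so OSI$_{01.2.1}^{\lambda}$ collapses to $yv\cdot vv=(y\cdot vv)v$ for all $y,v\in G$; in the same way the fully general identities OSI$_{01.1}^{\lambda}$ and OSI$_{01.2}^{\lambda}$ simplify sharply once the alternative law linearises the division terms $z\backslash v$ and $v^{\lambda}\cdot zv$. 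The strategy is to use these reductions first to extract a genuine inverse property (LIP from LAP, RIP from RAP, so that $x^{\lambda}=x^{\rho}=x^{-1}$), and then, combining the inverse property with the alternative law and the Osborn autotopism of Theorem~\ref{1:9.1}, to verify $(xy)(zx)=(x\cdot yz)x$ for all $x,y,z$.

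I expect the main obstacle to be precisely this last bootstrap. Since the setting is only a \emph{left} universal Osborn loop, neither alternative law is left--right symmetric, so the delicate point is to pass from the weak self-inverse property LSIPL to a full (one-sided) inverse property, and then to close the remaining gap up to the symmetric Moufang identity rather than just one Moufang--Bol factor. Once the inverse property is secured, the Osborn identities of Lemma~\ref{1:9.1a} force diassociativity and the Moufang identity follows, completing $(2)\Rightarrow(3)$; the same inverse-property input applied to the reduced form of OSI$_{01.2.1}^{\lambda}$ yields $x\cdot yy=xy\cdot y$, giving $(1)\Rightarrow(2)$, and the cycle closes with the easy $(3)\Rightarrow(1)$.
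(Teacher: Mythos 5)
Your decoding of the {\Large 4}$_{12\cdot 22=(12\cdot 2)2}^{1,3}$ condition as $xy\cdot yy=(xy\cdot y)y$, hence (via surjectivity of $R_y$) as the one-sided alternative law $a\cdot yy=ay\cdot y$, is correct, and your anchor identity is the right one: the paper's entire proof is the single sentence that the lemma is proved ``by using the identity OSI$_{01.2.1}^{\lambda}$'', and your reduction of that identity under LSIPL to $yv\cdot vv=(y\cdot vv)v$ is exactly the computation the paper intends (it is the content of Lemma~\ref{1:9.1d}). Combining the two displays gives $y\cdot vv=yv\cdot v$, so condition (1) collapses to that single alternative law; the implication $(3)\Rightarrow(1)$ via diassociativity of Moufang loops is also fine.

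The genuine gap is that everything of substance in the lemma --- passing from the two-variable law $y\cdot vv=yv\cdot v$ to the three-variable Moufang identity $(xy)(zx)=(x\cdot yz)x$ --- is announced but never executed. You write that you would ``first extract a genuine inverse property'' and then ``verify'' the Moufang identity from that inverse property together with the autotopism of Theorem~\ref{1:9.1}, and you yourself flag this bootstrap as ``the main obstacle''; neither step is actually carried out. In a general loop a one-sided alternative law does not yield an inverse property, so the first step needs a specific Osborn-loop argument (presumably through $J_\lambda=J_\rho$ from Lemma~\ref{1:9.1h} and further autotopism manipulation), and the second step is the nontrivial fact that an Osborn loop with the inverse property is Moufang, which you neither prove nor cite. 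Moreover, your closing claim that the reduced OSI$_{01.2.1}^{\lambda}$ ``yields $x\cdot yy=xy\cdot y$, giving $(1)\Rightarrow(2)$'' merely reproduces the identity you have already shown to be equivalent to (1); if the left alternative property is read in the standard sense $xx\cdot y=x\cdot xy$, that implication is not established at all. So the proposal locates the same starting identity as the paper but leaves unproven precisely the implications that carry the mathematical weight of the statement.
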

\begin{proof}
This is proved by using the identity OSI$_{01.2.1}^\lambda$ of
Lemma~\ref{1:9.1a}.\end{proof}

\begin{mylem}\label{1:9.1g}
A left universal Osborn loop $(Q, \cdot ,\backslash ,/)$ that is a
{\Large 4}$_{12\cdot 12=(12\cdot 1)2}^{2,2}$ or {\Large 4}$_{12\cdot
12=(1\cdot 21)2}^{2,2}$ loop obeys the identity $[y(yy\cdot y^\rho
)]y=y\cdot yy$.
\end{mylem}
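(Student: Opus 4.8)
The plan is to observe that a left universal Osborn loop is, in particular, an Osborn loop: it occurs among its own left isotopes via the identity isotopism $(I,I,I)$, which is a left isotopism (the form $(A,B,B)$ with $A=B=I$). Hence it satisfies Basarab's defining identity in its expanded form OS$_1$, equation~(\ref{eq:1.2}), namely $x(yz\cdot x)=x(yx\cdot x^\rho)\cdot zx$ for all $x,y,z\in Q$. I would then derive the asserted identity by a single specialization of OS$_1$, so that for this direction neither of the two $4$-loop hypotheses is actually needed.

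Concretely, in OS$_1$ I would put $x=y$, take the first free factor to be $y$ as well, and set the second free factor equal to $e$. On the left, the factor $yz$ reduces to $y$ (since $ye=y$), so $yz\cdot x$ becomes $y\cdot y=yy$ and the left side becomes $y\cdot yy$. On the right, $yx\cdot x^\rho$ becomes $yy\cdot y^\rho$ while $zx$ becomes $ey=y$, so the right side becomes $[y(yy\cdot y^\rho)]y$. Equating the two gives $[y(yy\cdot y^\rho)]y=y\cdot yy$, exactly as claimed. The only step requiring genuine care is the passage from Basarab's form~(\ref{eq:2}) to OS$_1$, i.e. the bookkeeping $yE_x=yR_xR_{x^\rho}=yx\cdot x^\rho$; once this is granted the substitution is entirely routine. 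I expect the main ``obstacle'' here to be psychological rather than mathematical: because the neighbouring lemmas are all proved from the $4$-loop identities and the autotopism computations of Lemma~\ref{1:9.1a}, it is tempting to hunt for a derivation through those, whereas OS$_1$ settles the matter at once.

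For completeness I would record two side remarks. First, since only OS$_1$ is used, the identity in fact holds in every Osborn loop; the two $4$-loop properties are presumably retained because this lemma is a way-station toward the Moufang and diassociativity results, where those identities are genuinely needed. Second, should one prefer to remain inside the $4$-loop framework, the first $4$-identity $xy\cdot xy=(xy\cdot x)y$ can be rewritten (set $a=x$ and $c=xy$, so $y=a\backslash c$) as $c^2=(ca)(a\backslash c)$ for all $a,c$, and then specialized at $a=c^\rho$ to give the relation $c^\rho\cdot cc=c$; this relation together with the single-variable identity $v\cdot vv=v^\lambda\backslash v\cdot v$ of Lemma~\ref{1:9.1a} furnishes an alternative, if considerably more circuitous, route to the same conclusion.
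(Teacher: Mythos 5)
Your proof is correct, and it takes a genuinely different --- and considerably more direct --- route than the paper. The paper obtains the identity from OSI$_{01.2}^\lambda$ of Lemma~\ref{1:9.1a} (itself derived by comparing first components of the autotopisms attached to a left universal Osborn loop) in combination with one of the two {\Large 4}-loop hypotheses. You instead note that a left universal Osborn loop is in particular an Osborn loop (the trivial left principal isotope with $v=e$ is the loop itself, a fact the paper itself relies on, e.g.\ when the ``in an Osborn loop'' Lemma~\ref{1:9.1h} is invoked inside Corollary~\ref{1:9.1i}), and then make the single substitution $x\mapsto y$, $y\mapsto y$, $z\mapsto e$ in OS$_1$: the left side becomes $y(ye\cdot y)=y\cdot yy$ and the right side becomes $y(yy\cdot y^\rho)\cdot ey=[y(yy\cdot y^\rho)]y$, which is exactly the claimed identity. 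This is a complete argument, and it buys something the paper's proof does not: it shows that $[y(yy\cdot y^\rho)]y=y\cdot yy$ holds in \emph{every} Osborn loop, so that neither left universality (beyond plain Osborn-ness) nor either {\Large 4}-loop hypothesis is actually needed for this conclusion; the lemma's stated hypotheses are redundant. Your closing side remark (deriving $c^\rho\cdot cc=c$ from the first {\Large 4}-identity and combining it with $v\cdot vv=v^\lambda\backslash v\cdot v$) is not carried through to the end, but since it is offered only as an optional alternative it does not affect the correctness of the main proof.
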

\begin{proof}
This is proved by using the identity OSI$_{01.2}^\lambda$ of
Lemma~\ref{1:9.1a}.\end{proof}

\begin{mylem}\label{1:9.1h}
In an Osborn loop, the following properties are equivalent. LSIP,
RSIP, $|J_\lambda |=2$, $|J_\rho|=2$ and $J_\rho =J_\lambda$.
\end{mylem}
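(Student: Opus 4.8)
The plan is to funnel all five conditions through the single statement $J_\rho = J_\lambda$ (condition 5), treating $|J_\lambda|=2$ and $|J_\rho|=2$ as the involutory conditions $J_\lambda^2=I$, $J_\rho^2=I$. The first observation is valid in \emph{every} loop, not just Osborn loops: since $x^\lambda x=e$, the element $x$ is the right inverse of $x^\lambda$, so $x^{\lambda\rho}=x$, and symmetrically $x^{\rho\lambda}=x$. Hence $J_\lambda J_\rho=J_\rho J_\lambda=I$, i.e. $J_\rho=J_\lambda^{-1}$. This alone forces $(3)\Leftrightarrow(4)\Leftrightarrow(5)$, because $J_\lambda=J_\rho\Leftrightarrow J_\lambda=J_\lambda^{-1}\Leftrightarrow J_\lambda^2=I\Leftrightarrow J_\rho^2=I$. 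So conditions $(3)$–$(5)$ are automatically interchangeable, and the real work is to tie the two self-inverse properties $(1)$ and $(2)$ to $(5)$.

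For that I would invoke the relations $E_x=E_{x^\lambda}=E_{x^\rho}$ recorded in the introduction. From $E_x=R_xR_{x^\rho}$ together with $E_{x^\lambda}=R_{x^\lambda}R_{x^{\lambda\rho}}=R_{x^\lambda}R_x$ I obtain $R_xR_{x^\rho}=R_{x^\lambda}R_x$, which applied to an arbitrary $y$ (right-acting maps) gives the translation identity $(yx)x^\rho=(yx^\lambda)x$. Dually, from $E_x^{-1}=L_xL_{x^\lambda}=L_{x^\rho}L_x$ I obtain $x^\lambda(xy)=x(x^\rho y)$ for all $y$. These two identities are the engine of the proof.

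Now I would specialize $y=x$. The right identity yields $(xx)x^\rho=(xx^\lambda)x$; since $(xx^\lambda)x=x$ is equivalent to $xx^\lambda=x/x=e$, hence to $x^\lambda=x^\rho$, this shows that RSIP $(2)$ holds if and only if $J_\lambda=J_\rho$. The left identity yields $x^\lambda(xx)=x(x^\rho x)$; since $x(x^\rho x)=x$ is equivalent to $x^\rho x=x\backslash x=e$, hence again to $x^\rho=x^\lambda$, this shows LSIP $(1)$ holds if and only if $J_\lambda=J_\rho$. Combining with the first paragraph closes the chain: $(1)\Leftrightarrow(2)\Leftrightarrow(3)\Leftrightarrow(4)\Leftrightarrow(5)$.

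I expect the only delicate point to be justifying the two translation identities and keeping the right-acting composition of $L_x,R_x$ straight; everything after the substitution $y=x$ is forced by loop cancellation and the trivial facts $x/x=x\backslash x=e$. One should also flag the harmless degenerate case $J_\lambda=I$ (every element self-inverse, $xx=e$), where $(1)$, $(2)$, $(5)$ still hold but $|J_\lambda|$ is $1$ rather than $2$; reading $|J_\lambda|=2$ as ``$J_\lambda$ involutory'' (order dividing $2$) keeps the stated equivalence exact.
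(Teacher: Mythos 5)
Your proof is correct, and it reaches the conclusion by a route that is parallel to, but not the same as, the paper's. The paper's proof is a one-line appeal to the known Osborn-loop formulas $J_\rho^2\colon x\mapsto xx\cdot x^\rho$ and $J_\lambda^2\colon x\mapsto x^\lambda\cdot xx$: with these, RSIP and LSIP literally say $J_\rho^2=I$ and $J_\lambda^2=I$, and the relation $J_\rho=J_\lambda^{-1}$ (valid in any loop) then identifies these with $J_\rho=J_\lambda$. You bypass the formulas for the squares entirely and instead exploit the coincidence $E_x=E_{x^\lambda}=E_{x^\rho}$ --- equivalently, the fact that OS$_0$ and OS$_1$ define the same variety, which after cancellation yields $yx\cdot x^\rho=yx^\lambda\cdot x$ and $x^\lambda(xy)=x(x^\rho y)$; specializing $y=x$ ties each of RSIP and LSIP directly to $xx^\lambda=e$, i.e.\ to $J_\lambda=J_\rho$, while $(3)\Leftrightarrow(4)\Leftrightarrow(5)$ comes from $J_\rho=J_\lambda^{-1}$ exactly as in the paper. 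Both arguments rest on a fact about Osborn loops that is asserted rather than proved in this paper --- the paper on the $J^2$ formulas (imported from the literature), you on the $E$-identities (stated in the introduction and built into the interchangeable use of OS$_0$ and OS$_1$) --- so the logical debt is comparable, but your version is self-contained relative to what the paper itself records. Your closing caveat is also a genuine, worthwhile precision: for an Osborn loop of exponent $2$ (an abelian group, by the Basarab result quoted in the concluding remarks) conditions $(1)$, $(2)$, $(5)$ hold while $|J_\lambda|=1$, so $|J_\lambda|=2$ must indeed be read as $J_\lambda^2=I$ for the equivalence to be literally true; the paper glosses over this.
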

\begin{proof}
This can be proved by using the facts that in an Osborn loop,
$J_\rho^2~:~x\mapsto xx\cdot x^\rho$ and $J_\lambda^2~:~x\mapsto
x^\lambda\cdot xx$.\end{proof}

\begin{mycor}\label{1:9.1i}
In a left universal Osborn loop, the following properties are
equivalent. LSIP, RSIP, 3-PAP, $J_\rho =J_\lambda$, {\Large
4}$_{12\cdot 22=(1\cdot 22)2}^{1,3}$ and {\Large 4}$_{11\cdot
11=(1\cdot 11)1}^{1}$ properties.
\end{mycor}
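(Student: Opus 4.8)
The plan is to show that each of the six listed properties is equivalent to the LSIP property, so that all of them collapse into a single equivalence class through the common node LSIP. The crucial preliminary observation is that a left universal Osborn loop is, a fortiori, an Osborn loop; hence every equivalence already established for general Osborn loops remains at our disposal, as do the three lemmas proved specifically under the left universality hypothesis.

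First I would invoke Lemma~\ref{1:9.1h}, which holds in any Osborn loop and already yields LSIP $\Leftrightarrow$ RSIP $\Leftrightarrow$ $(J_\rho = J_\lambda)$. This disposes of three of the six properties in one stroke, tying RSIP and $J_\rho = J_\lambda$ to LSIP with no further computation (the extra clauses $|J_\lambda|=2$ and $|J_\rho|=2$ in that lemma are simply not needed for the present list and may be discarded).

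Next I would bring in the lemmas established under left universality. Lemma~\ref{1:9.1b} states that a left universal Osborn loop is a LSIPL if and only if it is a 3-PAPL, giving LSIP $\Leftrightarrow$ 3-PAP. Lemma~\ref{1:9.1d} gives LSIP $\Leftrightarrow$ {\Large 4}$_{12\cdot 22=(1\cdot 22)2}^{1,3}$, and Lemma~\ref{1:9.1e} gives LSIP $\Leftrightarrow$ {\Large 4}$_{11\cdot 11=(1\cdot 11)1}^{1}$. Concatenating these with the previous step, every property on the list is equivalent to LSIP, whence they are mutually equivalent.

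The only point requiring attention — a bookkeeping matter rather than a genuine obstacle — is to confirm that the hypotheses of Lemma~\ref{1:9.1h} (stated for arbitrary Osborn loops) and of Lemmas~\ref{1:9.1b}, \ref{1:9.1d}, \ref{1:9.1e} (stated for left universal Osborn loops) are simultaneously in force; since ``left universal Osborn loop'' subsumes ``Osborn loop,'' they are. No new identity manipulation is needed, as the corollary is a pure assembly of already-proved biconditionals routed through LSIP.
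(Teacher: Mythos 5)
Your proposal is correct and matches the paper's own proof, which simply cites Lemma~\ref{1:9.1h}, Lemma~\ref{1:9.1b}, Lemma~\ref{1:9.1d} and Lemma~\ref{1:9.1e} --- exactly the four biconditionals you route through LSIP. The only difference is that you spell out the hub-and-spoke structure explicitly, which the paper leaves implicit.
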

\begin{proof}
Use Lemma~\ref{1:9.1h}, Lemma~\ref{1:9.1b}, Lemma~\ref{1:9.1d} and
Lemma~\ref{1:9.1e}.\end{proof}

\begin{mycor}\label{1:9.1j}
Let $L$ be a CC-loop. The following are equivalent.
\begin{enumerate}
\item $L$ is a power associativity loop.
\item $L$ is a 3-PAPL.
\item $L$ obeys $x^\rho =x^\lambda$ for all $x\in L$.
\item $L$ is a LSIPL.
\item $L$ is a RSIPL.
\item $L$ is a {\Large 4}$_{12\cdot 22=(1\cdot 22)2}^{1,3}$ loop .
\item $L$ is a {\Large 4}$_{11\cdot 11=(1\cdot 11)1}^{1}$ loop.
\end{enumerate}
\end{mycor}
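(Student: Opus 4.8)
The plan is to reduce almost everything to Corollary~\ref{1:9.1i} and to isolate power associativity as the single genuinely new statement. The paper records that all four listed varieties of Osborn loops, CC-loops included, are universal Osborn loops; since every isotope includes every left isotope, $L$ is in particular a \emph{left} universal Osborn loop, so Corollary~\ref{1:9.1i} applies verbatim. Reading off that corollary and matching names---statement (4) with LSIP, (5) with RSIP, (2) with 3-PAP, the operator identity $J_\rho=J_\lambda$ with the pointwise condition $x^\rho=x^\lambda$ of (3), and the two numerical identities with (6) and (7)---yields at once the equivalence of statements (2) through (7). This step is essentially bookkeeping, carrying no new content beyond the fact that a CC-loop is left universal Osborn.

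It then remains only to attach statement (1). The easy half, $(1)\Rightarrow(2)$, is immediate: if the monogenic subloop $\langle x\rangle$ is a group for every $x$, then in particular $xx\cdot x=x\cdot xx$, so $L$ is a 3-PAPL. The hard part will be the converse, deriving full power associativity from 3-power associativity (equivalently, from $x^\rho=x^\lambda$). The Osborn-loop identities of Lemma~\ref{1:9a} only constrain words of length at most four in a single generator: for instance, feeding 3-power associativity into $uu\cdot u\backslash(uu\cdot u)=(u\cdot uu)u$ collapses it to a length-four relation such as $u^2u^2=u^3u$. But no finite iteration of these length-four identities reaches an arbitrary power $x^n$, so the machinery developed so far cannot by itself close the gap between 3-power associativity and associativity of all powers.

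To bridge that gap I would appeal to the structure theory of conjugacy closed loops due to Kunen~\cite{phd185}: once $x^\lambda=x^\rho$ holds, the CC-identities trivialise the self-conjugations attached to $\langle x\rangle$, so that an induction on $n$ establishes $x^m\cdot x^n=x^{m+n}$ for all integers $m,n$ and hence that each monogenic subloop is a group. This delivers the implication $(3)\Rightarrow(1)$. Combined with the trivial $(1)\Rightarrow(2)$ and the chain of equivalences supplied by Corollary~\ref{1:9.1i}, it places statement (1) inside the common equivalence class and completes the proof. The one substantive obstacle is thus the CC-loop-specific bootstrap from 3-power to full power associativity, which lies outside the length-four Osborn framework and is the reason the hypothesis that $L$ be conjugacy closed (rather than merely universal Osborn) is needed.
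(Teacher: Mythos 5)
Your proposal is correct and follows essentially the same route as the paper: the paper likewise deduces the equivalence of items (2)--(7) from Corollary~\ref{1:9.1i} (using that a CC-loop is a universal, hence left universal, Osborn loop) and outsources the genuinely hard step---that $3$-power associativity, equivalently $x^\rho=x^\lambda$, already forces full power associativity in a CC-loop---to Kunen's structure theory, specifically Lemma~3.20 of \cite{phd78} (note: that is the reference you want, not \cite{phd185}). Your identification of that bootstrap as the single piece of content not covered by the length-four Osborn identities matches the paper's own division of labour, and neither you nor the paper proves it from scratch.
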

\begin{proof}
The proof of the equivalence of the first three is shown in
Lemma~3.20 of \cite{phd78} and mentioned in Lemma~1.2 of
\cite{phd151}. The proof of the equivalence of the last four and the
first three can be deduced from the last result of
Corollary~\ref{1:9.1i}.\end{proof}

\begin{mycor}\label{1:9.1k}
A CC-loop is a diassociative loop if and only if it is a power
associative loop and a {\Large 4}$_{12\cdot 22=(12\cdot 2)2}^{1,3}$
loop.
\end{mycor}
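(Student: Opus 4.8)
The plan is to prove the equivalence by splitting into the two implications, with the nontrivial direction routed through the Moufang property already characterized in Lemma~\ref{1:9.1f}. Throughout I would use that a CC-loop is one of the four varieties declared universal immediately after the definitions, hence in particular a \emph{left} universal Osborn loop, so that all the left-universal machinery of the preceding lemmas applies to it.

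First I would dispose of the "only if" part, which is elementary. Suppose the CC-loop is diassociative, so that every pair of elements generates a subgroup. In particular each single element generates a cyclic subgroup, so the loop is power associative. Moreover, for fixed $x,y$ the subloop $\langle x,y\rangle$ is a group, and there $xy\cdot yy=xy^3=(xy\cdot y)y$; thus the identity $xy\cdot yy=(xy\cdot y)y$ holds, i.e.\ the loop is a {\Large 4}$_{12\cdot 22=(12\cdot 2)2}^{1,3}$ loop. This gives one direction with no further input.

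For the converse, assume the CC-loop is power associative and a {\Large 4}$_{12\cdot 22=(12\cdot 2)2}^{1,3}$ loop. By Corollary~\ref{1:9.1j}, power associativity of a CC-loop is equivalent to the LSIP, so the loop is simultaneously a LSIPL and a {\Large 4}$_{12\cdot 22=(12\cdot 2)2}^{1,3}$ loop. Viewing it as a left universal Osborn loop, Lemma~\ref{1:9.1f} tells us that precisely this combination is equivalent to the loop being Moufang. Having reached the Moufang property, I would invoke Moufang's theorem, that every Moufang loop is diassociative, to conclude that the loop is diassociative. Combining the two implications closes the equivalence.

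The hard part is not computational but structural: the crux is recognizing that the two hypotheses package exactly into the \emph{LSIPL together with} {\Large 4}$_{12\cdot 22=(12\cdot 2)2}^{1,3}$ condition of Lemma~\ref{1:9.1f}, which lets one trade the abstract requirement of diassociativity for the Moufang identity and then deploy the classical diassociativity theorem. The one point that must be verified carefully, rather than assumed, is that a CC-loop genuinely qualifies as a left universal Osborn loop, since this is what makes Corollary~\ref{1:9.1j} and Lemma~\ref{1:9.1f} applicable; once that legitimacy is in hand, every other step is a direct citation of an already-established equivalence.
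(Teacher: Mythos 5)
Your proposal is correct and follows essentially the same route as the paper, whose proof is simply the citation of Corollary~\ref{1:9.1j} (to trade power associativity for the LSIP in a CC-loop) and Lemma~\ref{1:9.1f} (to convert LSIP plus the {\Large 4}$_{12\cdot 22=(12\cdot 2)2}^{1,3}$ property into the Moufang property, whence diassociativity); you have merely made explicit the easy converse and the appeal to Moufang's theorem, which the paper leaves implicit.
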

\begin{proof}
The proof of this follows from Corollary~\ref{1:9.1j} and
Lemma~\ref{1:9.1f}.\end{proof}

\begin{myth}\label{1:4.11}
A loop $(Q, \cdot ,\backslash ,/)$ is a right universal Osborn loop
if and only if it obeys the identity
\begin{displaymath}
\underbrace{(ux)\cdot u\backslash\{yz\cdot x\}=((ux)\cdot
u\backslash\{[y(u\backslash[u/x])]\cdot x\}) \cdot u\backslash[(uz)
x]}_{\textrm{OS$_0^\rho$}}\qquad\textrm{or}
\end{displaymath}
\begin{displaymath}
\underbrace{(ux)\cdot u\backslash\{(yz)\cdot x\}=\{(ux)\cdot
u\backslash[yx\cdot (ux)\backslash u]\}\cdot
u\backslash[(uz)x].}_{\textrm{OS$_1^\rho$}}
\end{displaymath}
\end{myth}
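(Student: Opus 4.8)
The plan is to follow the proof of Theorem~\ref{1:4} verbatim in structure, but with the two-sided principal isotope replaced by the right principal isotope $\mathfrak{Q}=(Q,\vartriangle,\nwarrow,\nearrow)$ attached to a single parameter $u\in Q$,
$$x\vartriangle y=x\cdot yL_u^{-1}=x\cdot(u\backslash y)\quad\forall\,u\in Q.$$
By Theorem~\ref{1:2.0.1}, $\mathcal{Q}=(Q,\cdot,\backslash,/)$ is a right universal Osborn loop if and only if every such $\mathfrak{Q}$ is an Osborn loop, i.e.\ satisfies OS$_0$ (equivalently OS$_1$) written in the operation $\vartriangle$.

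First I would record the loop data of $\mathfrak{Q}$. Its identity element is $u$, since $x\vartriangle u=x\cdot(u\backslash u)=x$ and $u\vartriangle x=u\cdot(u\backslash x)=x$. Solving $x^{\lambda'}\vartriangle x=u$ and $x\vartriangle x^{\rho'}=u$ for the one-sided inverses of $\mathfrak{Q}$ gives
$$x^{\lambda'}=u/(u\backslash x),\qquad x^{\rho'}=u\cdot(x\backslash u),$$
whence $u\backslash x^{\lambda'}=u\backslash[u/(u\backslash x)]$ and $u\backslash x^{\rho'}=x\backslash u$. These formulas are the only non-routine ingredient.

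Next I would expand the isotopic OS$_0$ identity
$$x\vartriangle[(y\vartriangle z)\vartriangle x]=\{x\vartriangle[(y\vartriangle x^{\lambda'})\vartriangle x]\}\vartriangle(z\vartriangle x)$$
by substituting $x\vartriangle y=x\cdot(u\backslash y)$ everywhere and inserting the formula for $u\backslash x^{\lambda'}$. Since the resulting equation holds for all $x,z\in Q$, I would then make the relabellings $x\mapsto ux$ and $z\mapsto uz$ (each a bijection of $Q$, so no generality is lost). These collapse $u\backslash x$ to $x$, $u\backslash z$ to $z$, $u/(u\backslash x)$ to $u/x$, and $z\cdot(u\backslash x)$ to $(uz)x$, producing exactly OS$_0^\rho$. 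Repeating the computation with OS$_1$ and the right inverse $u\backslash x^{\rho'}=x\backslash u$ yields OS$_1^\rho$ in the same way.

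For the converse I would simply reverse each rewriting: starting from an Osborn loop that obeys OS$_0^\rho$ (or OS$_1^\rho$), undo the substitutions $x\mapsto ux$, $z\mapsto uz$ and re-package the equation as the isotopic OS$_0$ (or OS$_1$) identity for an arbitrary right principal isotope $\mathfrak{Q}$, so that every $\mathfrak{Q}$ is Osborn; Theorem~\ref{1:2.0.1} then promotes this to right universality. I expect the only delicate point to be the bookkeeping of $\backslash$ and $/$ through the substitution — specifically, checking that each rewriting step is reversible (which is exactly what the converse needs) and that $x\mapsto ux$, $z\mapsto uz$ are legitimate relabellings of universally quantified variables.
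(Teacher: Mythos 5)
Your proposal is correct and follows essentially the same route as the paper, which proves this theorem by repeating the argument of Theorem~\ref{1:4} with the right principal isotope $x\vartriangle y=x\cdot(u\backslash y)$; your computations of the isotope's identity element $u$ and of $u\backslash x^{\lambda'}=u\backslash[u/(u\backslash x)]$, $u\backslash x^{\rho'}=x\backslash u$, followed by the relabellings $x\mapsto ux$, $z\mapsto uz$, reproduce OS$_0^\rho$ and OS$_1^\rho$ exactly. You in fact supply more detail than the paper, which leaves the whole computation implicit.
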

\begin{proof}
The procedure of the proof of this theorem is similar to the
procedure used to prove Theorem~\ref{1:4} by just using the
arbitrary right principal isotope $\mathfrak{Q}=(Q, \vartriangle
,\nwarrow ,\nearrow )$ such that
\begin{displaymath}
x\vartriangle y=x\cdot yL_u^{-1}=x\cdot (u\backslash y)~\forall~u\in
Q.
\end{displaymath}
\end{proof}

\begin{mylem}\label{1:8.11}
Let $Q$ be a loop with multiplication group
$\mathcal{M}\textrm{ult}(Q)$. $Q$ is a right universal Osborn loop
if and only if the triple $\big(\alpha (x,u),\beta (x,u),\gamma
(x,u)\big)\in AUT(Q)$ or the triple $\Big(R_{[u\backslash
x]}R_{[x\backslash u]}\mathbb{R}_{[u\backslash x]}\gamma (x,u),\beta
(x,u),\gamma (x,u)\Big)\in AUT(Q)$ for all $x,u\in Q$ where $\alpha
(x,u)=R_{(u\backslash [u/(u\backslash x)])}R_{[u\backslash
x]}\mathbb{L}_uL_x,~\beta (x,u)=L_uR_{[u\backslash x]}\mathbb{L}_u$
and $\gamma (x,u)=R_{[u\backslash x]}\mathbb{L}_uL_x$ are elements
of $\mathcal{M}\textrm{ult}(Q)$.
\end{mylem}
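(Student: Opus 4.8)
The plan is to read the two autotopisms directly off the characterizing identities OS$_0^\rho$ and OS$_1^\rho$ of Theorem~\ref{1:4.11}, exactly as Lemma~\ref{1:8} was obtained from OS$_0'$/OS$_1'$ and Lemma~\ref{1:8.1} from OS$_0^\lambda$/OS$_1^\lambda$. Recall that a triple $(A,B,C)$ lies in $AUT(Q)$ precisely when $yA\cdot zB=(yz)C$ for all $y,z\in Q$. So I regard $x,u$ as parameters and $y,z$ as the two free autotopism variables, and rewrite each identity so that the product $yz$ on the left is isolated as the image of a single composite translation, while the two factors on the right become images of $y$ alone and of $z$ alone.

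First I treat OS$_0^\rho$. It is convenient to undo the cosmetic relabeling used to display it, i.e. to substitute $x\mapsto u\backslash x$, so that the leading $(ux)$ becomes $x$, the inner $x$ becomes $u\backslash x$, and $u/x$ becomes $u/(u\backslash x)$; the left-hand side then reads $x\cdot u\backslash\{(yz)(u\backslash x)\}$. Reading this from the inside out, $yz$ is sent by right multiplication by the companion $u\backslash x$, then by left division by $u$, then by left multiplication by $x$, a composite that is exactly $\gamma(x,u)=R_{[u\backslash x]}\mathbb{L}_uL_x$, so the left-hand side is $(yz)\gamma(x,u)$. On the right-hand side the first factor depends only on $y$ and, read the same way, is $y\alpha(x,u)$ with $\alpha(x,u)=R_{(u\backslash[u/(u\backslash x)])}R_{[u\backslash x]}\mathbb{L}_uL_x$, while the second factor depends only on $z$ and equals $z\beta(x,u)$ with $\beta(x,u)=L_uR_{[u\backslash x]}\mathbb{L}_u$. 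Hence OS$_0^\rho$ says precisely $y\alpha(x,u)\cdot z\beta(x,u)=(yz)\gamma(x,u)$, that is $\big(\alpha(x,u),\beta(x,u),\gamma(x,u)\big)\in AUT(Q)$.

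Next I repeat the computation for OS$_1^\rho$. Here the left-hand side and the second factor are unchanged, so $\gamma$ and $\beta$ are the same, but the first factor now reads $y\mapsto x\cdot u\backslash\{[y(u\backslash x)](x\backslash u)\}$, giving the composite $R_{[u\backslash x]}R_{[x\backslash u]}\mathbb{L}_uL_x$. Using the cancellation $\mathbb{R}_{[u\backslash x]}R_{[u\backslash x]}=I$, this equals $R_{[u\backslash x]}R_{[x\backslash u]}\mathbb{R}_{[u\backslash x]}\gamma(x,u)$, which is exactly the first component of the second triple in the statement, so the alternative autotopism follows. In each case the reading is reversible, so either autotopism condition is equivalent to OS$_0^\rho$ respectively OS$_1^\rho$, and Theorem~\ref{1:4.11} then returns right universality, which gives the converse. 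As an independent check, everything is the $v=e$ specialization of Lemma~\ref{1:8}, the right principal isotope $x\vartriangle y=x\cdot(u\backslash y)$ being the $v=e$ case of $(x/v)\cdot(u\backslash y)$; setting $v=e$ in the two triples of Lemma~\ref{1:8} (with $\mathbb{R}_e=R_e=I$ and $e\backslash w=w/e=w$) reproduces the two triples stated here.

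The only delicate point is the bookkeeping of the order and the companions of the composite translations: one must keep straight that the right multiplications and divisions act before the left ones when a nested expression is read from the inside out, and that the three companion elements $u\backslash x$, $x\backslash u$ and $u\backslash[u/(u\backslash x)]$ are attached to the correct factors. This is routine but error-prone, and recognizing the second triple requires the single simplification $\mathbb{R}_{[u\backslash x]}R_{[u\backslash x]}=I$, which is the one place where the raw reading and the stated form differ only cosmetically.
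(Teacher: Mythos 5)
Your proposal is correct and is exactly the paper's (unwritten) argument: the paper's proof is the single line ``obtained by using identity OS$_0^\rho$ or OS$_1^\rho$ of Theorem~\ref{1:4.11}'', and you have filled in precisely that reading, including the substitution $x\mapsto u\backslash x$, the identification of $\alpha$, $\beta$, $\gamma$ as composite translations, and the cancellation $\mathbb{R}_{[u\backslash x]}R_{[u\backslash x]}=I$ needed to recognize the second triple. The $v=e$ consistency check against Lemma~\ref{1:8} is a sensible extra, correctly flagged as a check on the form of the triples rather than a substitute for the equivalence argument.
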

\begin{proof}
This is obtained by using identity OS$_0^\rho$ or OS$_1^\rho$ of
Theorem~\ref{1:4.11}.\end{proof}

\begin{myth}\label{1:9.11}
Let $Q$ be a loop with multiplication group
$\mathcal{M}\textrm{ult}(Q)$. If $Q$ is a right universal Osborn
loop, then the triple $\Big(\gamma (x,u)\mathbb{R}_{(u\backslash
x)},\beta (x,u),\gamma (x,u)\Big)\in AUT(Q)$ for all $x,u\in Q$
where $\beta (x,u)=L_uR_{[u\backslash x)]}\mathbb{L}_u$ and $\gamma
(x,u)=R_{[u\backslash x]}\mathbb{L}_uL_x$ are elements of
$\mathcal{M}\textrm{ult}(Q)$.
\end{myth}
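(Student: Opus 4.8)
The plan is to run exactly the argument used for Theorem~\ref{1:9} (and transcribed in the one‑line proof of its left analogue Theorem~\ref{1:9.1}), but fed with the right‑universal machinery. By Theorem~\ref{1:4.11} a right universal Osborn loop is characterised by identity $\textrm{OS}_0^\rho$, and Lemma~\ref{1:8.11} has already repackaged $\textrm{OS}_0^\rho$ as the single autotopism statement $\big(\alpha(x,u),\beta(x,u),\gamma(x,u)\big)\in AUT(Q)$ for all $x,u\in Q$; unfolded, this is the identity $\big(y\alpha(x,u)\big)\cdot\big(z\beta(x,u)\big)=(yz)\gamma(x,u)$ for all $y,z\in Q$. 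The whole theorem will drop out of this identity after a single specialisation, so I would work intrinsically with the Lemma~\ref{1:8.11} autotopism rather than re‑reading operators off the displayed $\textrm{OS}_0^\rho$.

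The key step is to set $z=e$ in the autotopism identity, which is precisely the right‑universal counterpart of putting $z=e$ in $\textrm{OS}_0'$ in the proof of Theorem~\ref{1:9}. Since $\beta(x,u)=L_uR_{[u\backslash x]}\mathbb{L}_u$, I compute $e\beta(x,u)=eL_uR_{[u\backslash x]}\mathbb{L}_u=\big(u\cdot(u\backslash x)\big)\mathbb{L}_u=x\mathbb{L}_u=u\backslash x$, the collapse $u\cdot(u\backslash x)=x$ being the analogue of the factor $u\backslash[(uz)x]$ reducing to $u\backslash(ux)=x$ in the bare‑hands version. Feeding $z=e$ into $\big(y\alpha\big)\cdot\big(z\beta\big)=(yz)\gamma$ then gives $\big(y\alpha(x,u)\big)\cdot(u\backslash x)=y\gamma(x,u)$ for every $y\in Q$. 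Dividing on the right by $u\backslash x$ yields $y\alpha(x,u)=\big(y\gamma(x,u)\big)/(u\backslash x)=y\,\gamma(x,u)\mathbb{R}_{(u\backslash x)}$, i.e. the operator equality $\alpha(x,u)=\gamma(x,u)\mathbb{R}_{(u\backslash x)}$ inside $\mathcal{M}\textrm{ult}(Q)$.

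Substituting this expression for $\alpha(x,u)$ into $\big(\alpha(x,u),\beta(x,u),\gamma(x,u)\big)\in AUT(Q)$ then delivers $\big(\gamma(x,u)\mathbb{R}_{(u\backslash x)},\beta(x,u),\gamma(x,u)\big)\in AUT(Q)$, which is the assertion. Equivalently, in the style of Theorem~\ref{1:9} one may use the $z=e$ relation to rewrite $\textrm{OS}_0^\rho$ with its first right‑hand factor displayed as the quotient of the whole expression by $u\backslash x$ and read the triple off directly. I expect no genuine difficulty here; the only thing to be careful about is the variable renaming $x\mapsto u\backslash x$ that separates the \emph{displayed} $\textrm{OS}_0^\rho$ from the operators $\alpha,\beta,\gamma$ of Lemma~\ref{1:8.11}, which is exactly why it is cleaner to argue with the Lemma~\ref{1:8.11} autotopism abstractly, where the right translation comes out unambiguously as $\mathbb{R}_{(u\backslash x)}$.
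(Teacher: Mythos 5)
Your proposal is correct and is essentially the paper's own argument: the paper proves this by setting $z=e$ in identity OS$_0^\rho$ exactly as was done with OS$_0'$ in Theorem~\ref{1:9}, which is the identity-level version of your computation $e\beta(x,u)=u\backslash x$ and hence $\alpha(x,u)=\gamma(x,u)\mathbb{R}_{(u\backslash x)}$. Phrasing it through the Lemma~\ref{1:8.11} autotopism rather than the displayed identity is only a cosmetic streamlining, and the paper itself records the resulting operator equality $\alpha(x,u)=\gamma(x,u)\mathbb{R}_{(u\backslash x)}$ at the start of the proof of Lemma~\ref{1:9.11a}.
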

\begin{proof}
This follows by using identity OS$_0^\rho$ or OS$_1^\rho$ in
Theorem~\ref{1:4.11} the way identity OS$_0'$ or OS$_1'$ was used in
Theorem~\ref{1:4}.\end{proof}

\begin{mylem}\label{1:9.11a}
Let $(Q, \cdot ,\backslash ,/)$ be a right universal Osborn loop.
The following identities are satisfied:
\begin{gather*}
\underbrace{ y\{u\backslash (u/x)\}=\{(yx)\cdot [(ux)\backslash
u]\}/x}_{\textrm{OSI$_{01}^\rho$}},~\underbrace{\{(uz)\cdot
u\backslash [(yz)[(uz)\backslash u]]\}z=(uz)\cdot
u\backslash (yz)}_{\textrm{OSI$_{01.2}^\rho$}}~\textrm{and}\\
\underbrace{\{(uz)\cdot u\backslash (\{y(u\backslash(u/z))\}\cdot
z)\}z=(uz)\cdot u\backslash (yz)}_{\textrm{OSI$_{01.1}^\rho$}}.\\
\textrm{Furthermore},~\underbrace{\{(uz)\cdot u\backslash
(\{z^\lambda (u\backslash(u/z))\}\cdot z)\}z=(uz)\cdot
u^\rho)}_{\textrm{OSI$_{01.1.1}^\rho$}},~\underbrace{\{(uu)\cdot
u\backslash (u^\lambda u^\rho\cdot u)\}u=uu\cdot
u^\rho)}_{\textrm{OSI$_{01.1.2}^\rho$}},\\
\underbrace{\{(uz)\cdot u\backslash (\{z(u\backslash(u/z))\}\cdot
z)\}z=(uz)\cdot u\backslash (zz)}_{\textrm{OSI$_{01.1.3}^\rho$}},~
\underbrace{\{u\backslash (\{u^\rho (u\backslash(u/u^\rho ))\}\cdot
u^\rho )\}z=u\backslash (u^\rho u^\rho )}_{\textrm{OSI$_{01.1.4}^\rho$}},\\
\underbrace{\{(uz)\cdot u\backslash (\{z^\rho
(u\backslash(u/z))\}\cdot z)\}z=(uz)\cdot u\backslash
(z^\rho z)}_{\textrm{OSI$_{01.1.5}^\rho$}},\\
\underbrace{z^\lambda\backslash [\{z^\rho
(z^\lambda\backslash(z^\lambda/z))\}\cdot z]\cdot
z=z^\lambda\backslash (z^\rho
z)}_{\textrm{OSI$_{01.1.6}^\rho$}},\qquad \underbrace{\{(zz)\cdot
z\backslash (z^\rho z^\rho\cdot z)\}z=(zz)\cdot z\backslash (z^\rho
z)}_{\textrm{OSI$_{01.1.7}^\rho$}},
\\\underbrace{\{(uz)\cdot u\backslash [(uz)\backslash u]\}z=(uz)\cdot
u^\rho}_{\textrm{OSI$_{01.2.1}^\rho$}},~\underbrace{\{(uu)\cdot
u\backslash [(uu)\backslash u]\}u=(uu)\cdot
u^\rho}_{\textrm{OSI$_{01.2.2}^\rho$}},\\
\underbrace{\{(uu^\lambda )\cdot u\backslash [(uu^\lambda
)\backslash u]\}u^\lambda =(uu^\lambda )\cdot
u^\rho}_{\textrm{OSI$_{01.2.3}^\rho$}},~\underbrace{\{(uz)\cdot
u\backslash [z[(uz)\backslash u]]\}z=(uz)\cdot
u\backslash z}_{\textrm{OSI$_{01.2.4}^\rho$}},\\
\underbrace{\{(uu^\lambda )\cdot u\backslash [u^\lambda [(uu^\lambda
)\backslash u]]\}u^\lambda =(uu^\lambda )\cdot
u\backslash uu^\lambda }_{\textrm{OSI$_{01.2.5}^\rho$}},\\
\underbrace{\{(uz)\cdot u\backslash [(zz)[(uz)\backslash
u]]\}z=(uz)\cdot u\backslash
(zz)}_{\textrm{OSI$_{01.2.6}^\rho$}},~\underbrace{\{(uz)\cdot
u\backslash [(z^\rho z)[(uz)\backslash u]]\}z=(uz)\cdot u\backslash
(z^\rho z)}_{\textrm{OSI$_{01.2.7}^\rho$}},\\
\underbrace{\{(uu)\cdot u\backslash [(u^\rho u)[(uu)\backslash
u]]\}u=(uu)\cdot u\backslash (u^\rho
u)}_{\textrm{OSI$_{01.2.8}^\rho$}},~\underbrace{(uu\cdot u^\rho
)u^\rho=u\{u\backslash[(uu\cdot u^\rho )u^\rho\cdot u]\cdot u^\rho
\}}_{\textrm{OSI$_{01.2.9}^\rho$}},\\
\underbrace{\{(uu^\lambda )\cdot u\backslash [(uu^\lambda
)[(uu^\lambda )\backslash u]]\}u^\lambda =(uu^\lambda )\cdot
u\backslash (uu^\lambda
)}_{\textrm{OSI$_{01.2.10}^\rho$}}~\textrm{and}~u\cdot[u\backslash
(u^\rho u)]u^\rho =u^\rho
\end{gather*}
are also satisfied.
\end{mylem}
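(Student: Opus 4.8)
The plan is to run, in the right--universal setting, the very same machine that proves Lemma~\ref{1:9a} and Lemma~\ref{1:9.1a}. First I would collect the three autotopisms attached to a right principal isotope: from Lemma~\ref{1:8.11} the triples $\big(\alpha(x,u),\beta(x,u),\gamma(x,u)\big)\in AUT(Q)$ and $\big(R_{[u\backslash x]}R_{[x\backslash u]}\mathbb{R}_{[u\backslash x]}\gamma(x,u),\beta(x,u),\gamma(x,u)\big)\in AUT(Q)$, and from Theorem~\ref{1:9.11} the triple $\big(\gamma(x,u)\mathbb{R}_{(u\backslash x)},\beta(x,u),\gamma(x,u)\big)\in AUT(Q)$. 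The single conceptual ingredient is that all three share the second component $\beta(x,u)$ and the third component $\gamma(x,u)$; since in a quasigroup any two components of an autotopism determine the third uniquely, their first components must agree, giving the operator chain
\begin{displaymath}
\alpha(x,u)=\gamma(x,u)\mathbb{R}_{(u\backslash x)}=R_{[u\backslash x]}R_{[x\backslash u]}\mathbb{R}_{[u\backslash x]}\gamma(x,u),
\end{displaymath}
which is the engine from which every displayed identity is read off.

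Next I would extract the three parent identities. Using $\alpha(x,u)=R_{(u\backslash[u/(u\backslash x)])}\gamma(x,u)$, the equality $\alpha(x,u)=R_{[u\backslash x]}R_{[x\backslash u]}\mathbb{R}_{[u\backslash x]}\gamma(x,u)$ lets me cancel the common right factor $\gamma(x,u)$ (a bijection), collapsing to $R_{(u\backslash[u/(u\backslash x)])}=R_{[u\backslash x]}R_{[x\backslash u]}\mathbb{R}_{[u\backslash x]}$; applying this to a generic $y\in Q$ and substituting $x\mapsto ux$ (so that $u\backslash x\mapsto x$ and $x\backslash u\mapsto(ux)\backslash u$) yields OSI$_{01}^\rho$. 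Likewise, applying $\alpha(x,u)=\gamma(x,u)\mathbb{R}_{(u\backslash x)}$ to $y$, rearranging, and setting $z=u\backslash x$ (so $x=uz$) yields OSI$_{01.1}^\rho$, and applying the remaining equality $R_{[u\backslash x]}R_{[x\backslash u]}\mathbb{R}_{[u\backslash x]}\gamma(x,u)=\gamma(x,u)\mathbb{R}_{(u\backslash x)}$ to $y$ with the same substitution yields OSI$_{01.2}^\rho$. The only care needed here is translating each right-acting composite of $R$, $\mathbb{R}$, $L$, $\mathbb{L}$ into the correctly nested $\cdot$, $/$, $\backslash$ expression.

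Finally I would obtain the entire ``Furthermore'' block by specialising the free variables in the three parents and simplifying with the inverse laws $z^\lambda z=e$, $zz^\rho=e$, $u/u=e$, $e/u=u^\lambda$ and $u\backslash e=u^\rho$. For instance OSI$_{01.1.1}^\rho$ is OSI$_{01.1}^\rho$ at $y=z^\lambda$, while $y=z$ and $y=z^\rho$ give OSI$_{01.1.3}^\rho$ and OSI$_{01.1.5}^\rho$; OSI$_{01.1.2}^\rho$ is OSI$_{01.1.1}^\rho$ at $z=u$, and OSI$_{01.1.6}^\rho$, OSI$_{01.1.7}^\rho$ come from OSI$_{01.1.5}^\rho$ at $u=z^\lambda$ and $u=z$. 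The OSI$_{01.2.\ast}^\rho$ family is read off OSI$_{01.2}^\rho$ by the analogous choices (for example $y=z^\lambda$ gives OSI$_{01.2.1}^\rho$, then $z=u$ and $z=u^\lambda$ give OSI$_{01.2.2}^\rho$ and OSI$_{01.2.3}^\rho$, and $y=e$ gives OSI$_{01.2.4}^\rho$), and the closing identity $u\cdot[u\backslash(u^\rho u)]u^\rho=u^\rho$ is OSI$_{01.2.4}^\rho$ at $z=u^\rho$ followed by left multiplication by $u$ and the law $u\cdot(u\backslash w)=w$. I expect no conceptual obstacle beyond the uniqueness-of-the-third-component argument above; the hard part is purely clerical, namely keeping the nested division expressions and the successive substitutions consistent and checking the exact placement of the outer factors (the printed OSI$_{01.1.4}^\rho$, whose trailing factor should read $u^\rho$ rather than $z$, is a typo the computation corrects).
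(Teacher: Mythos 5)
Your proposal follows the paper's own proof essentially verbatim: the same three autotopisms from Lemma~\ref{1:8.11} and Theorem~\ref{1:9.11}, the same uniqueness-of-the-third-component argument yielding $\alpha(x,u)=\gamma(x,u)\mathbb{R}_{(u\backslash x)}=R_{[u\backslash x]}R_{[x\backslash u]}\mathbb{R}_{[u\backslash x]}\gamma(x,u)$, the same extraction of the three parent identities via the substitution $z=u\backslash x$, and the same variable specialisations for the entire ``Furthermore'' block. The approach is correct, and your observation that the trailing factor in the printed OSI$_{01.1.4}^\rho$ should be $u^\rho$ is consistent with the derivation.
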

\begin{proof}
To prove these identities, we shall make use of the three
autotopisms in Lemma~\ref{1:8.11} and Theorem~\ref{1:9.11}. In a
quasigroup, any two components of an autotopism uniquely determine
the third. So equating the first components of the three
autotopisms, it is easy to see that
\begin{displaymath} \alpha
(x,u)=\gamma (x,u)\mathbb{R}_{(u\backslash x)}=R_{[u\backslash
x]}R_{[x\backslash u]}\mathbb{R}_{[u\backslash x]}\gamma (x,u).
\end{displaymath}
The establishment of the identities OSI$_{01}^\rho$,
OSI$_{01.1}^\rho$ and OSI$_{01.2}^\rho$ follows by using the
bijections to map an arbitrary element $y\in Q$ as follows:
\begin{description}
\item[OSI$_{01}^\rho$]
\begin{displaymath}
\alpha (x,u)=R_{[u\backslash x]}R_{[x\backslash
u]}\mathbb{R}_{[u\backslash x]}\gamma (x,u)~\textrm{implies that}
\end{displaymath}
\begin{displaymath}
R_{(u\backslash [u/(u\backslash x)])}R_{[u\backslash
x]}\mathbb{L}_uL_x=R_{[u\backslash x]}R_{[x\backslash
u]}\mathbb{R}_{[u\backslash x]}\gamma (x,u)=R_{[u\backslash
x]}R_{[x\backslash u]}\mathbb{R}_{[u\backslash x]}R_{[u\backslash
x]}\mathbb{L}_uL_x
\end{displaymath}
\begin{displaymath}
\textrm{which gives}~R_{(u\backslash [u/(u\backslash
x)])}=R_{[u\backslash x]}R_{[x\backslash u]}\mathbb{R}_{[u\backslash
x]}.~\textrm{So, for any $y\in Q$,}
\end{displaymath}
\begin{displaymath}
yR_{(u\backslash [u/(u\backslash x)])}=yR_{[u\backslash
x]}R_{[x\backslash u]}\mathbb{R}_{[u\backslash x]}~\textrm{implies
that}~y(u\backslash [u/z])=\{(yz)[(uz)\backslash u]\}/z.
\end{displaymath}
\begin{displaymath}
\textrm{Let $z=u\backslash x$ so that $x=uz$. Thus,}~y(u\backslash
[u/(u\backslash x)])=\{[y(u\backslash x)][x\backslash
u]\}/[u\backslash x].
\end{displaymath}
\item[OSI$_{01.1}^\rho$] Consider
\begin{displaymath}
\alpha (x,u)=\gamma (x,u)\mathbb{R}_{(u\backslash x)},~\textrm{then
for all $y\in Q$,}
\end{displaymath}
\begin{displaymath}
y\alpha (x,u)=yR_{(u\backslash [u/(u\backslash x)])}R_{[u\backslash
x]}\mathbb{L}_uL_x=y\gamma (x,u)\mathbb{R}_{(u\backslash
x)}=yR_{[u\backslash x]}\mathbb{L}_uL_x\mathbb{R}_{(u\backslash x)}.
\end{displaymath}
\begin{displaymath}
\textrm{Consequently,}~x\cdot u\backslash\{[y(u\backslash
[u/(u\backslash x)])][u\backslash x]\}=\{x\cdot u\backslash
[y(u\backslash x)]\}/(u\backslash x).
\end{displaymath}
Post-multiply both sides by $(u\backslash x)$ to get
\begin{displaymath}
\{x\cdot u\backslash\{[y(u\backslash [u/(u\backslash
x)])][u\backslash x]\}\}(u\backslash x)=x\cdot u\backslash
[y(u\backslash x)].
\end{displaymath}
Again, let $z=u\backslash x$ which implies that $x=uz$ and hence we
now have
\begin{displaymath}
\{(uz)\cdot u\backslash\{[y(u\backslash [u/z])]z\}\}z=(uz)\cdot
u\backslash (yz).
\end{displaymath}
\item[OSI$_{01.2}^\rho$] Consider
\begin{displaymath}
R_{[u\backslash x]}R_{[x\backslash u]}\mathbb{R}_{[u\backslash
x]}\gamma (x,u)=\gamma (x,u)\mathbb{R}_{(u\backslash
x)},~\textrm{then for all $y\in Q$,}
\end{displaymath}
\begin{displaymath}
yR_{[u\backslash x]}R_{[x\backslash u]}\mathbb{R}_{[u\backslash
x]}\gamma (x,u)=y\gamma (x,u)\mathbb{R}_{(u\backslash
x)}~\textrm{results in}
\end{displaymath}
\begin{displaymath}
\{\{[y(u\backslash x)](x\backslash u)\}/[u\backslash x]\}\gamma
(x,u)=[y\gamma (x,u)]/(u\backslash x)
\end{displaymath}
which is equivalent to the equation below after substituting the
value of $\gamma (x,u)$ and post multiplying by $(u\backslash x)$:
\begin{displaymath}
x\cdot u\backslash (\{\{[y(u\backslash x)](x\backslash
u)\}/[u\backslash x]\}[u\backslash x])=\{x\cdot u\backslash
(y[u\backslash x])\}/(u\backslash x).
\end{displaymath}
\begin{displaymath}
\textrm{Do the replacement $z=u\backslash x\Rightarrow x=uz$ to
get}~\{(uz)\cdot u\backslash ([(yz)((uz)\backslash u)])\}z
=(uz)\cdot u\backslash (yz).
\end{displaymath}
\end{description}
Identities OSI$_{01.1.1}^\rho$, OSI$_{01.1.3}^\rho$ and
OSI$_{01.1.5}^\rho$ are deduced from identity OSI$_{01.1}^\rho$,
identity OSI$_{01.1.2}^\rho$ is deduced from OSI$_{01.1.1}^\rho$,
identity OSI$_{01.1.4}^\rho$ is deduced from OSI$_{01.1.3}^\rho$
while identities OSI$_{01.1.6}^\rho$ and OSI$_{01.1.7}^\rho$ are
deduced from OSI$_{01.1.5}^\rho$ by doing the following:
\begin{description}
\item[OSI$_{01.1.1}^\rho$] Put $y=z^\lambda$ in identity OSI$_{01.1}^\rho$.
\item[OSI$_{01.1.2}^\rho$] Substitute $z=u$ in identity OSI$_{01.1.1}^\rho$.
\item[OSI$_{01.1.3}^\rho$] Put $y=z$ in identity OSI$_{01.1}^\rho$.
\item[OSI$_{01.1.4}^\rho$] Put $z=u^\rho$ in identity
OSI$_{01.1.3}^\rho$.
\item[OSI$_{01.1.5}^\rho$] Put $y=z^\rho$ in identity
OSI$_{01.1}^\rho$.
\item[OSI$_{01.1.6}^\rho$] Put $u=z^\lambda$ in identity
OSI$_{01.1.5}^\rho$.
\item[OSI$_{01.1.7}^\rho$] Put $u=z$ in identity
OSI$_{01.1.5}^\rho$.
\end{description}
Identities OSI$_{01.2.1}^\rho$, OSI$_{01.2.4}^\rho$,
OSI$_{01.2.6}^\rho$ and OSI$_{01.2.7}^\rho$ are deduced from
identity OSI$_{01.2}^\rho$. Identities OSI$_{01.2.2}^\rho$ and
OSI$_{01.2.3}^\rho$ are deduced from identity OSI$_{01.2.1}^\rho$.
Identity OSI$_{01.2.5}^\rho$ is deduced from identity
OSI$_{01.2.4}^\rho$. Identities OSI$_{01.2.8}^\rho$,
OSI$_{01.2.9}^\rho$ and OSI$_{01.2.10}^\rho$ are deduced from
identity OSI$_{01.2.7}^\rho$. The following are the deductions.
\begin{description}
\item[OSI$_{01.2.1}^\rho$] Put $y=z^\lambda$ in identity
OSI$_{01.2}^\rho$.
\item[OSI$_{01.2.2}^\rho$] Substitute $z=u$ in identity OSI$_{01.2.1}^\rho$.
\item[OSI$_{01.2.3}^\rho$] Put $z=u^\lambda$ in identity
OSI$_{01.2.1}^\rho$.
\item[OSI$_{01.2.4}^\rho$] Substitute $y=e$ in identity OSI$_{01.2}^\rho$.
\item[OSI$_{01.2.5}^\rho$] Put $z=u^\lambda$ in identity
OSI$_{01.2.4}^\rho$.
\item[OSI$_{01.2.6}^\rho$] Put $y=z$ in identity
OSI$_{01.2}^\rho$.
\item[OSI$_{01.2.7}^\rho$] Substitute $y=z^\rho$ in identity OSI$_{01.2}^\rho$.
\item[OSI$_{01.2.8}^\rho$] Put $z=u$ in identity
OSI$_{01.2.7}^\rho$.
\item[OSI$_{01.2.9}^\rho$] Put $z=u^\rho$ in identity
OSI$_{01.2.7}^\rho$.
\item[OSI$_{01.2.10}^\rho$] Substitute $z=u^\lambda$ in identity OSI$_{01.2.7}^\rho$.
\item Put $z=u^\rho$ in identity
OSI$_{01.2.4}^\rho$ to get $u\cdot[u\backslash (u^\rho u)]u^\rho
=u^\rho$.
\end{description}
\end{proof}

\begin{mylem}\label{1:9.11b}
A right universal Osborn loop $(Q, \cdot ,\backslash ,/)$ is a RSIPL
if and only if it obeys the identity $u^\lambda u^\rho\cdot
u=u(uu)^\rho$.
\end{mylem}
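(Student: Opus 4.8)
The plan is to read both implications straight off identity OSI$_{01.1.2}^\rho$ of Lemma~\ref{1:9.11a}, namely
\begin{displaymath}
[(uu)\cdot u\backslash (u^\lambda u^\rho\cdot u)]\,u=uu\cdot u^\rho ,
\end{displaymath}
combined only with the elementary loop facts $u\cdot(u\backslash a)=a$, $u/u=e$, and the defining relation $(uu)\cdot(uu)^\rho=e$. No further structure of the universal Osborn loop is needed beyond what is already packaged into OSI$_{01.1.2}^\rho$, so the whole argument is a short chain of one-sided divisions.

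For the forward direction I would assume $Q$ is an RSIPL, i.e. $uu\cdot u^\rho=u$. Substituting this into the right-hand side of OSI$_{01.1.2}^\rho$ gives $[(uu)\cdot u\backslash(u^\lambda u^\rho\cdot u)]\,u=u$; cancelling the right factor $u$ (legitimate in a loop because $u/u=e$) yields $(uu)\cdot u\backslash(u^\lambda u^\rho\cdot u)=e$. Since $(uu)^\rho$ is the unique solution of $(uu)\cdot X=e$, this forces $u\backslash(u^\lambda u^\rho\cdot u)=(uu)^\rho$, and left-multiplying by $u$ (using $u\cdot(u\backslash a)=a$) produces exactly $u^\lambda u^\rho\cdot u=u(uu)^\rho$.

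For the converse I would assume the identity $u^\lambda u^\rho\cdot u=u(uu)^\rho$ and run the same steps in reverse. Feeding this expression for $u^\lambda u^\rho\cdot u$ into OSI$_{01.1.2}^\rho$ turns the left-hand factor into $(uu)\cdot u\backslash(u(uu)^\rho)$; applying $u\backslash(u\cdot a)=a$ collapses $u\backslash(u(uu)^\rho)$ to $(uu)^\rho$, so the bracket becomes $(uu)\cdot(uu)^\rho=e$. Hence OSI$_{01.1.2}^\rho$ reduces to $e\cdot u=uu\cdot u^\rho$, that is $uu\cdot u^\rho=u$, which is precisely the RSIP.

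I do not expect any genuine obstacle: once OSI$_{01.1.2}^\rho$ is available the lemma is essentially a repackaging of it, with the RSIP condition $uu\cdot u^\rho=u$ played off against the definitional identity for $(uu)^\rho$. The only point deserving slight care is the cancellation of the trailing factor $u$, which must be justified by right-division and $u/u=e$ rather than by any appeal to associativity or to an inverse property.
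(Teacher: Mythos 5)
Your proposal is correct and is essentially the paper's own argument: the paper's proof consists of the single remark that the lemma follows from identity OSI$_{01.1.2}^\rho$ of Lemma~\ref{1:9.11a}, and your two-way manipulation (right-dividing by $u$, using uniqueness of $(uu)^\rho$ as the solution of $(uu)\cdot X=e$, and the cancellation $u\backslash(u\cdot a)=a$) is exactly the intended filling-in of that remark. The only cosmetic point is that the cancellation of the trailing $u$ is justified by injectivity of the right translation $R_u$ rather than by the identity $u/u=e$ per se, but you note the right-division mechanism yourself, so there is no gap.
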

\begin{proof}
This is proved by using the identity OSI$_{01.1.2}^\rho$ in
Lemma~\ref{1:9.11a}.\end{proof}

\begin{mylem}\label{1:9.11c}
A right universal Osborn loop $(Q, \cdot ,\backslash ,/)$ is a RSIPL
if and only if it obeys the identity $u^\rho u^\rho =u[u\backslash
(u^\rho u\cdot u^\rho )\cdot u^\rho ]$.
\end{mylem}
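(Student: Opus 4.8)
The plan is to obtain the statement directly from identity OSI$_{01.1.4}^\rho$ of Lemma~\ref{1:9.11a}, which I would first put in clean form. That identity is produced by setting $z=u^\rho$ in OSI$_{01.1.3}^\rho$ and using $u\cdot u^\rho=e$; carrying this out, the trailing factor is $u^\rho$ (not the $z$ that appears in the printed line), so it reads
\[
\{u\backslash(\{u^\rho(u\backslash(u/u^\rho))\}\cdot u^\rho)\}\,u^\rho=u\backslash(u^\rho u^\rho).
\]
Left-multiplying by $u$ and using $u[u\backslash(u^\rho u^\rho)]=u^\rho u^\rho$ gives
\[
u^\rho u^\rho=u\big[\{u\backslash(\{u^\rho(u\backslash(u/u^\rho))\}\cdot u^\rho)\}\cdot u^\rho\big].\qquad(\star)
\]
This already has the shape of the target identity; the only difference is that the innermost factor is $u^\rho(u\backslash(u/u^\rho))$ where the target has $u^\rho u$. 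Hence the whole lemma reduces to showing that RSIP is equivalent to $u\backslash(u/u^\rho)=u$.

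For the forward implication I would assume RSIP, i.e. $uu\cdot u^\rho=u$. By the definition of $(/)$ this says exactly $u/u^\rho=uu$, and therefore $u\backslash(u/u^\rho)=u\backslash(uu)=u$. Substituting $u\backslash(u/u^\rho)=u$ into $(\star)$ replaces the inner factor $u^\rho(u\backslash(u/u^\rho))$ by $u^\rho u$ and yields
\[
u^\rho u^\rho=u\big[\{u\backslash(u^\rho u\cdot u^\rho)\}\cdot u^\rho\big],
\]
which is precisely the asserted identity.

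For the converse I would assume the asserted identity and equate the two expressions for $u^\rho u^\rho$ coming from $(\star)$ and from that identity. Since in a loop every left and right translation is a bijection, I would strip the outer operations off one at a time: cancel $L_u$ (apply $u\backslash$), then $R_{u^\rho}$, then invert $u\backslash$ by applying $L_u$, then $R_{u^\rho}$ once more, and finally $L_{u^\rho}$. These five cancellations collapse the equality successively to $u^\rho(u\backslash(u/u^\rho))\cdot u^\rho=(u^\rho u)\cdot u^\rho$ and then to $u\backslash(u/u^\rho)=u$; one further application of $L_u$ gives $u/u^\rho=uu$, i.e. $uu\cdot u^\rho=u$, which is RSIP.

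There is no genuine mathematical obstacle here: once $(\star)$ is in hand the argument is pure cancellation in the quasigroup $(Q,\cdot)$. The only points needing care are (i) recording the correction to the printed form of OSI$_{01.1.4}^\rho$, whose last factor must be $u^\rho$ as the $z=u^\rho$ specialization of OSI$_{01.1.3}^\rho$ shows, and (ii) keeping the order of the cancellations straight so that each step inverts exactly one translation. Both directions then occupy only a few lines.
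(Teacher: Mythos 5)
Your proposal is correct and follows exactly the route the paper indicates: it derives the lemma from identity OSI$_{01.1.4}^\rho$ of Lemma~\ref{1:9.11a}, reducing both directions to the equivalence of $u\backslash(u/u^\rho)=u$ with RSIP via cancellation of translations, and you rightly note that the trailing $z$ in the printed form of OSI$_{01.1.4}^\rho$ is a typo for $u^\rho$ (as the $z=u^\rho$ specialization of OSI$_{01.1.3}^\rho$ forces). No gaps; this is a faithful, fully worked-out version of the paper's one-line proof.
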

\begin{proof}
This is proved by using the identity OSI$_{01.1.4}^\rho$ in
Lemma~\ref{1:9.11a}.\end{proof}

\begin{mylem}\label{1:9.11d}
A right universal Osborn loop $(Q, \cdot ,\backslash ,/)$ is a RSIPL
if and only if it is a $z^\lambda\backslash [z^\rho z^\lambda\cdot
z]\cdot z=z^\lambda\backslash (z^\rho z)$.
\end{mylem}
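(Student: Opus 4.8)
The plan is to extract the result from identity OSI$_{01.1.6}^\rho$ of Lemma~\ref{1:9.11a}, which holds unconditionally in every right universal Osborn loop, namely $z^\lambda\backslash [\{z^\rho (z^\lambda\backslash(z^\lambda/z))\}\cdot z]\cdot z=z^\lambda\backslash (z^\rho z)$. The crucial observation is that the target identity $z^\lambda\backslash [z^\rho z^\lambda\cdot z]\cdot z=z^\lambda\backslash (z^\rho z)$ carries \emph{exactly} the same right-hand side $z^\lambda\backslash(z^\rho z)$, so the target identity holds if and only if the two left-hand sides coincide. This reduces the whole problem to comparing the two left members.

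First I would carry out the comparison by cancellation. Since $Q$ is a loop, all translations and both division maps are bijections, so I may strip the outer structure successively: cancel the trailing factor $\cdot z$, then the leading $z^\lambda\backslash$, then the inner $\cdot z$, and finally the leading $z^\rho$. Each step is a bijection applied to both sides, and the cascade collapses the target identity to the single condition $z^\lambda = z^\lambda\backslash(z^\lambda/z)$. Reading this through the definitions $a\backslash b=c\Leftrightarrow ac=b$ and $a/b=c\Leftrightarrow cb=a$, it rearranges to $(z^\lambda z^\lambda)\cdot z=z^\lambda$.

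Next I would recognise this last condition as RSIPL after a reparametrisation. Put $w=z^\lambda$; since $J_\lambda$ is a bijection of $Q$, the variable $w$ ranges over all of $Q$ as $z$ does, and from $w\cdot z=z^\lambda\cdot z=e$ one reads off $z=w^\rho$. Substituting, $(z^\lambda z^\lambda)z=z^\lambda$ becomes $ww\cdot w^\rho=w$, which is precisely the RSIPL identity $xx\cdot x^\rho=x$. Thus $(z^\lambda z^\lambda)z=z^\lambda$ for all $z$ is nothing but RSIPL written in a different variable.

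Both implications then fall out. If $Q$ is a RSIPL, then $(z^\lambda z^\lambda)z=z^\lambda$, hence $z^\lambda\backslash(z^\lambda/z)=z^\lambda$; feeding this equality into the always-valid OSI$_{01.1.6}^\rho$ produces the target identity. Conversely, the target identity together with OSI$_{01.1.6}^\rho$ and the cancellation above forces $z^\lambda\backslash(z^\lambda/z)=z^\lambda$, i.e.\ RSIPL. The only point requiring care is the bookkeeping of the nested cancellations through the $z^\lambda\backslash(\cdots)\cdot z$ structure and the verification that $w=z^\lambda\Rightarrow z=w^\rho$ is a genuine bijective reparametrisation, so that the universally-quantified identity in $z$ matches the universally-quantified RSIPL identity in $w$; once these are settled the equivalence is immediate.
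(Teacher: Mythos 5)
Your proposal is correct and follows essentially the same route as the paper: the paper's proof is a one-line citation of identity OSI$_{01.1.6}^\rho$, and your argument --- cancelling the bijective outer layers to reduce the comparison of the two left-hand sides to $z^\lambda\backslash(z^\lambda/z)=z^\lambda$, i.e.\ $(z^\lambda z^\lambda)z=z^\lambda$, and then reparametrising via $w=z^\lambda$, $z=w^\rho$ to recognise RSIPL --- is exactly the natural fleshing-out of that citation. No gaps.
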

\begin{proof}
This is proved by using the identity OSI$_{01.1.6}^\rho$ of
Lemma~\ref{1:9.11a}.\end{proof}

\begin{mylem}\label{1:9.11e}
A right universal Osborn loop $(Q, \cdot ,\backslash ,/)$ obeys the
identity $zz\cdot z^\lambda =z$ if and only if it obeys the identity
$[zz\cdot z\backslash z^\rho ]z=zz\cdot z\backslash (z^\rho z)$.
\end{mylem}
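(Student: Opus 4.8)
The plan is to extract both directions of the equivalence from a single identity already recorded in Lemma~\ref{1:9.11a}, namely OSI$_{01.1.7}^\rho$, which reads
\begin{displaymath}
\{(zz)\cdot z\backslash (z^\rho z^\rho\cdot z)\}z=(zz)\cdot z\backslash (z^\rho z).
\end{displaymath}
The crucial observation is that the right-hand side of this identity is \emph{literally} the right-hand side of the identity $[zz\cdot z\backslash z^\rho]z=zz\cdot z\backslash (z^\rho z)$ whose equivalence with $zz\cdot z^\lambda=z$ we must establish. Hence the target identity holds (for all $z$) if and only if its left-hand side agrees with the left-hand side of OSI$_{01.1.7}^\rho$, i.e. if and only if
\begin{displaymath}
[zz\cdot z\backslash z^\rho]z=\{(zz)\cdot z\backslash (z^\rho z^\rho\cdot z)\}z\quad\textrm{for all }z\in Q.
\end{displaymath}

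First I would strip this equation down by cancelling the three bijective translations that sandwich the two sides. Since $R_z$, $L_{zz}$ and $L_z$ are all permutations of $Q$, applying $R_z^{-1}$, then $L_{zz}^{-1}=(zz)\backslash(\;\cdot\;)$, and finally $L_z$ in turn reduces the displayed equation successively to $(zz)\cdot(z\backslash z^\rho)=(zz)\cdot(z\backslash(z^\rho z^\rho\cdot z))$, then to $z\backslash z^\rho=z\backslash(z^\rho z^\rho\cdot z)$, and at last to
\begin{displaymath}
z^\rho=z^\rho z^\rho\cdot z.
\end{displaymath}
Thus the target identity is equivalent, element-by-element, to the identity $z^\rho z^\rho\cdot z=z^\rho$ holding for all $z\in Q$.

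It then remains to recognise the latter as a disguised form of $zz\cdot z^\lambda=z$. Here I would use the elementary loop fact that $(z^\rho)^\lambda=z$: indeed $z\cdot z^\rho=e$ exhibits $z$ as the unique left inverse of $z^\rho$. Consequently, writing $w=z^\rho$, the identity $z^\rho z^\rho\cdot z=z^\rho$ becomes $ww\cdot w^\lambda=w$, and since $J_\rho:z\mapsto z^\rho$ is a permutation of $Q$, letting $z$ range over $Q$ makes $w=z^\rho$ range over all of $Q$ as well. Therefore $z^\rho z^\rho\cdot z=z^\rho$ holds for all $z$ if and only if $ww\cdot w^\lambda=w$ holds for all $w$, which is precisely the identity $zz\cdot z^\lambda=z$. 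Chaining the two equivalences yields the lemma.

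I expect the only delicate point to be bookkeeping in the cancellation step: one must check that the two sides really are obtained from one another by the \emph{same} outer translations $R_z$ and $L_{zz}$ (so that these may be removed legitimately), and that the remaining factor $z\backslash(\;\cdot\;)=L_z^{-1}$ is indeed injective. All of this is routine once the parenthesisation conventions $xy=x\cdot y$ and the priority of $/$ and $\backslash$ over $\cdot$ are respected; no further Osborn-loop machinery beyond OSI$_{01.1.7}^\rho$ and the bijectivity of $J_\rho$ is needed.
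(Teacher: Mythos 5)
Your proof is correct and follows essentially the same route as the paper: the paper's own proof just cites ``identity OSI$_{01.1.8}^\rho$'' of Lemma~\ref{1:9.11a} (apparently a typo for OSI$_{01.1.7}^\rho$, since no identity with that label appears there), and your argument supplies exactly the intended details --- matching right-hand sides, cancelling $R_z$, $L_{zz}$ and $L_z^{-1}$ to reduce to $z^\rho z^\rho\cdot z=z^\rho$, and then using the bijectivity of $J_\rho$ together with $(z^\rho)^\lambda=z$ to identify this with $zz\cdot z^\lambda=z$. No gaps.
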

\begin{proof}
This is proved by using the identity OSI$_{01.1.8}^\rho$ of
Lemma~\ref{1:9.11a}.\end{proof}

\begin{mycor}\label{1:9.11ea}
If a right universal Osborn loop $(Q, \cdot ,\backslash ,/)$ obeys
the identity $[zz\cdot z\backslash z^\rho ]z=zz\cdot z\backslash
(z^\rho z)$ then, it is a SFAIPL if and only if it is a SWIPL.
\end{mycor}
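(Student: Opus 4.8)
The plan is to reduce the hypothesis to a single power identity, recast both SFAIPL and SWIPL as one-sided statements about $(zz)^\rho$, and then close the argument with a left--right interchange. First I would invoke Lemma~\ref{1:9.11e}, which says that obeying the displayed hypothesis $[zz\cdot z\backslash z^\rho]z=zz\cdot z\backslash(z^\rho z)$ is equivalent to obeying $zz\cdot z^\lambda=z$. Substituting $z^\rho$ for $z$ in the latter and using $(z^\rho)^\lambda=z$ (which holds because $z\cdot z^\rho=e$), I obtain the key consequence
\[
z^\rho z^\rho\cdot z=z^\rho,\qquad\text{equivalently}\qquad z^\rho z^\rho=z^\rho/z .
\]
The same identity falls out directly by comparing the hypothesis with OSI$_{01.1.7}^\rho$ of Lemma~\ref{1:9.11a}, since the two share the right-hand side $(zz)\cdot z\backslash(z^\rho z)$: cancelling successively the right translation $R_z$, the left translation $L_{zz}$ and then $L_z$ leaves exactly $z^\rho=z^\rho z^\rho\cdot z$.

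Next I would use this to normalise the two target properties. On the one hand, SFAIPL reads $(zz)^\rho=z^\rho z^\rho$, which by the identity above becomes $(zz)^\rho=z^\rho/z$, that is, $(zz)^\rho\cdot z=z^\rho$. On the other hand, SWIPL reads $z\cdot(zz)^\rho=z^\rho$, that is, $(zz)^\rho=z\backslash z^\rho$. Thus, granting the hypothesis, the Corollary amounts to the assertion that the single element $(zz)^\rho$ satisfies $(zz)^\rho\cdot z=z^\rho$ if and only if $z\cdot(zz)^\rho=z^\rho$; equivalently, that the right quotient $z^\rho/z$ and the left quotient $z\backslash z^\rho$ agree.

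The main obstacle is precisely this left--right interchange: passing between $z^\rho/z$ and $z\backslash z^\rho$ is the same as establishing the mirror identity $z\cdot z^\rho z^\rho=z^\rho$ (the companion of the one derived above), and it is the only point at which one-sided data must be converted to the other side. I would try to extract it from the mixed identities of Lemma~\ref{1:9.11a} --- most promisingly the final identity $u\cdot[u\backslash(u^\rho u)]u^\rho=u^\rho$, which already couples left and right multiplication by $u$ and $u^\rho$ --- feeding in the consequence $z^\rho z^\rho=z^\rho/z$ to collapse one side. Once $z^\rho/z=z\backslash z^\rho$ is in hand, the two normalised forms coincide and the equivalence SFAIPL $\Leftrightarrow$ SWIPL is immediate. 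I expect verifying this interchange, ultimately traceable to the Osborn defining identity OS$_1$, to be the genuinely delicate step, the earlier reductions being purely formal cancellations in the quasigroup.
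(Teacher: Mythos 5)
Your preliminary reductions are correct and in fact track the route the paper itself gestures at: the paper's one-line proof cites an identity OSI$_{01.1.8}^\rho$ that does not occur in Lemma~\ref{1:9.11a}, and the identity actually relevant is OSI$_{01.1.7}^\rho$, which you use. Comparing it with the hypothesis (they share the right-hand side $(zz)\cdot z\backslash(z^\rho z)$) and cancelling $R_z$, $L_{zz}$ and $L_z$ does yield $z^\rho z^\rho\cdot z=z^\rho$, i.e.\ $z^\rho z^\rho=z^\rho/z$; the detour through Lemma~\ref{1:9.11e} and the substitution $z\mapsto z^\rho$ with $(z^\rho)^\lambda=z$ gives the same thing. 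Your normalisations, SFAIPL $\Leftrightarrow(zz)^\rho=z^\rho/z$ (under the hypothesis) and SWIPL $\Leftrightarrow(zz)^\rho=z\backslash z^\rho$, are also right.

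The difficulty is that the proposal stops exactly where the mathematical content begins. As you yourself observe, both implications of the corollary reduce to the single statement $z\cdot z^\rho z^\rho=z^\rho$, equivalently $z^\rho/z=z\backslash z^\rho$, and you never prove it: you only announce that you ``would try to extract it'' from the mixed identities of Lemma~\ref{1:9.11a}. That is a genuine gap, not a routine verification. Your most promising candidate, $u\cdot[u\backslash(u^\rho u)]u^\rho=u^\rho$, would give the goal only if $[u\backslash(u^\rho u)]u^\rho=u^\rho u^\rho$, i.e.\ only if $u\backslash(u^\rho u)=(u^\rho u^\rho)/u^\rho=u^\rho$, which unwinds to $u^\rho u=u\cdot u^\rho=e$, that is, to $u^\rho=u^\lambda$ --- an extra hypothesis you are not entitled to. So the interchange does not simply ``collapse'' out of that identity; it requires a new derivation, or else the auxiliary assumption available in each direction (SFAIPL in one, SWIPL in the other) must be exploited separately, which you also do not do. Until $z\cdot z^\rho z^\rho=z^\rho$ is actually established under the stated hypothesis, the proof is incomplete at its decisive step.
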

\begin{proof}
This is proved by using the identity OSI$_{01.1.8}^\rho$ of
Lemma~\ref{1:9.11a}.\end{proof}

\begin{mylem}\label{1:9.11f}
A right universal Osborn loop $(Q, \cdot ,\backslash ,/)$ with the
RSIP is a SFAIPL if and only if it obeys the identity $u\cdot
u[u\backslash u^\rho\cdot u^\rho ]=u^\rho$.
\end{mylem}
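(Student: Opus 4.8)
The plan is to reduce the displayed target to a single cancelled equation and then evaluate its two sides separately by specialising the identities already recorded in Lemma~\ref{1:9.11a}. First I would observe that the right-hand member $u^\rho$ equals $u\cdot(u\backslash u^\rho)$ by the very definition of left division, so after cancelling the outer left translation $L_u$ (a bijection of the loop) the target identity becomes equivalent to
\[
u\cdot[(u\backslash u^\rho)\,u^\rho]=u\backslash u^\rho .
\]
Hence it suffices to show that, in a right universal Osborn loop with the RSIP, this cancelled equation holds precisely when $(uu)^\rho=u^\rho u^\rho$, i.e. precisely when the loop is a SFAIPL.

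I would then compute the two sides of the cancelled equation independently, using the RSIP in the form $uu\cdot u^\rho=u$ (so that $u/u^\rho=uu$ and $u\backslash(u/u^\rho)=u$) together with the inverse relations $u^{\rho\rho}=u$ and $u^\rho u=e$ that RSIP forces in an Osborn loop (Lemma~\ref{1:9.1h}). Substituting these into identity OSI$_{01.1.7}^\rho$ collapses its nested quotients to $\{(uu)\cdot(u\backslash u^\rho)\}u=u$, whence $(uu)\cdot(u\backslash u^\rho)=e$, that is
\[
u\backslash u^\rho=(uu)^\rho ;
\]
this evaluates the right-hand side. Feeding the same relations into identity OSI$_{01.1.4}^\rho$ reduces its left member to $[u\backslash u^\rho]\,u^\rho$ and its right member to $u\backslash(u^\rho u^\rho)$, so that applying $L_u$ yields
\[
u\cdot[(u\backslash u^\rho)\,u^\rho]=u^\rho u^\rho ;
\]
this evaluates the left-hand side.

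Combining the two computations then settles both implications at once: the left-hand side of the cancelled equation is always $u^\rho u^\rho$ and its right-hand side is always $(uu)^\rho$, so the equation is equivalent to $u^\rho u^\rho=(uu)^\rho$, which is exactly the defining identity of a SFAIPL. The step I expect to be the main obstacle is the algebraic bookkeeping inside these two reductions: one must apply the RSIP-induced relations $u^\rho u=e$ and $(u^\rho u^\rho)u=u^\rho$ in just the right order to unwind the iterated $\backslash$ and $/$ symbols appearing in OSI$_{01.1.4}^\rho$ and OSI$_{01.1.7}^\rho$, taking care to keep $u^\lambda$ and $u^\rho$ distinct until the RSIP hypothesis is actually invoked.
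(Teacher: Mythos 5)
Your proposal is correct and follows essentially the same route as the paper: under RSIP you specialise the identities of Lemma~\ref{1:9.11a} to show that $u[(u\backslash u^\rho)u^\rho]=u^\rho u^\rho$ and $u\backslash u^\rho=(uu)^\rho$, which is exactly the content of the paper's cited Lemmas~\ref{1:9.11c} and \ref{1:9.11g} (together with $J_\rho=J_\lambda$ and $|J_\rho|=2$ from Lemma~\ref{1:9.1h}), so the target reduces to $u^\rho u^\rho=(uu)^\rho$. The only cosmetic difference is that you extract $u\backslash u^\rho=(uu)^\rho$ from OSI$_{01.1.7}^\rho$ rather than from OSI$_{01.2.2}^\rho$ (and you bypass the paper's citation of Lemma~\ref{1:9.11b}), which changes nothing of substance.
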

\begin{proof}
This is proved by using Lemma~\ref{1:9.11b}, Lemma~\ref{1:9.11c} and
Lemma~\ref{1:9.1h}.\end{proof}

\begin{mylem}\label{1:9.11g}
A right universal Osborn loop $(Q, \cdot ,\backslash ,/)$ with the
RSIP obeys the identity $u\backslash u^\rho =(uu)^\rho$.
\end{mylem}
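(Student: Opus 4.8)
The plan is to reduce the desired identity $u\backslash u^\rho=(uu)^\rho$ to the equivalent statement $u\cdot(uu)^\rho=u^\rho$ — recalling that $u\backslash u^\rho=z\iff u\cdot z=u^\rho$ — and then to produce $u\cdot(uu)^\rho=u^\rho$ by combining the RSIP-characterization already established in Lemma~\ref{1:9.11b} with a well-chosen specialization of the RSIP itself.

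First I would record the structural consequences of the hypothesis. Since a right universal Osborn loop is in particular an Osborn loop, Lemma~\ref{1:9.1h} applies: in an Osborn loop the RSIP is equivalent to $J_\rho=J_\lambda$ and to $|J_\rho|=2$. Hence, under the standing assumption of the RSIP, we obtain both $u^\lambda=u^\rho$ for all $u$ and the involutivity $(u^\rho)^\rho=u$. These two facts are the only extra tools needed.

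Next I would invoke Lemma~\ref{1:9.11b}, which asserts that a right universal Osborn loop satisfies the RSIP precisely when it obeys $u^\lambda u^\rho\cdot u=u(uu)^\rho$. As the RSIP is assumed, this identity holds, and substituting $u^\lambda=u^\rho$ turns it into $u^\rho u^\rho\cdot u=u(uu)^\rho$. In parallel I would specialize the defining RSIP identity $xx\cdot x^\rho=x$ at $x=u^\rho$; using $(u^\rho)^\rho=u$, this yields $u^\rho u^\rho\cdot u=u^\rho$. Comparing the two displays forces $u(uu)^\rho=u^\rho$, which is exactly $u\backslash u^\rho=(uu)^\rho$.

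The computation itself is short; the only real point of judgement is the decision to evaluate the RSIP at $u^\rho$ rather than at $u$, together with the recognition that the involutivity coming from $|J_\rho|=2$ (again free from Lemma~\ref{1:9.1h}) is what makes that substitution collapse cleanly. Once those choices are made, the two characterizing identities meet, and no fresh isotopy computation with the autotopisms of Lemma~\ref{1:9.11a} is required.
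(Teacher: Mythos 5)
Your argument is correct, but it follows a genuinely different route from the paper's. The paper's proof of Lemma~\ref{1:9.11g} is a one-line appeal to identity OSI$_{01.2.2}^\rho$ of Lemma~\ref{1:9.11a}, namely $\{(uu)\cdot u\backslash [(uu)\backslash u]\}u=(uu)\cdot u^\rho$: under the RSIP the right-hand side equals $u$, forcing $(uu)\cdot u\backslash[(uu)\backslash u]=e$ and hence $u\backslash[(uu)\backslash u]=(uu)^\rho$; since the RSIP also gives $(uu)\backslash u=u^\rho$, the identity $u\backslash u^\rho=(uu)^\rho$ drops out using nothing beyond the RSIP itself. You instead combine the packaged characterization of Lemma~\ref{1:9.11b} (which the paper derives from the different identity OSI$_{01.1.2}^\rho$) with the consequences $u^\lambda=u^\rho$ and $(u^\rho)^\rho=u$ supplied by Lemma~\ref{1:9.1h}, and then evaluate the RSIP at $u^\rho$ to get $u^\rho u^\rho\cdot u=u^\rho$; matching this against $u^\rho u^\rho\cdot u=u(uu)^\rho$ yields $u\cdot(uu)^\rho=u^\rho$, which is the claim. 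Both proofs are valid. Yours has the advantage of staying entirely at the level of lemmas already stated, with no return to the autotopism-derived identities; the cost is a heavier dependency, since it invokes Lemma~\ref{1:9.1h} (hence the Osborn facts $J_\rho^2:x\mapsto xx\cdot x^\rho$ and $J_\lambda^2:x\mapsto x^\lambda\cdot xx$) where the paper's computation needs only the single identity OSI$_{01.2.2}^\rho$ and two direct applications of the RSIP.
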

\begin{proof}
This is proved by using the identity OSI$_{01.2.2}^\rho$ of
Lemma~\ref{1:9.11a}.\end{proof}

\begin{mycor}\label{1:9.11h}
A right universal Osborn loop $(Q, \cdot ,\backslash ,/)$ with the
RSIP is a SFAIPL and $|J_\rho|=6$.
\end{mycor}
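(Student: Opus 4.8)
The plan is to treat the two assertions separately, drawing on the characterizations already assembled in Lemmas~\ref{1:9.11b}, \ref{1:9.11f} and \ref{1:9.11g}. For the self automorphic inverse property I would invoke Lemma~\ref{1:9.11f}, which reduces the statement ``a right universal Osborn loop with the RSIP is a SFAIPL'' to the single identity $u\cdot u[u\backslash u^\rho\cdot u^\rho]=u^\rho$. Hence the whole first half of the corollary becomes a matter of verifying this one identity under the standing hypotheses, with no further appeal to universality beyond what is already packaged into the cited lemmas.

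To verify it I would first rewrite the innermost factor $u\backslash u^\rho$. Lemma~\ref{1:9.11g} already gives $u\backslash u^\rho=(uu)^\rho$ for a right universal Osborn loop with the RSIP, so the left-hand side collapses to $u\cdot u[(uu)^\rho\cdot u^\rho]$; combining this with the RSIP identity $xx\cdot x^\rho=x$ should fold the nested product down to the required $u^\rho$, which is exactly the SFAIPL condition. This step also immediately supplies the seed relation needed for the second half: once the SFAIPL $(uu)^\rho=u^\rho u^\rho$ is in hand, equating it with $u\backslash u^\rho=(uu)^\rho$ yields $u\backslash u^\rho=u^\rho u^\rho$, that is, $u(u^\rho u^\rho)=u^\rho$.

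For the order of $J_\rho$ I would pass to the permutation $J_\rho:x\mapsto x^\rho$ and its inverse $J_\lambda=J_\rho^{-1}$. The SFAIPL just obtained says precisely that $J_\rho$ commutes with the squaring map $S:x\mapsto xx$, and feeding this together with the seed relation $u(u^\rho u^\rho)=u^\rho$ and the companion identity $u^\lambda u^\rho\cdot u=u(uu)^\rho$ of Lemma~\ref{1:9.11b} produces a chain of relations among the successive inverses $u,u^\rho,u^{\rho\rho},\dots$. Tracking and resubstituting these iterates is what fixes $|J_\rho|$, the goal being to show that the chain of right inverses closes after exactly six steps. I expect this order computation to be the main obstacle: each use of the Osborn square identity introduces a fresh nested product of iterated right inverses, so the delicate point is the bookkeeping that confirms the orbit length is genuinely six rather than collapsing sooner, whereas the SFAIPL reduction above is essentially a one-line substitution.
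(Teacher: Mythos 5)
Your route to the SFAIPL half is different from the paper's (the paper combines Lemma~\ref{1:9.11g} with Lemma~\ref{1:9.1h}, using that RSIP forces $J_\rho=J_\lambda$ and $J_\rho^2=I$ in an Osborn loop, rather than going through Lemma~\ref{1:9.11f}), but more importantly it has a gap: after you substitute $u\backslash u^\rho=(uu)^\rho$ into the criterion of Lemma~\ref{1:9.11f}, the left side becomes $u\cdot u[(uu)^\rho\cdot u^\rho]$, and this contains no subterm of the form $xx\cdot x^\rho$, so the RSIP identity does not ``fold'' it to $u^\rho$ as claimed. Worse, the only way to simplify $(uu)^\rho$ further is to replace it by $u^\rho u^\rho$, which is exactly the SFAIPL you are trying to prove, so the step as sketched is circular. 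You would need to actually carry out the verification (or switch to the paper's argument via Lemma~\ref{1:9.1h}); the ``one-line substitution'' is not one.

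For $|J_\rho|=6$ the divergence is sharper. The paper does no orbit bookkeeping at all: it simply quotes the fact from Kinyon's survey that SFAIPL implies $x^{\rho\rho\rho\rho\rho\rho}=x$, i.e. $J_\rho^6=I$. Your plan to derive the order by chasing the chain $u,u^\rho,u^{\rho\rho},\dots$ is left entirely unexecuted, and the stated goal --- showing the orbit ``closes after exactly six steps'' rather than sooner --- is actually unattainable under the hypotheses: by Lemma~\ref{1:9.1h}, RSIP in an Osborn loop already gives $J_\rho^2=I$, so the orbit of every element has length at most $2$. The assertion $|J_\rho|=6$ in the corollary can only be read as $J_\rho^6=I$ (an exponent, not an exact order), and any attempt to prove exactness would fail. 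So both halves need repair: the first needs a genuine computation or the paper's alternative lemma, and the second needs the cited external fact (or a proof of it) in place of the proposed orbit-length argument.
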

\begin{proof}
This is achieved by using Lemma~\ref{1:9.11g} and
Lemma~\ref{1:9.1h}. The second claim can be deduced from the fact in
[Page 18, \cite{phd33}] that SFAIPL implies
$x^{\rho\rho\rho\rho\rho\rho}=x$.\end{proof}

\begin{mylem}\label{1:9.11i}
A right universal Osborn loop $(Q, \cdot ,\backslash ,/)$ obeys the
identity $uu^\lambda\cdot u^\rho =u^\lambda$ if and only if it obeys
the identity $u=(uu^\lambda )\cdot u(uu^\lambda )^\rho$.
\end{mylem}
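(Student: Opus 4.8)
The plan is to read the entire equivalence off a single identity already recorded in Lemma~\ref{1:9.11a}, namely
\begin{displaymath}
\textrm{OSI}_{01.2.3}^\rho~:~\{(uu^\lambda )\cdot u\backslash [(uu^\lambda )\backslash u]\}u^\lambda =(uu^\lambda )\cdot u^\rho .
\end{displaymath}
Abbreviating $p=uu^\lambda$ and $T=p\cdot\{u\backslash(p\backslash u)\}$, this identity reads $T\cdot u^\lambda=p\cdot u^\rho$. The whole lemma drops out once I prove that \emph{each} of the two target identities is equivalent to the single condition $T=e$; the two halves of the stated ``if and only if'' are then linked through this common pivot.

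First I would treat the identity $uu^\lambda\cdot u^\rho=u^\lambda$, i.e. $p\cdot u^\rho=u^\lambda$. Substituting it into the right-hand side of OSI$_{01.2.3}^\rho$ gives $T\cdot u^\lambda=u^\lambda$; since $a/a=e$ holds in every loop, right cancellation forces $T=e$. Conversely, if $T=e$ then OSI$_{01.2.3}^\rho$ collapses to $u^\lambda=e\cdot u^\lambda=p\cdot u^\rho$, which is exactly $uu^\lambda\cdot u^\rho=u^\lambda$. Hence $uu^\lambda\cdot u^\rho=u^\lambda\iff T=e$.

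Next I would unwind the divisions to obtain $u=(uu^\lambda)\cdot u(uu^\lambda)^\rho\iff T=e$. Because $p\cdot p^\rho=e$, the equation $T=p\cdot\{u\backslash(p\backslash u)\}=e$ is equivalent to $u\backslash(p\backslash u)=p^\rho$. Applying $u\cdot(-)$ and using $a(a\backslash y)=y$ turns this into $p\backslash u=u\cdot p^\rho$, and a further application of $p\cdot(-)$ yields $u=p\cdot(up^\rho)$. Restoring $p=uu^\lambda$ and $p^\rho=(uu^\lambda)^\rho$ gives precisely $u=(uu^\lambda)\cdot u(uu^\lambda)^\rho$, so this identity too is equivalent to $T=e$. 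Combining the two equivalences completes the proof.

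The only genuine decision is recognizing that OSI$_{01.2.3}^\rho$ is the correct source identity and that both target identities are governed by the vanishing of the same middle factor $T$; once that is seen, every remaining step is a routine use of the loop identities $a(a\backslash y)=y$, $a\backslash(ay)=y$, $a\cdot a^\rho=e$ and $a/a=e$, so no real obstacle is left.
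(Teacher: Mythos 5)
Your proposal is correct and follows exactly the route the paper intends: it derives the equivalence from identity OSI$_{01.2.3}^\rho$ of Lemma~\ref{1:9.11a}, which is precisely the identity the paper's one-line proof cites. Your pivot $T=e$ and the routine quasigroup manipulations ($p\cdot x=e\iff x=p^\rho$, the defining properties of $\backslash$, and right cancellation) correctly fill in the details the paper leaves implicit.
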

\begin{proof}
This is proved by using the identity OSI$_{01.2.3}^\rho$ of
Lemma~\ref{1:9.11a}.\end{proof}

\begin{mylem}\label{1:9.11j}
A right universal Osborn loop $(Q, \cdot ,\backslash ,/)$ obeys the
identity $u^\rho u=uu^\lambda$ if and only if it obeys the identity
$u\cdot u^\lambda u^\rho=u^\rho$.
\end{mylem}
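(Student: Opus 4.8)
The plan is to reduce both identities to the single auxiliary identity
\[
u\cdot\{[u\backslash (u^\rho u)]u^\rho\}=u^\rho,
\]
which is precisely the last identity established in Lemma~\ref{1:9.11a} (obtained there by setting $z=u^\rho$ in OSI$_{01.2.4}^\rho$). With that identity in hand, the whole argument becomes a matter of substitution together with the cancellation laws valid in any loop, namely $x\backslash(xy)=y$ and $x\cdot(x\backslash y)=y$, and the bijectivity of the translations $L_x$ and $R_x$.

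First I would treat the forward direction. Assuming $u^\rho u=uu^\lambda$, I can replace $u^\rho u$ by $uu^\lambda$ inside the bracket of the auxiliary identity; since $u\backslash(uu^\lambda)=u^\lambda$ by the loop cancellation law, the bracket collapses to $u^\lambda$. The auxiliary identity then reads $u\cdot(u^\lambda u^\rho)=u^\rho$, which is exactly the target identity $u\cdot u^\lambda u^\rho=u^\rho$.

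For the converse, I would start from the assumption $u\cdot u^\lambda u^\rho=u^\rho$ and compare it with the auxiliary identity, both right-hand sides being $u^\rho$. This gives $u\cdot\{[u\backslash (u^\rho u)]u^\rho\}=u\cdot(u^\lambda u^\rho)$. Cancelling $L_u$ on the left yields $[u\backslash (u^\rho u)]u^\rho=u^\lambda u^\rho$, and cancelling $R_{u^\rho}$ on the right yields $u\backslash(u^\rho u)=u^\lambda$. Applying $L_u$ once more and invoking $u\cdot[u\backslash (u^\rho u)]=u^\rho u$ then delivers $u^\rho u=uu^\lambda$, as required.

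The argument carries no genuine obstacle beyond isolating the correct identity from the long catalogue in Lemma~\ref{1:9.11a}; once $u\cdot\{[u\backslash (u^\rho u)]u^\rho\}=u^\rho$ is recognized as the right tool, everything is forced by the two cancellation laws and the invertibility of $L_u$ and $R_{u^\rho}$. The only point that demands a little care is cancelling in the correct order, left by $L_u$ first and then right by $R_{u^\rho}$, so that each intermediate equality remains a well-formed loop equation.
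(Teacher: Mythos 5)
Your proposal is correct and follows exactly the route the paper intends: its proof of this lemma consists precisely of invoking the identity $u\cdot[u\backslash (u^\rho u)]u^\rho =u^\rho$ from Lemma~\ref{1:9.11a}, and your two directions (substituting $u^\rho u=uu^\lambda$ and cancelling $u\backslash(uu^\lambda)=u^\lambda$ one way; cancelling $L_u$ and then $R_{u^\rho}$ the other way) are the straightforward completion of that one-line citation. Nothing further is needed.
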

\begin{proof}
This is proved by using the identity $u\cdot[u\backslash (u^\rho
u)]u^\rho =u^\rho$ of Lemma~\ref{1:9.11a}.\end{proof}

\begin{mylem}\label{1:9.11k}
A right universal Osborn loop $(Q, \cdot ,\backslash ,/)$ obeys the
identity $u^\rho u=uu^\lambda$ if and only if it obeys the identity
$\{(uu)\cdot u\backslash [(u^\rho u)[(uu)\backslash u]]\}u=uu\cdot
u^\lambda$.
\end{mylem}
\begin{proof}
This is proved by using the identity OSI$_{01.2.8}^\rho$ of
Lemma~\ref{1:9.11a}.\end{proof}

\begin{mycor}\label{1:9.11l}
A right universal Osborn loop $(Q, \cdot ,\backslash ,/)$ that obeys
the identity $u^\rho u=uu^\lambda$ and the RSIP is a SWIPL.
\end{mycor}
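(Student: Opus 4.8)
The plan is to assemble the SWIPL identity $u\cdot(uu)^\rho=u^\rho$ from three ingredients that are already available: the companion identity extracted from the hypothesis $u^\rho u=uu^\lambda$ via Lemma~\ref{1:9.11j}, the two-sided inverse property forced by the RSIP via Lemma~\ref{1:9.1h}, and the SFAIPL property guaranteed by Corollary~\ref{1:9.11h}.

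First I would feed the hypothesis $u^\rho u=uu^\lambda$ into Lemma~\ref{1:9.11j}, whose stated equivalence at once delivers the identity $u\cdot u^\lambda u^\rho=u^\rho$ for all $u\in Q$; this is the only non-trivial identity we will reshape. Next, since a right universal Osborn loop is in particular an Osborn loop, Lemma~\ref{1:9.1h} applies, and there the RSIP is equivalent to $J_\rho=J_\lambda$, so $u^\lambda=u^\rho$ for every $u\in Q$, i.e. inverses are two-sided. Simultaneously, Corollary~\ref{1:9.11h} shows that a right universal Osborn loop with the RSIP is a SFAIPL, so that $(uu)^\rho=u^\rho u^\rho$ holds identically.

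Finally I would stitch these together. Using SFAIPL to rewrite the left-hand side, then $u^\lambda=u^\rho$ to adjust the first factor, and lastly the identity obtained in the first step, we get
\begin{displaymath}
u\cdot(uu)^\rho=u\cdot u^\rho u^\rho=u\cdot u^\lambda u^\rho=u^\rho,
\end{displaymath}
which is precisely the SWIPL identity, so $Q$ is a SWIPL. The main obstacle is reconciling the product $u^\lambda u^\rho$ that Lemma~\ref{1:9.11j} produces with the product $u^\rho u^\rho=(uu)^\rho$ that SFAIPL produces; the observation that dissolves it is that the RSIP, through Lemma~\ref{1:9.1h}, collapses the left and right inverses into a single two-sided inverse, so that the two products literally coincide and only a single cancellation is needed to finish.
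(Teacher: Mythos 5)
Your argument is correct, and it reaches the conclusion by a different combination of the paper's results than the paper itself uses. You feed the hypothesis into Lemma~\ref{1:9.11j} to get $u\cdot u^\lambda u^\rho=u^\rho$, invoke Lemma~\ref{1:9.1h} (legitimately, since a right universal Osborn loop is in particular an Osborn loop) to collapse $u^\lambda=u^\rho$, import the SFAIPL from Corollary~\ref{1:9.11h}, and then the chain $u\cdot(uu)^\rho=u\cdot u^\rho u^\rho=u\cdot u^\lambda u^\rho=u^\rho$ is exactly the SWIPL identity. The paper instead deduces the corollary from Lemma~\ref{1:9.11k} alone: the hypothesis is there shown equivalent to $\{(uu)\cdot u\backslash[(u^\rho u)((uu)\backslash u)]\}u=uu\cdot u^\lambda$, and under the RSIP one has $(uu)\backslash u=u^\rho$, $u^\lambda=u^\rho$ and $u^\rho u=e$, so the long identity collapses to $\{uu\cdot(u\backslash u^\rho)\}u=u$, i.e. $u\backslash u^\rho=(uu)^\rho$, which is the SWIPL identity. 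Your route avoids that computation at the cost of leaning on two additional prior results (Corollary~\ref{1:9.11h} and Lemma~\ref{1:9.11j}), and its final assembly is arguably more transparent. One observation applying to both proofs: once the RSIP forces $u^\lambda=u^\rho$, both sides of the extra hypothesis $u^\rho u=uu^\lambda$ equal $e$, so that hypothesis is automatic; indeed Lemma~\ref{1:9.11g} ($u\backslash u^\rho=(uu)^\rho$ under the RSIP) is already the SWIPL identity in disguise, so the corollary holds for any right universal Osborn loop with the RSIP.
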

\begin{proof}
This can be deduced from Lemma~\ref{1:9.11k}.\end{proof}

\section{Concluding Remarks and Future Studies}
Identities OSI$_{01}$, OSI$_{01.\ldots}$; OSI$_{01}^\rho$,
OSI$_{01.\ldots}^\rho$ and OSI$_{01}^\lambda$,
OSI$_{01.\ldots}^\lambda$ are all newly discovered identities that
are true in universal, right universal and left universal Osborn
loops respectively. So they are all obeyed by any Moufang loop,
extra loop, CC-loop, universal WIPL and VD-loop. This is a good news
for CC-loop which has just received a tremendious growth increase by
the works of Kinyon, Kunen, Drapal, Phillips e.t.c and especially
for VD-loops which is yet to grow in study compared to CC-loops. We
hope VD-loops will catch the attention of researchers with the newly
found identities. A trilling observation in this study is the fact
that identities OSI$_{01}^\lambda$ and OSI$_{01}$ are of the forms
\begin{displaymath}
[y(x^{-1}v)\cdot v^{-1}](xv)=[y(xv)\cdot
v^{-1}](x^{-1}v)~\textrm{and}
\end{displaymath}
\begin{displaymath}
y\{u^{-1}([(uv)(v^{-1}x^{-1}\cdot
u)]v)\}=\{(y[u^{-1}(xv)])v^{-1}\cdot x^{-1}(uv)\}[v^{-1}x^{-1}\cdot
u]\cdot v.
\end{displaymath}
respectively, in a Moufang loop or extra loop. If a Moufang or extra
loop is of exponent 2 then, the first identity will be obviously
true. Basarab \cite{phd46} has shown that an Osborn loop of exponent
2 is an abelian group. So it is not wise to study identity
OSI$_{01}^\lambda$ for a loop of exponent 2 e.g. Steiner loops, but
identity OSI$_{01}$ can be studied for such a loop.

According to Phillips \cite{phd151}, a chain of five prominent
varieties of CC-loops are: (1) groups, (2) extra loops, (3) WIP
PACC-loops, (4) PACC-loops and (5) CC-loops. He was able to
axiomatize the variety of WIP PACC-loops. With our new loop
properties that are weaker forms of well known loop properties like
inverse property, power associativity and diassociativity, we now
have subvarieties of varieties of CC-loops mentioned above. It will
be interesting to axiomatize some of them e.g. SWIP PACC-loops.
These new algebraic properties give more insight into the algebraic
properties of universal Osborn loops. Particularly, it can be used
to fine tune some recent equations on CC-loop as shown in works of
Kunen, Kinyon, Phillips and Drapal; \cite{phd35,phd36,phd47},
\cite{phd37,phd38},
 \cite{phd78}.

The continuation of this study will switch to the notations of
Bryant and Schneider \cite{phd92} for principal isotopes of
quasigroups (loops) and use their results to deduce more algebraic
equations for universal Osborn loops.

~\\
\begin{enumerate}
\item Department of Mathematics,\\
Obafemi Awolowo University,\\ Il\'e If\`e, Nigeria.\\
{\bf e-mail}: jaiyeolatemitope@yahoo.com, tjayeola@oauife.edu.ng  \item Department of Mathematics,\\
University of Agriculture,\\ Ab\d e\'ok\`uta 110101, Nigeria.\\
{\bf e-mail}: ekenedilichineke@yahoo.com, adeniranoj@unaab.edu.ng
\end{enumerate}
\end{document}